\documentclass[twoside, 11pt]{article}

\textwidth 16cm
\textheight 22.5cm
\oddsidemargin 0cm
\evensidemargin 0cm
\topmargin -1.5cm

\usepackage{enumerate}
\usepackage{mathrsfs}
\usepackage{amsmath, color}
\usepackage{latexsym}
\usepackage{amssymb}
\usepackage{amsthm}
\usepackage{amsfonts}
\usepackage{dsfont}
\usepackage{fancyhdr}
\usepackage{hyperref}
\usepackage{xcolor,cancel}
\usepackage[switch,columnwise]{lineno}
\usepackage{lipsum}

\pagestyle{fancy}
\thispagestyle{empty}
\lhead{S. Lou}
\rhead{Discrete approximation to BMVD}

\newtheorem{dfn}{Definition}[section]
\newtheorem{thm}[dfn]{Theorem}
\newtheorem{rmk}[dfn]{Remark}

\newtheorem{prop}[dfn]{Proposition}

\newtheorem{lem}[dfn]{Lemma}
\theoremstyle{definition}

\numberwithin{equation}{section}

\newcommand{\IE}{{\mathbb{E}}}
\newcommand{\IP}{{\mathbb{P}}}
\newcommand{\IR}{{\mathbb{R}}}
\newcommand{\FF}{{\mathcal{F}}}
\newcommand{\EE}{{\mathcal{E}}}

\newcommand{\IZ}{{\mathbb{Z}}}

\def\eps{\varepsilon}
\def\wh{\widehat}
\def\wt{\widetilde}
\def\<{\langle}
\def\>{\rangle}

\title{\bf  Discrete Approximation to Brownian Motion with Varying Dimension in Bounded Domains}

\date{\today}
\author{{\bf Shuwen Lou}}

\begin{document}

\maketitle

\begin{abstract}
In this paper we study a discrete approximation to Brownian motion with varying dimension (BMVD in abbreviation) introduced in \cite{CL}  by continuous time random walks on square lattices.  The state space of BMVD  contains a $2$-dimensional component, a $3$-dimensional component, and a ``darning point" which joins these two components.  Such a state space is  equipped with the geodesic distance, under which BMVD is a diffusion process.  In this paper, we  prove that  BMVD restricted on a bounded domain containing the darning point is the weak limit of continuous time reversible random walks with exponential holding times. 

\end{abstract}

\medskip
\noindent
{\bf AMS 2010 Mathematics Subject Classification}: Primary 60J60, 60J35; Secondary 31C25, 60H30, 60J45

\smallskip\noindent
{\bf Keywords and phrases}: Space of varying dimension, Brownian motion, random walk,  Skorokhod space, Dirichlet form.

\section{Introduction}\label{Intro}

Brownian motion on spaces with varying dimension has been introduced and studied in details in \cite{CL}.   Such a process is an interesting example of Brownian motion on non-smooth spaces  and can be characterized nicely via Dirichlet form. The state space of BMVD  looks like a plane with a vertical half line installed on it. Roughly speaking, it is ``embedded" in the following space:
\begin{equation*}
\IR^2 \cup \IR_+ =\{(x_1, x_2, x_3)\in \IR^3: x_3=0 \textrm{ or } x_1=x_2=0 \hbox{ and } x_3>0\}.
\end{equation*}
Here $\IR_+:=\{x\in \IR:   x>0\}$  is the set of positive real numbers.  As has been noted in \cite{CL}, Brownian motion cannot be defined on such a state space in the usual sense because a two-dimensional Brownian motion does not hit a singleton. Hence to construct BMVD,  in \cite{CL}, a closed disc on $\IR^2$ is ``shorted" to a singleton. In other words, the media offers zero resistance on this closed disc, so that  the process travels across it at infinite velocity. The resulting Brownian motion hits the shorted disc in finite time with probability one. Then we install an infinite pole at this ``shorted" disc.  

To be more precise, the state space of BMVD on $E$ is defined as follows.
Fix $\eps>0$ and denote by  $B_\eps $  the closed disk on $\IR^2$ centered at $(0,0)$ with radius $\eps $. Let 
${D_\eps}:=\IR^2\setminus  B_\eps $. By identifying $B_\eps $ with a singleton denoted by $a^*$, we  introduce a topological space $E:={D_\eps}\cup \{a^*\}\cup \IR_+$, with the origin of $\IR_+$ identified with $a^*$ and
a neighborhood of $a^*$ defined as $\{a^*\}\cup \left(V_1\cap \IR_+ \right)\cup \left(V_2\cap {D_\eps}\right)$ for some neighborhood $V_1$ of $0$ in $\IR^1$ and $V_2$ of $B_\eps $ in $\IR^2$. Let $m$ be the measure on $E$ whose restriction on $\IR_+$ or ${D_\eps}$ is $1$- or $2$-dimensional Lebesgue measure, respectively.  In particular,   we set   $m (\{a^*\} )=0$. Note that the measure $m$  depends on $\eps$, the radius of the ``hole" $B_\eps$.  The following definition for BMVD can be found in \cite[Definition 1.1]{CL}.
\begin{dfn}[Brownian motion with varying dimension]\label{def-bmvd}An $m$-symmetric diffusion process satisfying the following properties is called Brownian motion with varying dimension. 
\begin{description}
\item{\rm (i)} its part process in $\IR_+$ or $D_\eps$ has the same law as standard Brownian motion in $\IR_+$ or $D_\eps$;
\item{\rm (ii)} it admits no killings on $a^*$;
\end{description}
\end{dfn}
 It follows from the definition that BMVD spends zero amount of time under Lebesgue measure (i.e. zero sojourn time) at $a^*$. The following theorem is a restatement of   \cite[Theorem 2.2]{CL}, which  asserts that given  $\eps>0$,  BMVD exists and is unique in law. It also gives the Dirichlet form characterization for such a process.

\begin{thm}\label{BMVD-non-drift}
For every  $\eps >0$,
BMVD  on $E$ with parameter $\eps$ exists and is unique.
Its associated Dirichlet form $(\EE, \mathcal{D}(\EE))$ on $L^2(E; m)$ is given by
\begin{eqnarray*}
\mathcal{D}(\EE) &= &  \left\{f: f|_{D_\eps}\in W^{1,2}(D_\eps),  \, f|_{\IR_+}\in W^{1,2}(\IR_+),
\hbox{ and }
f (x) =f (0) \hbox{ q.e. on } {\partial D_\eps}\right\},  
\\
\EE(f,g) &=& \frac{1}{4} \int_{D_\eps}\nabla f(x) \cdot \nabla g(x) dx+\frac{1}{2}\int_{\IR_+}f'(x)g'(x)dx . 
\end{eqnarray*}
Furthermore, such a BMVD is a Feller process with strong Feller property. 
\end{thm}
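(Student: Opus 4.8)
\medskip
\noindent
\textbf{Proof sketch.}
The plan is to obtain existence from the theory of regular Dirichlet forms, uniqueness from the theory of one-point extensions of symmetric Markov processes, and the Feller properties from transition density estimates. For existence, one first checks that the bilinear form $(\EE,\mathcal D(\EE))$ displayed above is a regular, strongly local, symmetric Dirichlet form on $L^2(E;m)$. Non-negativity and symmetry are clear; closedness follows because $W^{1,2}(D_\eps)$ and $W^{1,2}(\IR_+)$ are Hilbert spaces and the trace map $f\mapsto f|_{\partial D_\eps}$ is $W^{1,2}(D_\eps)$-continuous, so the constraint ``$f=f(0)$ q.e.\ on $\partial D_\eps$'' cuts out a closed subspace; the Markovian property is inherited because unit contractions act on $W^{1,2}$ of each component and preserve the gluing condition. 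Regularity requires producing enough functions in $\mathcal D(\EE)\cap C_c(E)$: away from $a^*$ this is classical, while near $a^*$ one glues a compactly supported function on $\IR_+$ to a radially constant function on a collar of $\partial D_\eps$ in $D_\eps$, which (since $\partial D_\eps$ is a smooth curve) produces families dense both in the $\EE_1$-norm and in the uniform norm. The general theory then yields an associated $m$-symmetric Hunt process $X$ on $E$, and strong locality of $\EE$ forces $X$ to have continuous sample paths, i.e.\ to be a diffusion.

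To see that $X$ is a BMVD in the sense of Definition~\ref{def-bmvd}: restricting $\EE$ to functions supported in $D_\eps$ (resp.\ $\IR_+$) gives the ``part form'' $\big(\tfrac14\int_{D_\eps}\nabla f\cdot\nabla g,\ W^{1,2}_0(D_\eps)\big)$ (resp.\ $\big(\tfrac12\int_{\IR_+}f'g',\ W^{1,2}_0(\IR_+)\big)$), which is the Dirichlet form of Brownian motion on $D_\eps$ (resp.\ $\IR_+$) absorbed at the boundary, normalized as in Definition~\ref{def-bmvd}; hence the part process of $X$ killed upon leaving $D_\eps$ or $\IR_+$ has the prescribed law, giving (i). The killing measure of $(\EE,\mathcal D(\EE))$ vanishes and $m(\{a^*\})=0$, so $X$ has no killing, in particular none at $a^*$, giving (ii). Since planar Brownian motion hits the one-dimensional curve $\partial B_\eps$ in finite time almost surely, $X$ genuinely reaches $a^*$ and crosses between the two components through it; thus $X$ is a BMVD with the stated Dirichlet form.

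For uniqueness, let $Y$ be any $m$-symmetric diffusion with properties (i)--(ii). Property (i) determines the subprocess of $Y$ obtained by killing it upon hitting $a^*$: it is the disjoint union of absorbed Brownian motion on $D_\eps$ and on $\IR_+$, whose Dirichlet form is the direct sum of the two part forms found above. The full process $Y$ is thus a symmetric extension of this subprocess across the single point $a^*$, and by the classification of one-point extensions (equivalently, darning at a point) of symmetric Markov processes, such an extension is pinned down by a killing rate and a ``sojourn'' parameter at $a^*$; requiring \emph{no killing} together with $m(\{a^*\})=0$ forces the conservative extension, which is unique and has Dirichlet form $(\EE,\mathcal D(\EE))$. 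Hence $Y$ and $X$ share the same Dirichlet form and therefore the same law.

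Finally, the Feller and strong Feller properties follow from transition density bounds: decompose $E$ into a collar of $a^*$ and the ``far'' parts of $D_\eps$ and $\IR_+$, and split the resolvent $R_\alpha$ accordingly; on the far parts the classical Gaussian estimates for $\IR^2$- and $\IR^1$-Brownian motion apply, while near $a^*$ one patches the (logarithmic) Green function of planar Brownian motion killed on $\partial B_\eps$ to the one-dimensional one to get joint continuity of the transition density together with $\|R_\alpha f\|_\infty\le\alpha^{-1}\|f\|_\infty$, so that $R_\alpha$ maps $B_b(E)$ into $C_b(E)$ (strong Feller) and $C_\infty(E)$ into $C_\infty(E)$ (Feller). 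I expect the crux to be the uniqueness step -- the classification of one-point extensions and the exclusion of any hidden killing or pausing of the diffusion at $a^*$ -- with the regularity of $\EE$ and the faithful encoding of the ``shorting'' of $B_\eps$ by the trace condition $f=f(0)$ on $\partial D_\eps$ a close second, and with the uniform control of the transition density across the change of dimension the remaining nontrivial input; everything else is routine Dirichlet form theory and standard Brownian estimates.
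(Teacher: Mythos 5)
This theorem is not proved in the paper at all: it is imported verbatim as a restatement of \cite[Theorem 2.2]{CL} (see the sentence immediately preceding it), so there is no in-paper argument to compare yours against. Your sketch does, however, track the strategy of the cited source quite faithfully: construction of the process from the regular, strongly local Dirichlet form obtained by ``shorting'' $B_\eps$ (the trace condition $f=f(0)$ q.e.\ on $\partial D_\eps$), uniqueness via the Chen--Fukushima theory of one-point extensions/darning at $a^*$, and Feller/strong Feller via resolvent and heat-kernel estimates near and away from the darning point. Two places where your outline would need the most work if written out: (a) in the uniqueness step, the one-point extension is classified not only by killing and sojourn at $a^*$ but also by the relative weight with which excursions enter $\IR_+$ versus $D_\eps$ (the parameter $p$ in \cite{CL}); this is pinned down by the requirement of $m$-symmetry for the \emph{specific} measure $m$, and you should say so explicitly rather than leaving it implicit, also noting that continuity of paths rules out a jumping-in measure at $a^*$; (b) the strong Feller property is genuinely the most technical part in \cite{CL} and cannot be dispatched by merely ``patching Green functions'' --- one needs quantitative control of the transition density uniformly across the change of dimension. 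As a sketch, though, your proposal is a correct outline of the known proof.
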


\begin{rmk}
For the computation convenience in this paper, we let  BMVD in Theorem \ref{BMVD-non-drift} corresponds to the BMVD defined in \cite[Theorem 2.2]{CL} with parameters $(\eps, p=1)$ but running on $D_\eps$ at a speed $1/2$. 
\end{rmk}

Throughout  this paper,  the state space $E$ is equipped with the geodesic distance  $\rho$.  Namely, for $ x,y\in E$, $\rho (x, y)$ is the shortest path distance (induced
from the Euclidean space) in $E$ between $x$ and $y$.
For notation simplicity, we write $|x|_\rho$ for $\rho (x, a^*)$.
We use $| \cdot |$ to denote the usual Euclidean norm. For example, for $x,y\in {D_\eps}$,
$|x-y| $ is  the Euclidean distance between $x$ and $y$ in $\IR^2$.
Note that for $x\in {D_\eps}$, $|x|_\rho =|x|-\eps$.
Clearly,
\begin{equation}\label{e:1.1}
\rho (x, y)=|x-y|\wedge \left( |x|_\rho + |y|_\rho \right)
\quad \hbox{for } x,y\in {D_\eps}
\end{equation}
and $\rho (x, y)= |x|+|y|-\eps$ when $x\in \IR_+$ and $y\in {D_\eps}$ or vice versa.
Here and in the rest of this paper, for $a,b\in \IR$, $a\wedge b:=\min\{a,b\}$.


It is well-known that Brownian motion on Euclidean spaces is the scaling limit of simple random walks on square lattices.  It was shown in \cite{BC} that reflected Brownian motion is the scaling limit of simple random walks on square lattices. This motivates us to study the discrete appromation to BMVD which is an example of strong Markov process constructed via ``darning".   Since not  until \cite{CL} was the concept of BMVD introduced, the amount of existing literature on this topic is very limited. The ``darning" point in the state space is very singular in the sense that it is not only nonsmooth, but also it is where two spaces with different dimensionalities are joined together. In this paper, we answer the question that from the perspective of discrete random walks, how one can ``interprete" or ``visualize"  Brownian motions on such an unsual space characterized in an abstract way via Dirichlet forms.

Towards this goal, we use the approach in \cite{BC}, but one of the major differences between this paper and \cite{BC} is that, rigorously speaking, the state space of BMVD is not Euclidean, thus the metric on the state space of BMVD under which BMVD is continuous cannot be the Euclidean distance but needs to be the geodesic distance $\rho$. Therefore, we need to establish the (C-)tightness and weak convergence of the random walks with respect to the geodesic distance $\rho$. One of the important ingredients in this paper is to find the comparability between $\rho$ and Euclidean distance on the state space $E$ near $a^*$. Furthermore, since BMVD is constructed by ``darning" together two Brownian motions with different dimensionalities using Dirichlet form theory, we need to find an explicit description to the behavior of the sequence of discrete random walks near the ``darning point", so that the limiting behavior of these random walks indeed converge to the BMVD.

To introduce the state space of  random walks with varying dimension, for $k\in \mathbb{N}$,  let $D_\eps ^k:=D_\eps \cap 2^{-k}\IZ^2$. Similar to the definition of the darning point $a^*$ in \cite{CL},  we identify vertices  of $2^{-k}\IZ^2$ that are contained in the closed disc $B_\eps$ as a singleton $a^*_k$. Let $E^k:=2^{-k}\IZ_+\cup \{a^*_k\}\cup D_\eps^k$.

Before introducing   the graph structure on $E^k$, in general we know a graph $G$ is defined by its vertices and edges, thus can be written as ``$G=\{G_v, G_e\}$", where $G_v$ is its collection of vertices, and $G_e$ is its connection of edges. Given any two vertices in $a,b\in G$,  if there is unoriented edge with endpoints $a$ and $b$,  we say {\it $a$ and $b$ are adjacent to each other in $G$} and write it as  `` $a\leftrightarrow b$ in $G$".   In this paper, it is always assumed that given two vertices $a, b$ on a graph, there is at most  one such unoriented edge connecting these two points (otherwise edges with same endpoints can be removed and replaced with one single edge). This unoriented edge is denoted by $e_{ab}$ or $e_{ba}$ ($e_{ab}$ and $e_{ba}$ are viewed as the same elelment in $G_e$).  For notation convenience, in this article we denote by $\mathcal{G}_2:=\{2^{-k}\IZ^2, \mathcal{V}_2\}$ the $2$-dimensional square lattice over $2^{-k}\IZ^2$, where $\mathcal{V}_2$ is the collection of the edges of this graph. We also denote by $\mathcal{G}_1:=\{2^{-k}\IZ^1_+\cup \{0\}, \mathcal{V}_1\}$ the  $1$-dimensional  square lattice over $2^{-k}\IZ_+\cup \{0\}$, where $\mathcal{V}_1$ is its collection of edges  and $\IZ_+:=\{l\in \IZ:l>0\}$ is the set of positive integers . 

Now we introduce the graph structure on $E^k$.  For this, we identify $\{0\}$ with $\{a^*_k\}$.    We let $G^k=\{G^k_v, G^k_e\}$ be a graph where $G^k_v=E^k$ is the collection of vertices and $G^k_e$ is the collection of unoriented edges over $E^k$  defined as follows:
\begin{align*}
G^k_e:=&\{e_{xy}:\, \exists \,x,y\in D_\eps^k, |\,x-y|=2^{-k},\, e_{xy}\in\mathcal{V}_2,\, e_{xy}\cap B_\eps=\emptyset\}
\\
\cup &\{e_{xy}: \exists \, x,y\in 2^{-k}\IZ_+\cup \{0\}, \,|x-y|=2^{-k},\, e_{xy}\in 
\mathcal{V}_1\}
\\
\cup &\{e_{xa^*_k}:x\in  D^k_\eps,\text{ there is at least one }y\in  2^{-k}\IZ^2\cap B_\eps \text{ with }|x-y|=2^{-k}, \,e_{xy}\in \mathcal{V}_2\}.
\end{align*}
  Note that $G^k=\{G^k_v, G^k_e\}$ is a connected graph. We emphasize that  given any $x\in G^k_v$, $x\neq a^*_k$, there is at most one element in $G^k_e$ with endpoints $x$ and $a^*_k$.   We denote by $v_k(x)=\#\{e_{xy}\in G^k_e\}$, i.e., the number of vertices in $G^k_v$ adjacent to $x$.  $E^k$ is equipped with the following underlying reference measure:
\begin{equation}\label{def-mk}
m_k(x):=\left\{
    \begin{aligned}
         &\frac{2^{-2k}}{4}v_k(x),  &x \in D^k_\eps;\\
        &\frac{2^{-k}}{2}v_k(x), &x \in 2^{-k}\IZ_+;\\
        & \frac{2^{-k}}{2}+\frac{2^{-2k}}{4}\left(v_k(x)-1\right), & x=a^*_k.
    \end{aligned}
\right.
\end{equation}
To give definition to a random walk on bounded spaces with varying dimension, we consider the following  Dirichlet form on $L^2(E^k, m_k)$:
\begin{align}\label{DF-RWVD-form}
\left\{
\begin{aligned}
&\mathcal{D}(\EE^{k})=L^2(E^k, m_k)
\\
&\EE^{k}(f, f)= \frac{1}{8}\sum_{\substack{e^o_{xy}:\; e_{xy}\in G^k_e,\\ x,y\in D^k_\eps\cup \{a_k^*\} }} \left(f(x)-f(y)\right)^2 +\frac{2^k}{4}\sum_{\substack{e^o_{xy}:\;e_{xy}\in G^k_e,\\ x,y\in 2^{-k}\IZ_+\cup \{a_k^*\} }}\left(f(x)-f(y)\right)^2,
\end{aligned}
\right.
\end{align}
where $e^o_{xy}$ is an {\it oriented edge} from  $x $  to  $y$. In other words, given any pair of  adjacent  vertices $x, y \in G^k_v$,  the edge with endpoints $x$ and $y$ is represented twice in the sum: $e^o_{xy}$ and $e^o_{yx}$.  It is easy to verify that $(\EE^k, \mathcal{D}(\EE^{k}))$ on $L^2(E^k, m_k)$ is a regular symmetric Dirichlet form, therefore there is a symmetric strong Markov process associated with it. We denote this process by $X^k$.
The explicit distribution of $X^k$ is given in Proposition \ref{jump-distribution-Xk}. To describe it in words,  $X^k$ stays at each vertex of $E^k$ for an exponentially distributed holding time with parameter $2^{2k}$ before jumping to one of its neighbors in $E^k$.  Starting from every vertex except $a^*_k$, upon every move, $X^k$ jumps to any of its nearest neighbors with equal probability.  From  $a^*_k$, at every move,  $X^k$ jumps to its neighbor in $2^{-k}\IZ_+$ with probability $2^{k+1}\cdot [v_k(a_k^*)+2^{k+1}-1]^{-1}$, while to each of its neighbors in $D_\eps^k$ with probability $[v_k(a^*_k)+2^{k+1}-1]^{-1}$.

In this paper, we study random walk approximation to BMVD in bounded domains.  We let  $E_0\subset E$ be an open bounded domain containing $a^*$ satisfying the following conditions: 
\begin{description}\label{assumption}
\item[Assumption] $E_0$ is a bounded Lipschitz domain. 
\end{description}
Let $E^k_0:= E_0\cap E^k$.    Similar to the definition for graph $G^k$, we set the graph $G_0^k=\{G^k_{0,v}, G^k_{0,e}\}$, where $G^k_{0,v}=E^k_0$ is the collection of vertices of $G^k_0$  and 
\begin{align*}
G^k_{0,e}:=&\{e_{xy}\subset E_0:\,\exists \,x,y\in D_\eps^k\cap E_0, \,|x-y|=2^{-k}, \,e_{xy}\in \mathcal{V}_2, \, e_{xy}\cap B_\eps=\emptyset\}
\\
\cup &\{e_{xy}\subset E_0: \exists \,x,y\in (2^{-k}\IZ_+\cap E_0)\cup \{0\}, \,|x-y|=2^{-k}, \, e_{xy}\in \mathcal{V}_1\}
\\
\cup &\{e_{xa^*_k}\subset E_0: x\in D^k_\eps\cap E_0,\text{ there is at least one }y\in  2^{-k}\IZ^2\cap B_\eps \text{ with }|x-y|=2^{-k}, e_{xy}\in \mathcal{V}_2\}
\end{align*}
is the collection of unoriented edges of $G^k_0$.   In view of the assumption that $E_0$ is connected, for sufficiently large $k
\in \mathbb{N}$,  $G_0^k$ is also a connected graph.  Here for notational  simplicity, without loss of generality, we assume that for all $k\ge 1$,  $G_0^k$ is also a connected graph. We denote by $\overline{v}_k(x)=\#\{e_{xy}\in G^k_{0, e}\}$, i.e., the number of vertices in $G^k_{0, v}$ adjacent to $x$.   We equip $E_0^k$ with the following measure $\overline{m}_k$:
\begin{equation}\label{def-bar-mk}
\overline{m}_k(x):=\left\{
    \begin{aligned}
         &\frac{2^{-2k}}{4}\bar{v}_k(x),  &x \in D^k_\eps;\\
        &\frac{2^{-k}}{2}\bar{v}_k(x), &x \in 2^{-k}\IZ_+;\\
        & \frac{2^{-k}}{2}+\frac{2^{-2k}}{4}\left(\bar{v}_k(x)-1\right), & x=a^*_k,
    \end{aligned}
\right.
\end{equation}
where $\bar{v}_k(x)$ is the number of vertices in $E^0_k$ that are adjacent to $x$. Here we emphasis that for $x\in E^k_0$, since $\bar{v}_k(x)$ is not necessarily equal to $v_k(x)$, $\overline{m}_k(x)$ may differ from $m_k(x)$. The boundary of $E_0^k$ is defined as
\begin{equation*}
\partial E^k_0 :=\{x\in E_0^k:\, \exists\; y\in E^k\backslash E_0^k \text{ such that }x\leftrightarrow y \text{ in }G^k\}.
\end{equation*}
It is thus easy to see that 
\begin{equation}\label{boundary-lattice-domain}
\partial E^k_0=\left\{ x\in E^k\cap D_\eps:\, \overline{v}_k(x)<4, \, v_{k}(x)=4 \right\}\cup \left\{ x\in E^k\cap 2^{-k}\IZ_+:\; \overline{v}_k(x)<2,\, v_k(x)=2 \right\}.
\end{equation}

Without loss of generality, throughout this paper we assume  $\rho(a^*, \, \partial E_0)>16\eps$, and that $2^{-k}< \eps/4$ for all $k\ge 1$.  Denote the part process of $X^k$ killed upon hitting $\partial E_0^k$ by $\wh{X}^{k}$.  Our main result is the following theorem.  
\begin{thm}\label{main-result}
For every $T>0$, the laws of  $\{\wh{X}^{k}, \IP^{\overline{m}_k}\}_{k\ge 1}$ are tight in the space $\mathbf{D}([0, T], \overline{E}_0, \rho)$ equipped with Skorokhod topology. Furthermore, $(\wh{X}^{k}, \IP^{\overline{m}_k}) $ converges weakly to the part process of BMVD with parameter $\eps$ killed upon leaving $E_0$ as $k\rightarrow \infty$. 
\end{thm}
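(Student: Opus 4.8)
The plan is to follow the standard two-step scheme for invariance principles via Dirichlet forms: (1) establish C-tightness of the laws of $\{\wh{X}^k,\IP^{\overline m_k}\}$ in the Skorokhod space $\mathbf D([0,T],\overline E_0,\rho)$, and (2) identify every subsequential limit as the part process of BMVD on $E_0$ by showing Mosco (or, equivalently here, convergence of finite-dimensional distributions through resolvent/semigroup convergence) of the discrete Dirichlet forms $(\EE^k,\mathcal D(\EE^k))$ to $(\EE,\mathcal D(\EE))$ restricted to $E_0$. For step (1), the first task is to record the precise law of $X^k$ (Proposition \ref{jump-distribution-Xk}) and derive, for each vertex $x$ and each lattice level $k$, the one-step mean displacement and mean-square displacement \emph{measured in the geodesic distance} $\rho$. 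Away from $a^*$ the lattice is an ordinary $2$-d or $1$-d square lattice and these are the classical estimates; the key new input is the comparability of $\rho$ with the Euclidean metric near $a^*$, asserted in the introduction, which lets us transfer the Euclidean moment bounds to $\rho$-moment bounds uniformly in $k$. From these one gets the Lindeberg-type bound $\IE^x[\rho(X^k_{t+s},X^k_t)^4\mid \FF_t]\le C s^2$ for $s$ on the scale of the holding times, and C-tightness then follows from Aldous's criterion together with the (one-point) compactness of $\overline E_0$; because $\rho$-jumps of $\wh X^k$ are bounded by $C 2^{-k}\to0$, the limit lives in $\mathbf C([0,T],\overline E_0,\rho)$.

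For step (2), I would work at the level of Dirichlet forms on $L^2(E_0;\overline m_k)$ versus $L^2(E_0;m)$. First note $\overline m_k\to m$ vaguely on $E_0$ (the volume factors $2^{-2k}\bar v_k(x)/4$ and $2^{-k}\bar v_k(x)/2$ are exactly the Lebesgue measures of the dual cells, and the $a^*_k$ atom has mass $O(2^{-k})\to0=m(\{a^*\})$), which is the correct normalization for $L^2$-convergence of functions in the sense of Kuwae–Shioya. Then I would prove: (a) a $\Gamma$-$\liminf$ / Mosco I inequality — for $f_k\to f$ strongly in the Kuwae–Shioya sense, $\liminf_k \EE^k(f_k,f_k)\ge \EE(f,f)$, which uses that the discrete quadratic form is a Riemann sum of $\frac14|\nabla f|^2$ on $D_\eps$ and $\frac12|f'|^2$ on $\IR_+$ (the prefactors $1/8$ and $2^k/4$ are chosen precisely so the edge sums scale to these integrals), together with lower semicontinuity of the Sobolev norms and the fact that a uniform energy bound forces the discrete functions to become constant on the shrinking cluster $B_\eps$ in the limit, reproducing the darning constraint $f=f(0)$ q.e. on $\partial D_\eps$; and (b) a recovery-sequence statement — for every $f\in\mathcal D(\EE)$ there exist $f_k\to f$ with $\EE^k(f_k,f_k)\to\EE(f,f)$, obtained by mollifying $f$, restricting to the lattice, and setting the value on the $a^*_k$-cluster equal to $f(a^*)$; here the Lipschitz assumption on $E_0$ is what permits the extension/mollification near $\partial E_0$ so that the Dirichlet (killed) boundary condition is respected. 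Mosco convergence yields strong $L^2$-convergence of the semigroups, hence convergence of finite-dimensional distributions under $\IP^{\overline m_k}$; combined with C-tightness this gives weak convergence to the process whose Dirichlet form is $(\EE,\mathcal D(\EE))$ killed on leaving $E_0$, i.e.\ the killed BMVD, by the uniqueness in Theorem \ref{BMVD-non-drift}.

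The main obstacle I expect is the analysis near the darning point $a^*_k$ — both in step (1) and step (2). In step (1) the subtlety is that the cluster $a^*_k$ is a single vertex identifying $\asymp \eps^2 2^{2k}$ Euclidean lattice points of $B_\eps$, so the walk $X^k$ from $a^*_k$ has $\overline v_k(a^*_k)\asymp \eps 2^k$ neighbors in $D_\eps^k$ and $2$ neighbors on $\IZ_+$, with the strongly asymmetric jump probabilities $\asymp 2^k/(\eps 2^k)$ versus $\asymp 1/(\eps 2^k)$ recorded above; one must check that this does \emph{not} create an atom of occupation time at $a^*$ in the limit and that the correct amount of ``local time pushing'' in each of the two directions matches the flux split $\tfrac14\partial_n$ on $D_\eps$ vs.\ $\tfrac12\partial$ on $\IR_+$ dictated by $\EE$. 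Equivalently, in the form-convergence picture, the delicate point is controlling the contribution to $\EE^k(f_k,f_k)$ of the edges incident to $a^*_k$: there are order $\eps 2^k$ such edges each contributing $(f_k(x)-f_k(a^*_k))^2$, and one must show that under a uniform energy bound these differences are $o(1)$ (so no spurious energy concentrates at the darning point) while still correctly producing, in the recovery sequence, the exact coupling between the $2$-d and $1$-d pieces. The geodesic-vs-Euclidean comparison near $a^*$ is the technical lever for all of this, and getting the constants to line up so that the limit is \emph{exactly} the BMVD of Theorem \ref{BMVD-non-drift} (with the speed-$1/2$ normalization of the Remark) — rather than a BMVD with a different relative conductance — is where the real care is needed.
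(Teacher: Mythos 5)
Your identification step takes a genuinely different route from the paper's. The paper does not use Mosco/$\Gamma$-convergence at all: it proves uniform convergence of the discrete generators $\wt{\mathcal L}_kf\to\mathcal Lf=\frac12\Delta f|_{\IR_+}+\frac14\Delta f|_{D_\eps}$ for $f$ in a core $\mathcal G$ of functions that are \emph{constant on $B_\eps$} (Lemma \ref{Lemma2.6}), on good sets $S_0^k$ whose complements have small $\overline m_k$-measure, then passes to the limit in the discrete martingale identity to show that every subsequential limit solves the martingale problem $(\mathcal L,\mathcal G)$, and pins down the limit by well-posedness of that martingale problem, which rests on the Feller/strong Feller property of BMVD from Theorem \ref{BMVD-non-drift} (Proposition \ref{P:3.8}). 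Your Mosco/Kuwae--Shioya route is viable in principle and would deliver $L^2$-semigroup convergence directly, but the two points you yourself flag as delicate --- recovering the darning constraint $f=f(0)$ q.e.\ on $\partial D_\eps$ from a uniform energy bound on the $\asymp\eps 2^k$ edges at $a^*_k$, and handling the killed boundary condition under a varying base measure --- are each a substantial lemma, not a remark. The paper's choice of test functions already constant on $B_\eps$ is precisely how it avoids ever having to show that no energy concentrates at the darning point.

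The genuine gap is in your tightness step. The bound $\IE^x[\rho(X^k_{t+s},X^k_t)^4\mid\FF_t]\le Cs^2$, uniform in the starting point, is asserted rather than proved, and it is exactly where the difficulty sits: near $a^*_k$ the radial coordinate is not a martingale (from $a^*_k$ the walk goes to the pole with an $O(1)$ probability and otherwise to a uniformly chosen vertex adjacent to $B_\eps$), the crude estimate ``$\approx 2^{2k}s$ jumps of $\rho$-size $2^{-k}$'' only yields $2^{4k}s^4$, and extracting the diffusive $s^2$ requires pointwise control of a drift term at the darning point that you have not supplied. The paper avoids pointwise moment estimates entirely: it first passes to the conservative reflected walk $\wt X^k$, for which $\overline m_k$ is invariant, writes $f_i(\wt X^k_t)-f_i(\wt X^k_0)=M^{k,f_i}_t+N^{k,f_i}_t$ via the Fukushima decomposition for the three projection coordinates, bounds $\langle M^{k,f_i}\rangle_t-\langle M^{k,f_i}\rangle_s\le 81(t-s)$ using only the jump bound $|f_i(x)-f_i(y)|\le 9\cdot 2^{-k}$ of Lemma \ref{P:3.4}, and disposes of the zero-energy part by the Lyons--Zheng time-reversal identity \eqref{eq3.4}, which is available only under the stationary initial law $\IP^{\overline m_k}$; Lemma \ref{P:3.5} then converts tightness of $(f_1,f_2,f_3)(\wt X^k)$ into tightness in $\rho$. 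Tightness of the killed walk $\wh X^k$ is afterwards deduced from that of $\wt X^k$ through the resurrection/Ikeda--Nagasawa--Watanabe relation (Lemma \ref{L:2.10}) --- a step your proposal omits: you cannot run the stationarity argument directly on $\wh X^k$ because it is not conservative. To salvage your route you would need either to prove the conditional fourth-moment bound uniformly near $a^*_k$, or to adopt some form of the forward--backward martingale decomposition, at which point you are essentially back to the paper's argument.
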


For technical convenience, we will often consider stochastic processes whose initial distribution is a finite measure, not necessarily normalized to have total mass $1$, for example, the Lebesgue measure on a bounded set. Translating the results to the usual probabilistic setting is straightforward  and therefore skipped.

We close this section by giving an outline of the rest of this article as well as the ideas of the proof.  In Section \ref{S:2} we give a quick review on continuous-time reversible Markov chains and their associated Dirichlet forms.  The proof to the main result Theorem \ref{main-result} is presented in Section \ref{S:3}. To prove Theorem \ref{main-result}, we first consider $\{\wt{X}^k, \IP^{\overline{m}_k}; k\ge 1\}$,  a class of random walks  on $E_0$ ``reflected" upon hitting $\partial E_0$ whose rigorous definition is given in Proposition \ref{jump-distribution}. Using the property that $\{\wt{X}^k,\IP^{\overline{m}_k}; k\ge 1\}$ have infinite lifetimes, we show that they are C-tight in law. Using Dirichlet form theory, it can further be shown that  $\{\wt{X}^k,\IP^{\overline{m}_k}; k\ge 1\}$ is a resurrected process of  $\{\wh{X}^k, \IP^{\overline{m}_k};k\ge 1\}$. Therefore one can equivalently construct  $\{\wt{X}^k,\IP^{\overline{m}_k}; k\ge 1\}$ from  $\{\wh{X}^k, \IP^{\overline{m}_k};k\ge 1\}$  using the  Ikeda-Nagasawa-Watanabe ``piecing-together" procedure.  This implies that $\{\wh{X}^k, \IP^{\overline{m}_k};k\ge 1\}$, the part processes of $\{X^k\}_{k\ge 1}$ killed upon exiting $E_0$ are also C-tight in law. From here, using the martingale problem method, we show that the  limit of any weakly convergent subsequence of $\{\wt{X}^k, \IP^{\overline{m}_k};\; k\ge 1\}$ restricted on $E_0$ is a BMVD. Finally, with a bit more argument, this leads to  Theorem \ref{main-result}.

\subsection{Preliminaries:  continuous-time reversible pure jump processes and symmetric Dirichlet forms}\label{S:2}

In this section, we give a brief background on continuous-time reversible pure jump processes   and symmetric Dirichlet forms. The results in this section can be found in \cite[\S 2.2.1]{CF}.

Suppose $\mathsf{E}$ is a locally compact separable metric space and $\{Q(x, dy)\}$ is a probability kernel on $(\mathsf{E}, \mathcal{B}(\mathsf{E}))$ with $Q(x, \{x\})=0$ for every $x\in \mathsf{E}$. Given a constant $\lambda >0$, we can construct a pure jump Markov process $\mathsf{X}$ as follows: Starting from $x_0\in \mathsf{E}$, $\mathsf{X}$ remains at $x_0$ for an exponentially distributed holding time $T_1$ with parameter $\lambda(x_0)$ (i.e., $\IE[T_1]=1/\lambda(x_0)$), then it jumps to some $x_1\in \mathsf{E}$ according to distribution $Q(x_0, dy)$; it remains at $x_1$ for another exponentially distributed holding time $T_2$  also with  parameter $\lambda(x_1)$ before jumping to $x_2$ according to distribution $Q(x_1, dy)$.  $T_2$ is independent of $T_1$. $\mathsf{X}$ then continues. The probability kernel $Q(x, dy)$ is called the {\it road map} of $\mathsf{X}$, and the $\lambda(x)$ is its {\it speed function}. If there is a $\sigma$-finite measure $\mathsf{m}_0$ on $\mathsf{E}$ with supp$[\mathsf{m}_0]=\mathsf{E}$ such that
\begin{equation}\label{symmetrizing-meas}
Q(x, dy)\mathsf{m}_0(dx)=Q(y, dx)\mathsf{m}_0(dy),
\end{equation}
$\mathsf{m}_0$ is called a {\it symmetrizing measure} of the road map $Q$. Another way to view \eqref{symmetrizing-meas} is that, $Q(x, dy)$ is the one-step transition ``probability" distribution, so its density with respect to the symmetrizing measure $Q(x,dy)/\mathsf{m}_0(dy)$ must be symmetric in $x$ and $y$, i.e., 
\begin{equation*}
\frac{Q(x, dy)}{\mathsf{m}_0(dy)}=\frac{Q(y, dx)}{\mathsf{m}_0(dx)}. 
\end{equation*}
The following theorem is a restatement of  \cite[Theorem 2.2.2]{CF}.
\begin{thm}[\cite{CF}]\label{DF-pure-jump}
Given a speed function $\lambda >0$. Suppose \eqref{symmetrizing-meas} holds, then the reversible pure jump process $\mathsf{X}$ described above can be characterized by the following Dirichlet form $(\mathfrak{E}, \mathfrak{F})$ on $L^2(\mathsf{E}, \mathsf{m}(x))$ where the underlying reference measure is $\mathsf{m}(dx)=\lambda(x)^{-1}\mathsf{m}_0(dx)$ and
\begin{equation}\label{DF-EN}
\left\{
    \begin{aligned}
        &\mathfrak{F}=  L^2(\mathsf{E},\; \mathsf{m}(x)), \\
        &\mathfrak{E}(f,g) = \frac{1}{2} \int_{\mathsf{E}\times \mathsf{E}} (f(x)-f(y))(g(x)-g(y))Q(x, dy)\mathsf{m}_0(dx).
    \end{aligned}
\right.
\end{equation}
\end{thm}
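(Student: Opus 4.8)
The plan is to carry out the standard correspondence between continuous-time reversible pure jump processes and symmetric Dirichlet forms: (a) verify that $(\mathfrak E,\mathfrak F)$ in \eqref{DF-EN} is a symmetric Dirichlet form on $L^2(\mathsf E,\mathsf m)$; (b) identify its $L^2$-generator; (c) compute the infinitesimal generator of the explicitly constructed jump process $\mathsf X$ and observe that it agrees with the generator from (b); and (d) invoke the one-to-one correspondence between $\mathsf m$-symmetric Markov processes and (regular) Dirichlet forms to conclude that $\mathsf X$ is exactly the process attached to $(\mathfrak E,\mathfrak F)$. Since the statement is a restatement of \cite[Theorem 2.2.2]{CF}, I only indicate the route.

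For (a): bilinearity is clear, symmetry of $\mathfrak E$ is immediate from \eqref{symmetrizing-meas}, and the Markov property of the form holds because replacing $f$ by the unit contraction $(0\vee f)\wedge 1$ only shrinks each integrand $(f(x)-f(y))^2$. For the domain I would use $(f(x)-f(y))^2\le 2f(x)^2+2f(y)^2$ together with $\int_{\mathsf E\times\mathsf E}f(x)^2\,Q(x,dy)\,\mathsf m_0(dx)=\int_{\mathsf E}f(x)^2\lambda(x)\,\mathsf m(dx)$ (and the analogous identity for the $f(y)^2$ term, obtained by applying \eqref{symmetrizing-meas}) to get $\mathfrak E(f,f)\le 2\int_{\mathsf E}f^2\lambda\,d\mathsf m$. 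When $\lambda$ is bounded — the only case used in this paper, where $\lambda\equiv 2^{2k}$ — the right-hand side is finite for every $f\in L^2(\mathsf E,\mathsf m)$, so $\mathfrak F=L^2(\mathsf E,\mathsf m)$, the $\mathfrak E_1$-norm is comparable to the $L^2(\mathsf m)$-norm, and $(\mathfrak E,\mathfrak F)$ is closed; for unbounded $\lambda$ one would instead take the closure of $\mathfrak E$ on the bounded compactly supported functions.

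For (b) and (c): set $Lf(x):=\lambda(x)\int_{\mathsf E}\big(f(y)-f(x)\big)\,Q(x,dy)$. For bounded $f,g$, Fubini's theorem followed by the symmetrization $x\leftrightarrow y$ permitted by \eqref{symmetrizing-meas} gives
\[
-\int_{\mathsf E}Lf(x)\,g(x)\,\mathsf m(dx)
=-\int_{\mathsf E\times\mathsf E}\big(f(y)-f(x)\big)g(x)\,Q(x,dy)\,\mathsf m_0(dx)
=\mathfrak E(f,g),
\]
so on a suitable domain $L$ is the generator of $(\mathfrak E,\mathfrak F)$. On the probabilistic side, from the pathwise description of $\mathsf X$ (an exponential holding time of parameter $\lambda(x)$ at $x$, then a $Q(x,\cdot)$-distributed jump, then iterate) a first-jump decomposition together with Dynkin's formula gives $\lim_{t\downarrow 0}t^{-1}\big(\IE^x[f(\mathsf X_t)]-f(x)\big)=Lf(x)$ for bounded $f$, so $\mathsf X$ has (extended) generator $L$; when $\lambda$ is bounded the number of jumps of $\mathsf X$ in $[0,t]$ is stochastically dominated by a Poisson variable, so $\mathsf X$ is non-explosive and is a genuine Markov process on $[0,\infty)$. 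Finally, the jump kernel $\lambda(x)Q(x,dy)$ of $\mathsf X$ satisfies detailed balance with respect to $\mathsf m=\lambda^{-1}\mathsf m_0$ — this is just \eqref{symmetrizing-meas} after cancelling the factors $\lambda$ — so $\mathsf X$ is $\mathsf m$-symmetric; since an $\mathsf m$-symmetric Markov process and its Dirichlet form determine one another, $\mathsf X$ is the process associated with $(\mathfrak E,\mathfrak F)$.

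The only step requiring genuine care, in general, is the simultaneous identification in (b)--(c): that the probabilistic transition semigroup $p_t$ of $\mathsf X$ and the $L^2$-semigroup $e^{tL}$ of the closed form have the same generator $L$, i.e.\ that $L$ has a common operator core for the two. In the present application $\lambda\equiv 2^{2k}$ is constant, so $L$ is a bounded operator on $L^2(\mathsf E,\mathsf m)$ and the two semigroups coincide at once; in general this reduces to uniqueness of the minimal semigroup solving the Kolmogorov equations (equivalently, non-explosion), which is the place where, for unbounded $\lambda$, one must restrict the state space or the form domain to rule out escape to infinity. These details are carried out in \cite[\S 2.2.1]{CF}.
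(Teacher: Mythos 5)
Your proposal is correct; note that the paper offers no proof of this statement at all --- it is quoted verbatim as \cite[Theorem 2.2.2]{CF} and used as a black box --- so your sketch simply supplies the standard correspondence argument from \cite[\S 2.2.1]{CF} that the paper delegates entirely to the reference. The one delicate point is exactly the one you flag: the claim $\mathfrak F=L^2(\mathsf E,\mathsf m)$ and the identification of the probabilistic transition semigroup with the $L^2$-semigroup of the closed form follow at once from the bound $\mathfrak E(f,f)\le 2\int_{\mathsf E}f^2\lambda\,d\mathsf m$ only when $\lambda$ is bounded (so that the generator $L$ is a bounded operator), which covers the sole use made of the theorem in this paper, namely $\lambda\equiv 2^{2k}$.
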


\section{Continuous-time random walks on lattices with varying dimension} \label{S:3}

To begin with, we show that the measures $m_k$ and $\overline{m}_k$ both converge to Lebesgue measure restricted on $E_0$. 
\begin{prop} \label{P:2.1}
On $E_0$, as $k\rightarrow \infty$,
\begin{equation*}
m_k\stackrel{\text{weakly}}{\Rightarrow } m|_{E_0}, \quad \text{and} \quad \overline{m}_k\stackrel{\text{weakly}}{\Rightarrow } m|_{E_0}.
\end{equation*}
\end{prop}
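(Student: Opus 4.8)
The plan is to prove this by the standard criterion for weak convergence of measures on a locally compact separable metric space: it suffices to show that $\int_{E_0} f \, dm_k \to \int_{E_0} f \, dm|_{E_0}$ and $\int_{E_0} f \, d\overline{m}_k \to \int_{E_0} f \, dm|_{E_0}$ for every $f \in C_c(E_0)$ (or, since $E_0$ is bounded, for $f \in C(\overline{E}_0)$). Since $E_0$ decomposes into its $2$-dimensional part $D_\eps \cap E_0$, its $1$-dimensional part $\IR_+ \cap E_0$, and the single point $a^*$ (which has $m$-measure zero), I would treat the three contributions separately and exploit the fact that $m_k$ and $\overline{m}_k$ are, away from a bounded neighborhood of $a^*$ and of $\partial E_0$, simply the uniform measures assigning mass $2^{-2k}$ (resp.\ $2^{-k}$) to each lattice site of $2^{-k}\IZ^2$ (resp.\ $2^{-k}\IZ_+$) — exactly the weights that make the Riemann-sum approximation to $2$- and $1$-dimensional Lebesgue measure work.

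First I would handle the $2$-dimensional component. For $x \in D_\eps^k$ with $v_k(x) = 4$ (an "interior" lattice point whose four neighbors are all in $D_\eps^k$), we have $m_k(x) = 2^{-2k}$, which is exactly the area of the lattice square it represents; the same holds for $\overline{m}_k(x)$ when $\overline{v}_k(x)=4$. Thus $\int_{D_\eps\cap E_0} f\, dm_k = \sum_x f(x)\, m_k(x)$ is a Riemann sum for $\int_{D_\eps \cap E_0} f(x)\,dx$ up to two sources of error: (a) the boundary layer near $\partial E_0$ and near $\partial D_\eps = \partial B_\eps$, where $v_k(x) < 4$, and (b) for $\overline{m}_k$, the additional boundary layer where $\overline{v}_k(x) < 4$. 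In both cases the offending sites lie within Euclidean distance $O(2^{-k})$ of a fixed rectifiable curve ($\partial E_0$, which is Lipschitz hence has finite $1$-dimensional measure, or the circle $\partial B_\eps$), so there are $O(2^k)$ such sites, each carrying mass $O(2^{-2k})$; since $f$ is bounded, their total contribution is $O(2^{-k}) \to 0$. The analogous argument on the $1$-dimensional component is easier: there $v_k(x)=2$ gives $m_k(x) = 2^{-k}$, the length of the represented segment, and the only "boundary" sites are the two endpoints of $\IR_+ \cap E_0$ (one of which is near $a^*$), contributing $O(2^{-k})$.

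Next I would deal with the darning point. The site $a^*_k$ carries mass $m_k(a^*_k) = \frac{2^{-k}}{2} + \frac{2^{-2k}}{4}(v_k(a^*_k)-1)$; since all vertices of $2^{-k}\IZ^2 \cap B_\eps$ are collapsed into $a^*_k$, one has $v_k(a^*_k) = O(2^k)$ (the number of lattice edges crossing the circle $\partial B_\eps$ is $O(2^k)$, plus one edge into $\IR_+$), so $m_k(a^*_k) = O(2^{-k}) \to 0 = m(\{a^*\})$. However, there is a subtlety I should be careful about: the lattice points of $2^{-k}\IZ^2$ that lie \emph{inside} $B_\eps$ are being identified with $a^*_k$ and thus \emph{removed} from the $2$-dimensional sum, whereas in the continuum the region $B_\eps$ is likewise collapsed to $a^*$ and carries no mass — so in fact $D_\eps^k$ represents the lattice approximation to $D_\eps$, and the $\sim \pi\eps^2 2^{2k}$ interior sites of $B_\eps$ simply do not appear, consistent with $m(B_\eps \text{ collapsed}) = 0$. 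Hence no mass is lost or gained in the limit. Combining the three pieces gives $\int_{E_0} f\,dm_k \to \int_{D_\eps\cap E_0} f\,dx + \int_{\IR_+\cap E_0} f\,dx = \int_{E_0} f\,dm$, and identically for $\overline{m}_k$.

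The only real obstacle is the bookkeeping for the boundary layers — making precise that the number of lattice sites within $O(2^{-k})$ of $\partial E_0$ is $O(2^k)$ (this uses the Lipschitz, hence Ahlfors-regular, nature of $\partial E_0$ from the standing Assumption) and similarly near $\partial B_\eps$, and checking that $v_k(a^*_k)$ and $\overline{v}_k(a^*_k)$ grow only like $O(2^k)$. None of this is deep; it is a routine covering-and-counting estimate, and I would state it as a short lemma (number of sites in the $2^{-k}$-neighborhood of a rectifiable curve of length $L$ is $\le C L 2^k$) and apply it three times. Everything else is the elementary observation that $m_k$ and $\overline{m}_k$ are genuine Riemann-sum (midpoint-rule) approximations to Lebesgue measure on the regular part of $E_0$.
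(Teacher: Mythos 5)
Your proposal is correct and follows essentially the same route as the paper: the paper reduces the whole statement to showing $m_k(a^*_k)\to 0$, which it proves by the same packing/counting bound $v_k(a^*_k)=O(2^k)$ for lattice points near $\partial B_\eps$ that you use, treating the Riemann-sum convergence on the regular parts as immediate. You simply spell out the routine boundary-layer bookkeeping that the paper leaves implicit.
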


\begin{proof}
It suffices to prove the first result. In particular, it suffices to show that 
\begin{equation*}
m_k(a^*_k)\rightarrow 0, \quad \text{as }k\rightarrow \infty. 
\end{equation*}
 Towards this, in the following we first claim that for all $k\ge 1$, 
\begin{equation}\label{P2.1-1}
\#\{x:\; x\leftrightarrow a^*_k\text{ in }G^k,\, x\in D^k_\eps\}\le 56\eps\cdot 2^{k}+28.
\end{equation}
If a vertex in $D_\eps^k$  is adjacent to $a_k^*$, then it must be within $2\cdot 2^{-k}$ Euclidean distance from $\partial B_\eps$. Also, any pair of  vertices in $\{x:\; x\leftrightarrow a_k^*  \text{ in }G^k, \, x\in D^k_\eps\}$ must be  at least $2^{-k}$ distances apart. For $x\in \IR^2$, we denote by 
\begin{equation*}
B(x, r):=\{y\in \IR^2: \; |y-x|\le r\}.
\end{equation*}
 Thus the family of balls
 \begin{equation*}
 \left\{ B(x, 2^{-k}/2):\,  x\leftrightarrow a_k^* \text{ in } G^k \right\}
\end{equation*} 
 are disjoint. Notice that in Section 1  it  is assumed that $2^{-k}<\eps/4$, thus  all such balls must be contained in $B(0, \eps+4\cdot 2^{-k})\backslash B(0, \eps-3\cdot 2^{-k})$, which has an area of $\pi(14\eps\cdot 2^{-k}+7\cdot 2^{-2k})$. This implies that there can at most be 
\begin{equation*}
\frac{\pi\left(14\eps\cdot 2^{-k}+7\cdot 2^{-2k}\right)}{\pi \left(\frac{1}{2}\cdot 2^{-k}\right)^2}=56\eps \cdot 2^k+28
\end{equation*}
many vertices  in the set $\{x:\; x\leftrightarrow a_k^*\text{ in }G^k\}$. This verifies \eqref{P2.1-1}.   Therefore,
\begin{align*}
m_k(a^*_k)&=\frac{2^{-k}}{2}+\frac{2^{-2k}}{4}\left(v_k(a^*_k)-1\right)\le \frac{2^{-k}}{2}+\frac{2^{-2k}}{4}\cdot \left(56\eps \cdot 2^k+27\right)\rightarrow 0, \, \text{as }k\rightarrow \infty. 
\end{align*}
This proves the statement.
\end{proof}

 Using Theorem \ref{DF-pure-jump}, in the following proposition we  give a description to the behavior  of $X^k$ in the unbounded space with varying dimension $E^k$.

\begin{prop}\label{jump-distribution-Xk}
For every $k=1,2,\dots$, $X^k$ has  constant  speed function $\lambda_k=2^{2k}$ and  road map
\begin{equation*}
J_k(x, dy)=\sum_{z\in E^k, \;z\leftrightarrow x \text{ in }G^k}  j_k(x,z)\delta_{\{z\}}(dy), 
\end{equation*}
where
\begin{description}
\item{(i)} 
\begin{equation}\label{original-p_k(x,y)-1}
j_k(x, y)= \frac{1}{v_k(x)}, \quad \text{if }x\in D^k_\eps\cup 2^{-k}\IZ_+, \,y\leftrightarrow x  \text{ in }G^k
\end{equation}
\item{(ii)}
\begin{equation}\label{original-p_k(x,y)-2}
j_k(a^*_k, y)=\left\{
    \begin{aligned}
         &\frac{1}{v_k(a^*_k)+2^{k+1}-1},  &y \in D^k_\eps,\, y\leftrightarrow x \text{ in }G^k;\\
        &\frac{2^{k+1}}{v_k(a_k^*)+2^{k+1}-1}, &y \in 2^{-k}\IZ_+,\, y\leftrightarrow x \text{ in }G^k.
    \end{aligned}
\right.
\end{equation}
\end{description}
\end{prop}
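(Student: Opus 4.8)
The plan is to invoke Theorem~\ref{DF-pure-jump}: I will exhibit a symmetrizing measure $\mathsf{m}_0$ and a probability kernel $Q$ on $E^k$ for which the Dirichlet form~\eqref{DF-EN} attached to $(Q,\mathsf{m}_0)$ coincides with $(\EE^k,\mathcal{D}(\EE^k))$ and for which the induced reference measure $\lambda^{-1}\mathsf{m}_0$ equals $m_k$ with $\lambda$ constant. Since a regular symmetric Dirichlet form determines its associated symmetric Hunt process up to equivalence, this identifies $X^k$ with the reversible pure jump process built from $(Q,\mathsf{m}_0,\lambda)$, whose speed function is $\lambda$ and whose road map is $J_k=Q$; reading off $\lambda$ and $Q$ then yields the proposition.

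First I would rewrite $\EE^k$ in conductance form. Each unoriented edge of $G^k_e$ contributes both of its orientations to the sums in~\eqref{DF-RWVD-form}, so that $\EE^k(f,g)=\tfrac12\sum_{(x,y)}\bigl(f(x)-f(y)\bigr)\bigl(g(x)-g(y)\bigr)C_k(x,y)$, where the sum runs over ordered pairs $(x,y)$ with $e_{xy}\in G^k_e$ and the symmetric conductance $C_k$ equals $\tfrac14$ when $e_{xy}$ joins two vertices of $D^k_\eps\cup\{a^*_k\}$ and equals $2^{k}/2$ when $e_{xy}$ joins two vertices of $2^{-k}\IZ_+\cup\{a^*_k\}$; here one uses that the two sums in~\eqref{DF-RWVD-form} range over disjoint, exhaustive families of edges, the only vertex shared by the two components being $a^*_k$, so that $C_k$ is well defined. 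I then set $\mathsf{m}_0(\{x\}):=\sum_{y}C_k(x,y)$ and $Q(x,\{y\}):=C_k(x,y)/\mathsf{m}_0(\{x\})$. By construction $Q$ is a probability kernel with $Q(x,\{x\})=0$; the identity $Q(x,\{y\})\mathsf{m}_0(\{x\})=C_k(x,y)=C_k(y,x)=Q(y,\{x\})\mathsf{m}_0(\{y\})$ gives~\eqref{symmetrizing-meas}; and substituting $(Q,\mathsf{m}_0)$ into the polarized form of~\eqref{DF-EN} returns exactly $\EE^k$.

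It then remains to evaluate $\mathsf{m}_0$ and the speed function $\lambda(x)=\mathsf{m}_0(\{x\})/m_k(\{x\})$ forced by the constraint $\lambda^{-1}\mathsf{m}_0=m_k$. For $x\in D^k_\eps$ every neighbor lies in $D^k_\eps\cup\{a^*_k\}$, so $\mathsf{m}_0(\{x\})=v_k(x)/4$ and $\lambda(x)=2^{2k}$; for $x\in 2^{-k}\IZ_+$ every neighbor lies in $2^{-k}\IZ_+\cup\{a^*_k\}$, so $\mathsf{m}_0(\{x\})=2^{k}v_k(x)/2$ and again $\lambda(x)=2^{2k}$. The only point requiring care is the darning vertex $a^*_k$: here I must use that $a^*_k$ has precisely one neighbor in $2^{-k}\IZ_+$ (the lattice point $2^{-k}$) and $v_k(a^*_k)-1$ neighbors in $D^k_\eps$ --- a finite number, by the area estimate already used in the proof of Proposition~\ref{P:2.1} --- so that $\mathsf{m}_0(\{a^*_k\})=(v_k(a^*_k)-1)/4+2^{k}/2$; dividing this by $m_k(\{a^*_k\})=2^{-k}/2+2^{-2k}(v_k(a^*_k)-1)/4$ once more yields $2^{2k}$. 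Hence $\lambda\equiv 2^{2k}$ and $\lambda^{-1}\mathsf{m}_0=m_k$. Finally, inserting these values of $\mathsf{m}_0$ into $j_k(x,y)=C_k(x,y)/\mathsf{m}_0(\{x\})$ gives $j_k(x,y)=1/v_k(x)$ for $x\in D^k_\eps\cup 2^{-k}\IZ_+$, and $j_k(a^*_k,y)=1/(v_k(a^*_k)+2^{k+1}-1)$ for $y\in D^k_\eps$ while $j_k(a^*_k,y)=2^{k+1}/(v_k(a^*_k)+2^{k+1}-1)$ for $y\in 2^{-k}\IZ_+$, which are exactly~\eqref{original-p_k(x,y)-1}--\eqref{original-p_k(x,y)-2}. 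I expect the only genuine obstacle to be this bookkeeping at $a^*_k$ --- keeping straight the coefficients $\tfrac18$ and $\tfrac{2^k}{4}$ under the oriented-edge convention together with the fact that $a^*_k$ sits in both sums of~\eqref{DF-RWVD-form}; everything else is routine arithmetic.
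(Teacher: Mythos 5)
Your proposal is correct and follows essentially the same route as the paper: both arguments invoke Theorem~\ref{DF-pure-jump} after identifying the edge conductances $\tfrac14$ and $\tfrac{2^k}{2}$, verifying that $\lambda_k^{-1}\mathsf{m}_0=m_k$ with $\lambda_k=2^{2k}$ constant (the paper simply defines $m_{0,k}:=\lambda_k m_k$ and checks $j_k(x,y)m_{0,k}(x)$ equals the conductance, which is the same computation read in the other direction), and both rely on the key observation that $a^*_k$ has exactly one neighbor in $2^{-k}\IZ_+$ and $v_k(a^*_k)-1$ neighbors in $D^k_\eps$. The arithmetic at $a^*_k$ in your write-up checks out.
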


\begin{proof}
Given \eqref{DF-RWVD-form}, it is immediate that
$(\EE^k, \mathcal{D}(\EE^{k}))$  on $L^2(E^k, m_k)$ is  a regular symmetric Dirichlet form. We define a measure $m_{0, k}:=\lambda_km_k(x)$ on $E^k$. In view of the definition of $m_k$ given in \eqref{def-mk},  $m_{0, k}$ can be expressed as
\begin{equation}\label{e:2.6}
m_{0,k}(x)=\left\{
    \begin{aligned}
         &\frac{v_k(x)}{4},  &x \in D^k_\eps;\\
        &\frac{2^k}{2}v_k(x), &x \in 2^{-k}\IZ_+;\\
        & \frac{2^{k}}{2}+\frac{1}{4}\left(v_k(x)-1\right), & x=a^*_k,
    \end{aligned}
\right.
\end{equation}
We note that in $G^k$,  $a^*_k$ has exactly one neighbor in $2^{-k}\IZ_+$ and thus $v_k(a_k^*)-1$ neighbors in $D^k_\eps$. Therefore, from \eqref{e:2.6} one may directly verify that for the $J_k$ a defined in the statement of the proposition,  it satisfies for any adjacent pair of $x, y\in E^k$ that
\begin{equation}\label{e:2.5}
 J_k(x, dy) m_{0,k}(x)= j_k(x, y) m_{0,k}(x)=\left\{
    \begin{aligned}
         &\frac{1}{4},  &x,y \in D^k_\eps\cup \{a^*_k\}, \, x\leftrightarrow y  \text{ in }G^k ;\\
        &\frac{2^{k}}{2}, &x,y \in 2^{-k}\IZ_+\cup \{a^*_k\},\, x\leftrightarrow y  \text{ in }G^k .
    \end{aligned}
\right.
\end{equation}
The desired conclusion now readily follows from \eqref{DF-RWVD-form}  and   Theorem \ref{DF-pure-jump}.
\end{proof}

To rigorously  introduce random walks in a  bounded domain $E_0^k$, we first consider  the following regular symmetric Dirichlet form on $L^2(E_0^k, \overline{m}_k)$, where $
\overline{m}_k$ is defined in \eqref{def-bar-mk}: 
\begin{align}\label{reflected-form}
\left\{
\begin{aligned}
&\mathcal{D}(\wt{\EE}^{k})= L^2(E_0^k, \overline{m}_k),
\\
&\wt{\EE}^{k}(f, f)= \frac{1}{8}\sum_{\substack{e^o_{xy}:\;e_{xy}\in G^k_{0, e},\\ x,y\in D^k_\eps\cup \{a_k^*\} }}\left(f(x)-f(y)\right)^2 +\frac{2^k}{4}\sum_{\substack{e^o_{xy}:\;e_{xy}\in G^k_{0, e},\\ x,y\in 2^{-k}\IZ_+\cup \{a_k^*\} }}\left(f(x)-f(y)\right)^2.
\end{aligned}
\right.
\end{align}
In the next proposition we  give the road map  of $\wt{X}^k$, which implies that $\wt{X}^k$ behaves like ``reflected" simple random walk on  $\partial E^k_0$.  Furthermore, one can infer from the following proposition that $\wt{X}^k$ is conservative, i.e., being of an infinite lifetime.

\begin{prop}\label{jump-distribution}
$(\wt{\EE}^k, \mathcal{D}(\wt{\EE}^{k}))$   is a regular symmetric Dirichlet form on $L^2(E_0^k, \overline{m}_k)$.    Therefore there is an $\overline{m}_k$-symmetric pure jump process on $E_0^k$ associated with it.   Denote this process by $\wt{X}^k$.  $\wt{X}^k$ travels with constant   speed $\lambda_k=2^{2k}$ and has  road map
\begin{equation*}
\wt{J}_k(x, dy)=\sum_{x\in E_0^k, z\leftrightarrow x \text{ in  } G_0^k}\wt{j}_k(x,z)\delta_{\{z\}}(dy), 
\end{equation*}
where
\begin{description}
\item{(i)} 
\begin{equation}\label{p_k(x,y)-1}
\wt{j}_k(x, y)= \frac{1}{\bar{v}_k(x)}, \quad \text{if }x\in D^k_\eps\cup 2^{-k}\IZ_+, \,y\leftrightarrow x \text{ in }G^k_0.
\end{equation}
\item{(ii)}
\begin{equation}\label{p_k(x,y)-2}
\wt{j}_k(a^*_k, y)=\left\{
    \begin{aligned}
         &\frac{1}{\bar{v}_k(a^*_k)+2^{k+1}-1},  &y \in D^k_\eps,\, y\leftrightarrow x \text{ in }G^k_0;\\
        &\frac{2^{k+1}}{\bar{v}_k^*(a_k^*)+2^{k+1}-1}, &y \in 2^{-k}\IZ_+,\, y\leftrightarrow x \text{ in }G^k_0.
    \end{aligned}
\right.
\end{equation}
\end{description}
\end{prop}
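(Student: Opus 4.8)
The plan is to mirror the proof of Proposition~\ref{jump-distribution-Xk} almost verbatim, the only change being that the ambient graph $G^k$ is replaced by the bounded graph $G^k_0$, the vertex degrees $v_k(x)$ are replaced by $\bar v_k(x)$, and the reference measure $m_k$ is replaced by $\overline m_k$. The first step is to observe that $(\wt\EE^k,\mathcal D(\wt\EE^k))$ is a regular symmetric Dirichlet form on $L^2(E_0^k,\overline m_k)$: this is immediate because $E_0^k$ is a finite set (for each fixed $k$ the bounded Lipschitz domain $E_0$ contains only finitely many lattice points), $\mathcal D(\wt\EE^k)=L^2(E_0^k,\overline m_k)$ is finite-dimensional, the quadratic form is a finite sum of squares hence nonnegative and closed, and it is Markovian since it is a sum of terms of the form $(f(x)-f(y))^2$ which decrease under unit contractions. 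Regularity is automatic on a discrete state space. Hence by the general theory there is an $\overline m_k$-symmetric strong Markov (pure jump) process $\wt X^k$ associated with it.

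The second step is to put the form into the shape~\eqref{DF-EN} of Theorem~\ref{DF-pure-jump}. Define the symmetrizing measure $\overline m_{0,k}:=\lambda_k\overline m_k=2^{2k}\overline m_k$, which by \eqref{def-bar-mk} is
\begin{equation*}
\overline m_{0,k}(x)=\left\{
\begin{aligned}
&\frac{\bar v_k(x)}{4}, & x\in D^k_\eps;\\
&\frac{2^k}{2}\bar v_k(x), & x\in 2^{-k}\IZ_+;\\
&\frac{2^k}{2}+\frac14\big(\bar v_k(x)-1\big), & x=a^*_k.
\end{aligned}
\right.
\end{equation*}
As in Proposition~\ref{jump-distribution-Xk} one uses that in $G^k_0$ the vertex $a^*_k$ has exactly one neighbor in $2^{-k}\IZ_+$ (this is where the standing assumption $\rho(a^*,\partial E_0)>16\eps$ and $2^{-k}<\eps/4$ enter, guaranteeing the relevant part of the $1$-dimensional ray near $a^*_k$ lies inside $E_0$) and hence $\bar v_k(a^*_k)-1$ neighbors in $D^k_\eps$. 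Then one checks directly, exactly as in \eqref{e:2.5}, that with $\wt J_k$ as defined in the statement,
\begin{equation*}
\wt j_k(x,y)\,\overline m_{0,k}(x)=\left\{
\begin{aligned}
&\tfrac14, & x,y\in D^k_\eps\cup\{a^*_k\},\ x\leftrightarrow y\text{ in }G^k_0;\\
&\tfrac{2^k}{2}, & x,y\in 2^{-k}\IZ_+\cup\{a^*_k\},\ x\leftrightarrow y\text{ in }G^k_0,
\end{aligned}
\right.
\end{equation*}
which is symmetric in $x,y$; in particular \eqref{symmetrizing-meas} holds for $\overline m_{0,k}$. Summing these identities over oriented edges and comparing with~\eqref{reflected-form} shows that $\wt\EE^k(f,g)=\frac12\sum_{x,y}(f(x)-f(y))(g(x)-g(y))\wt J_k(x,dy)\overline m_{0,k}(x)$, i.e. $\wt\EE^k$ is exactly the Dirichlet form~\eqref{DF-EN} with speed $\lambda_k=2^{2k}$, road map $\wt J_k$, and symmetrizing measure $\overline m_{0,k}$. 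Applying Theorem~\ref{DF-pure-jump} then identifies $\wt X^k$ as the pure jump process with constant speed $2^{2k}$ and road map $\wt J_k$, giving (i) and (ii). Finally, conservativeness follows because $E_0^k$ is a finite set and the jump rates are bounded, so the process cannot explode and has infinite lifetime.

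I do not expect any serious obstacle here; the proposition is essentially a bookkeeping exercise paralleling Proposition~\ref{jump-distribution-Xk}. The one point requiring a little care is the normalization at the darning point $a^*_k$ — verifying that $\wt j_k(a^*_k,\cdot)$ genuinely is a probability kernel, i.e. that $(\bar v_k(a^*_k)-1)\cdot\frac{1}{\bar v_k(a^*_k)+2^{k+1}-1}+1\cdot\frac{2^{k+1}}{\bar v_k(a^*_k)+2^{k+1}-1}=1$, using that $a^*_k$ has exactly one $1$-dimensional neighbor — and confirming that the $1$-dimensional edge weight $2^k/4$ in \eqref{reflected-form}, combined with $\overline m_{0,k}$ on the ray, reproduces the factor $2^{k+1}$ in~\eqref{p_k(x,y)-2} rather than $2^k$. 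This is the same computation as in the unbounded case and goes through identically once one tracks the factor-of-two conventions coming from the oriented-edge sum.
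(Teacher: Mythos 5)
Your proposal is correct and is exactly the argument the paper intends: the paper's own proof of this proposition simply states that it is identical to that of Proposition~\ref{jump-distribution-Xk}, and you have carried out that same computation with $G^k$, $v_k$, $m_k$ replaced by $G^k_0$, $\bar v_k$, $\overline m_k$, including the verification of \eqref{symmetrizing-meas} for $\overline m_{0,k}=2^{2k}\overline m_k$ and the application of Theorem~\ref{DF-pure-jump}. The extra checks you flag (the normalization at $a^*_k$ and conservativeness on the finite state space) are consistent with the paper's remarks surrounding the proposition.
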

\begin{proof}
The proof is  identical to that of Proposition \ref{jump-distribution-Xk}, therefore omitted. 
\end{proof}

To describe the ``coordinates" in $E^k$ as a subspace embedded in $\IR^3$, we consider the following projection maps $f_i: E^k\rightarrow \IR_+$:
\begin{equation}\label{def-f2}
f_1(x)=\left\{
    \begin{aligned}
         & |x|_\rho\cdot\frac{x_1}{|x|},  &\text{if } x\in D^k_\eps;\\
        & 0, & \text{otherwise};
    \end{aligned}
\right.
\end{equation}
and 
\begin{equation}\label{def-f3}
f_2(x)=\left\{
    \begin{aligned}
         &  |x|_\rho \cdot \frac{x_2}{|x|},  &\text{if } x\in D^k_\eps;\\
        & 0, & \text{otherwise};
    \end{aligned}
\right.
\end{equation}
and 
\begin{equation}\label{def-f1}
f_3(x)=\left\{
    \begin{aligned}
         & |x|,  &\text{if } x\in 2^{-k}\IZ_+;\\
        & 0, & \text{otherwise};
    \end{aligned}
\right.
\end{equation}

Before we prove the tightness of $\wt{X}^k$, we first record the following inequality which is proved using elemetary geometry which  will be used later in this Section. 
\begin{lem}\label{P:3.4}
For $i=1,2,3$, it holds for every pair of $x,y$ in $D^k_\eps$ such that $x\leftrightarrow y $   in $G^k$
\begin{equation}\label{eq:P3.4}
|f_i(x)-f_i(y)|\le 9\cdot 2^{-k}.
\end{equation}
\end{lem}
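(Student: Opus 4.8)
The plan is to estimate $|f_i(x)-f_i(y)|$ separately according to the location of the adjacent pair $x,y\in D^k_\eps$ relative to the hole $B_\eps$. Fix $i\in\{1,2,3\}$. Since $f_3$ vanishes identically on $D^k_\eps$, the claim is trivial for $i=3$, so assume $i\in\{1,2\}$. The only subtlety is that the map $x\mapsto |x|_\rho\cdot x_i/|x|$ is \emph{not} globally Lipschitz with a good constant near $\partial B_\eps$ when measured against the Euclidean distance $|x-y|=2^{-k}$, because $x_i/|x|$ can swing by an amount of order $2^{-k}/\eps$; but $|x|_\rho$ is small there, so the product stays controlled.

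First I would treat the ``far'' case: if both $x,y$ lie outside $B(0,2\eps)$, then on that region the function $x\mapsto |x|_\rho\cdot x_i/|x| = (|x|-\eps)\,x_i/|x| = x_i - \eps x_i/|x|$ has gradient bounded by a universal constant (indeed $|\nabla(x_i)|=1$ and $|\nabla(\eps x_i/|x|)|\le \eps(1/|x| + 1/|x|)\le 1$ once $|x|\ge 2\eps$), so by the mean value theorem along the segment $[x,y]$ (which stays in the convex region $|z|\ge 2\eps - 2^{-k}\ge \eps$) we get $|f_i(x)-f_i(y)|\le 2\cdot 2^{-k}$, well within the bound.

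Next I would treat the ``near'' case: $x$ or $y$ lies in $B(0,2\eps)$, hence (using $2^{-k}<\eps/4$) both $x$ and $y$ lie in $B(0,2\eps+2^{-k})\setminus B_\eps$, so $\eps\le |x|,|y|\le 3\eps$ and $|x|_\rho=|x|-\eps\le 2\eps$, and similarly for $y$. Write $f_i(x)-f_i(y) = |x|_\rho(x_i/|x|) - |y|_\rho(y_i/|y|)$ and split it as $(|x|_\rho-|y|_\rho)\frac{x_i}{|x|} + |y|_\rho\big(\frac{x_i}{|x|}-\frac{y_i}{|y|}\big)$. The first term is bounded by $\big||x|-|y|\big|\le|x-y|=2^{-k}$ since $|x_i/|x||\le 1$. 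For the second term, $\big|\frac{x_i}{|x|}-\frac{y_i}{|y|}\big|$ is the Euclidean distance between the two unit vectors $x/|x|$ and $y/|y|$, componentwise, and an elementary estimate gives $\big|\frac{x}{|x|}-\frac{y}{|y|}\big|\le \frac{2|x-y|}{\max(|x|,|y|)}\le \frac{2\cdot 2^{-k}}{\eps}$; multiplying by $|y|_\rho\le 2\eps$ yields $\le 4\cdot 2^{-k}$. Adding the two contributions gives $|f_i(x)-f_i(y)|\le 5\cdot 2^{-k}\le 9\cdot 2^{-k}$, completing this case and hence the lemma. (One should also note the degenerate subcase where the segment $[x,y]$ would cross into $B_\eps$; but since $e_{xy}\cap B_\eps=\emptyset$ is part of the hypothesis $x\leftrightarrow y$ in $G^k$, the edge stays in $D_\eps$ and no such issue arises.)

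The main obstacle is purely bookkeeping: pinning down the unit-vector inequality $\big|\frac{x}{|x|}-\frac{y}{|y|}\big|\le \frac{2|x-y|}{\max(|x|,|y|)}$ with an honest constant, and making sure the case split exhausts all adjacent pairs in $D^k_\eps$. Everything else is the triangle inequality and the a priori bounds $|x|_\rho\le 2\eps$, $|x|\ge\eps$ valid in the near region. I expect the final constant to come out considerably smaller than $9$, so there is plenty of slack; I would simply keep the statement as is for use in the tightness argument later in the section.
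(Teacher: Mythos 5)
Your proof is correct, but it is organized rather differently from the paper's, which runs a single global computation with no case split: writing $f_1(x)-f_1(y)=x_1-y_1-\eps\left(\frac{x_1}{|x|}-\frac{y_1}{|y|}\right)$ and bounding the second piece by an add--subtract trick together with the two facts $|x_1|\le |x|$ and $|y|\ge \eps$ (valid everywhere on $D_\eps$), arriving at $3\cdot 2^{-k}$. Your near-case estimate is essentially that same computation in disguise — your unit-vector inequality $\left|\frac{x}{|x|}-\frac{y}{|y|}\right|\le \frac{2|x-y|}{\max(|x|,|y|)}$ is exactly the paper's add--subtract step — and in fact it already works globally: since $\frac{|y|_\rho}{\max(|x|,|y|)}=\frac{|y|-\eps}{|y|}\le 1$ when $|y|\ge|x|$ (and symmetrically otherwise), your second term is bounded by $2\cdot 2^{-k}$ on all of $D_\eps$, with no need for the a priori bounds $|y|_\rho\le 2\eps$ from the near region. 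So the far case and the mean-value-theorem argument are superfluous. Two small blemishes there, neither fatal given the slack up to $9\cdot 2^{-k}$: the set $\{|z|\ge 2\eps-2^{-k}\}$ is not convex (the segment stays in it by the triangle inequality, which is what you actually need), and your gradient bound $|\nabla(\eps x_i/|x|)|\le 1$ holds for $|z|\ge 2\eps$ but points of the segment only satisfy $|z|\ge 2\eps-2^{-k}$, so the honest constant there is $2\eps/(2\eps-2^{-k})\le 8/7$ rather than $1$. The statement and its use later in the tightness argument are unaffected.
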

\begin{proof}
The  inequality is trivial when $i=3$. For $i=1$ or $2$, without loss of generality, we prove the result for $i=1$. Recall that it is assumed in \S 1 that $2^{-k}< \eps/4$. By elementary algebra,
\begin{align*}
\left|f_1(x)-f_1(y)\right|&=\left|\frac{(|x|-\eps)}{|x|}\cdot x_1-\frac{(|y|-\eps)}{|y|}\cdot y_1\right|
\\
&=\left|x_1-y_1-\eps \left(\frac{x_1}{|x|}-\frac{y_1}{|y|}\right)\right|
\\
&\le \left|x_1-y_1\right|+\eps \left|\frac{x_1}{|x|}-\frac{y_1}{|y|}\right|
\\
&\le \left|x_1-y_1\right|+\eps\left|\frac{x_1}{|x|}-\frac{x_1}{|y|}+\frac{x_2-y_2}{|y|}\right|
\\
&\le \left|x_1-y_1\right|+\frac{\eps}{|y|}\cdot \left|x_1-y_1\right|+\eps\cdot |x_1|\cdot \left|\frac{1}{|x|}-\frac{1}{|y|}\right|
\\
&\le 2|x_1-y_1|+\eps\cdot |x_1|\cdot \frac{\left||x|-|y|\right|}{|x|\cdot |y|}
\\
&\le  2\cdot 2^{-k}+\eps\cdot |x_1|\cdot \frac{2^{-k}}{|x|\cdot |y|} \le 3\cdot 2^{-k},
\end{align*}
where the second last inequality is due to the fact that  the Euclidean norm $|y|\ge \eps$. The proof is thus complete. 
\end{proof}

Roughly speaking, the following lemma states that  for any $x, y\in E$, if the Euclidean distance between $(f_1(x), f_2(x), f_3(x))$ and $(f_1(y), f_2(y), f_3(y))$ is small, then so is $\rho(x,y)$. The proof is elementary but technical. 
\begin{lem}\label{P:3.5}
There exist constants $C_1$ and $C_2$ such that
\begin{equation}\label{e:5.3}
|x|_\rho\le C_1\left|\left(f_1(x), f_2(x), f_3(x)\right)\right|, \quad \text{for all  }x\in E^k,
\end{equation}
and 
\begin{equation}\label{e:5.4}
\rho(x, y)\wedge \rho(x,y)^2\le C_2\left|\left(f_1(x)-f_1(y), f_2(x)-f_2(y), f_3(x)-f_3(y)\right)\right|, \quad \text{for }x,y\in E^k.
\end{equation}
\end{lem}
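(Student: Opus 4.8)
The plan rests on reading off what the map $\Phi:=(f_1,f_2,f_3)\colon E^k\to\IR^3$ does geometrically. On the sheet $D_\eps$ it is the radial contraction $x\mapsto x\,(1-\eps/|x|)$, which collapses the circle $\partial B_\eps$ to the origin and is a homeomorphism of $D_\eps$ onto $\IR^2\times\{0\}$; on the pole $2^{-k}\IZ_+$ it is the isometry onto the vertical half-axis; and it sends $a^*_k$ to the origin. Consequently $|\Phi(x)|=|x|_\rho$ for every $x\in E^k$ — one checks the three cases $x\in D_\eps$, $x\in 2^{-k}\IZ_+$, $x=a^*_k$ directly from \eqref{def-f1}--\eqref{def-f2} — which is \eqref{e:5.3} with $C_1=1$ (indeed with equality).

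For \eqref{e:5.4} I would first dispose of the cases in which $x$ or $y$ lies on the pole, including $a^*_k$. Writing $s_x:=|x|_\rho$ and $s_y:=|y|_\rho$: if both lie on the pole then $\Phi$ restricted there is an isometry, so $\rho(x,y)=|s_x-s_y|=|\Phi(x)-\Phi(y)|$; if one lies on the pole and the other on $D_\eps$ (or equals $a^*_k$), then by \eqref{e:1.1} $\rho(x,y)=s_x+s_y$, while $\Phi(x)$ and $\Phi(y)$ lie on two perpendicular rays from the origin of lengths $s_x$ and $s_y$, so $|\Phi(x)-\Phi(y)|^2=s_x^2+s_y^2\ge\tfrac12(s_x+s_y)^2=\tfrac12\rho(x,y)^2$. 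Either way \eqref{e:5.4} holds with $C_2=\sqrt2$, and we are reduced to $x,y\in D_\eps$.

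For $x,y\in D_\eps$ I would pass to polar coordinates, writing $r_x=s_x+\eps\le r_y=s_y+\eps$ (without loss of generality) and letting $\theta\in[0,\pi]$ be the angle between $x$ and $y$. The law of cosines then yields the two identities
\begin{align*}
|\Phi(x)-\Phi(y)|^2 &= (s_y-s_x)^2 + 4\,s_x s_y\sin^2(\theta/2),\\
|x-y|^2 &= (s_y-s_x)^2 + 4\,(s_x+\eps)(s_y+\eps)\sin^2(\theta/2),
\end{align*}
together with $\rho(x,y)=|x-y|\wedge(s_x+s_y)$ from \eqref{e:1.1}. I would then split on whether the ``radial'' or the ``angular'' term dominates $|x-y|^2$. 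If $4(s_x+\eps)(s_y+\eps)\sin^2(\theta/2)\le(s_y-s_x)^2$, then $|x-y|\le\sqrt2\,|s_y-s_x|\le\sqrt2\,|\Phi(x)-\Phi(y)|$, and since $\rho(x,y)\le|x-y|$ we are done with $C_2=\sqrt2$. In the complementary (``angular'') regime, I would further use whichever of $|x-y|$ or $s_x+s_y$ attains $\rho(x,y)$: a short manipulation of the two identities shows that this case hypothesis forces $s_x$ and $s_y$ to be comparable up to an absolute factor, and — in the subcase $\rho(x,y)=s_x+s_y$ — also forces $\min(s_x,s_y)\le c(\eps)$. These are exactly the constraints that offset the degeneracy of the radial contraction near $\partial B_\eps$, where $\Phi$ shrinks the angular direction by a factor of order $s_x/\eps$. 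Feeding the bound $\sin^2(\theta/2)\le|\Phi(x)-\Phi(y)|^2/(4s_xs_y)$ (from the first identity) into these relations, one estimates $|x-y|^2$, respectively $(s_x+s_y)^2$, by a constant multiple of $|\Phi(x)-\Phi(y)|$ whenever $\rho(x,y)\le1$, and estimates $|x-y|$, respectively $s_x+s_y$, by a constant multiple of $|\Phi(x)-\Phi(y)|$ directly whenever $\rho(x,y)>1$; taking $C_2$ to be the largest of the finitely many constants thus produced, each depending only on $\eps$, completes the proof.

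The only genuine difficulty is the bookkeeping in the angular regime: one must carefully extract, from the combination ``the angular term dominates $|x-y|^2$'' and ``$\rho(x,y)$ is attained at $s_x+s_y$ (resp.\ at $|x-y|$)'', the quantitative relations $s_x\asymp s_y$ and $\min(s_x,s_y)\lesssim_\eps 1$ that rescue the estimate, and to remember to compare with $\rho(x,y)^2$ rather than $\rho(x,y)$ precisely in the small-$\rho$ / near-$\partial B_\eps$ situations — which is what the ``$\wedge\,\rho(x,y)^2$'' in the statement is there for. Everything beyond that is the law of cosines and elementary inequalities, which is presumably why the authors describe the argument as ``elementary but technical.''
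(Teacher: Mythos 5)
Your reduction to the two-dimensional case, the identity $|(f_1(x),f_2(x),f_3(x))|=|x|_\rho$ giving \eqref{e:5.3} with $C_1=1$, the $\sqrt2$-treatment of the pole cases, and the two law-of-cosines identities are all correct and match the paper's computation (the paper writes the same identities as $\cos(\beta-\alpha)(r_x-r_y)^2+(1-\cos(\beta-\alpha))[\cdots]$ rather than via $\sin^2(\theta/2)$, and splits on the size of $\cos(\beta-\alpha)$ rather than on which term dominates $|x-y|^2$, but the radial-dominant and large-angle portions of the two arguments are interchangeable). The gap is in your angular-dominant regime, which is exactly where the paper's real work (its Case 3) lies. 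First, the structural claim you lean on is false: angular dominance does not force $s_x\asymp s_y$. Take $x\in\partial B_\eps$ (so $s_x=0$) and $s_y=\eps$ with $\theta=\pi/2$; then the angular term of $|x-y|^2$ is $4r_xr_y\sin^2(\theta/2)=4\cdot\eps\cdot2\eps\cdot\tfrac12=4\eps^2$ against the radial term $\eps^2$, so the angular term dominates while $s_x/s_y=0$. Second, the mechanism you propose --- substituting $\sin^2(\theta/2)\le|\Phi(x)-\Phi(y)|^2/(4s_xs_y)$, with $\Phi:=(f_1,f_2,f_3)$, into the identity for $|x-y|^2$ --- introduces the factor $(s_x+\eps)(s_y+\eps)/(s_xs_y)$, which is unbounded precisely when one of the points approaches $\partial B_\eps$, i.e.\ precisely in the regime where the estimate is delicate and the ``$\wedge\,\rho^2$'' is needed. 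So the chain of inequalities does not close as described.

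What does close it (and is what the paper does in its Case 3) is a split on $r_x+r_y$ against $8\eps$ rather than on $\rho$ against $1$. When $r_x+r_y\ge8\eps$, the larger radius is at least $4\eps$, so $(r_x-\eps)^2+(r_y-\eps)^2\ge\tfrac18(r_x^2+r_y^2)$ and hence $|\Phi(x)-\Phi(y)|^2\ge\tfrac18\rho(x,y)^2$ directly from the two identities; no comparability of $s_x$ and $s_y$ is needed. When $r_x+r_y\le8\eps$, all of $(r_x-r_y)^2$, $r_x^2+r_y^2$ and $(r_x-\eps)^2+(r_y-\eps)^2$ are $O(\eps^2)$, and the elementary inequality $ab\ge a^2\wedge b^2$ applied to the product $(1-\cos(\beta-\alpha))\cdot[(r_x-\eps)^2+(r_y-\eps)^2]$ turns $|\Phi(x)-\Phi(y)|^2$ into a lower bound of the form $c\,\rho(x,y)^4$, i.e.\ $\rho(x,y)^2\le C|\Phi(x)-\Phi(y)|$. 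I suggest replacing the angular-dominant bookkeeping in your sketch with this two-step argument; the remaining cases of your proposal can stand as written.
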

\begin{proof}
To prove   \eqref{e:5.3}, we consider the two cases that $x\in \IR_+$ and $x\in D_\eps$. \eqref{e:5.3} is obviously true when $x\in \IR_+$. In this proof we denote the Euclidean coordinates of $x\in D_\eps$  by $(x_1, x_2, 0)$.    When $x\in D_\eps$,
\begin{equation*}
\left|\left(f_1(x), f_2(x), f_3(x)\right)\right|=\left(|x|_\rho^2\cdot \frac{x_1^2}{|x|^2}+|x|_\rho^2\cdot \frac{x_2^2}{|x|^2}+0^2\right)^{1/2}=|x|_\rho.
\end{equation*}
To show \eqref{e:5.4},  we first note that when at least one of $x$ and $y$ is in $\IR_+$, it holds 
\begin{equation*}
\rho(x, y)=\left|\left(f_1(x)-f_1(y), f_2(x)-f_2(y), f_3(x)-f_3(y)\right)\right|.
\end{equation*}
When both  $x, y \in D_\eps$,  we express the Euclidean coordinates of $x$ and $y$ in $\IR^3$ by $(r_x\cos\alpha, r_x\sin\alpha, 0)$ and $(r_y\cos \beta, r_y\sin\beta, 0)$, respectively. We note that on $D_\eps$, it holds 
\begin{equation*}
\rho(x,y)=\left(|x|_\rho+|y|_\rho\right)\wedge |x-y|.
\end{equation*}
Therefore, using the elementary formula for the distance between two points  in terms of polar coordinates, 
\begin{equation}\label{expression-rho-xy}
\rho(x,y)=\left[(r_x-\eps)+(r_y-\eps)\right]\wedge \sqrt{r_x^2+r_y^2-2r_xr_y\cos(\beta-\alpha)}.
\end{equation}
For the right hand side of \eqref{e:5.4}, we have
\begin{equation*}
\left(f_1(x), f_2(x), f_3(x)\right)=\left(\frac{r_x-\eps}{r_x}x_1, \frac{r_x-\eps}{r_x}x_2, 0\right)= \left((r_x-\eps)\cos\alpha, (r_x-\eps)\sin\alpha, 0\right)
\end{equation*}
and
\begin{equation*}
 \left(f_1(y), f_2(y), f_3(y)\right)=\left(\frac{r_y-\eps}{r_y}y_1, \frac{r_y-\eps}{r_y}y_2, 0\right)=\left((r_y-\eps)\cos\beta, (r_y-\eps)\sin\beta, 0\right).
\end{equation*}
Again it follows from the distance formula in polar coordinates that
\begin{eqnarray}
&&\left|\left(f_1(x)-f_1(y), f_2(x)-f_2(y), f_3(x)-f_3(y)\right)\right| \nonumber
\\
&=&\sqrt{(r_x-\eps)^2+(r_y-\eps)^2-2(r_x-\eps)(r_y-\eps)\cos(\beta-\alpha).}\label{e:3.4}
\end{eqnarray}
From here, we further divide our discussion into three cases depending on $\cos(\beta -\alpha)$.
\\
{\it Case 1. } $\cos(\beta-\alpha)\le 0$. It can easily be seen that
\begin{eqnarray}
&&\left|\left(f_1(x)-f_1(y), f_2(x)-f_2(y), f_3(x)-f_3(y)\right)\right| \nonumber
\\
&\ge & \sqrt{(r_x-\eps)^2+(r_y-\eps)^2} \ge \frac{1}{2}\left[(r_x-\eps)+(r_y-\eps)\right]\ge \frac{1}{2}\rho(x,y). \label{e:3.5}
\end{eqnarray}
{\it Case 2. }  $0<\cos(\beta-\alpha) \le 3/4$. In this case, 
\begin{eqnarray}
&&\left|\left(f_1(x)-f_1(y), f_2(x)-f_2(y), f_3(x)-f_3(y)\right)\right| \nonumber
\\
&=&\sqrt{(r_x-\eps)^2+(r_y-\eps)^2-2(r_x-\eps)(r_y-\eps)\cos(\beta-\alpha)} \nonumber
\\
&=& \sqrt{\cos(\beta-\alpha)\cdot (r_x-r_y)^2+(1-\cos(\beta -\alpha))\left[(r_x-\eps)^2+(r_y-\eps)^2\right]} \nonumber
\\
&\ge & \sqrt{1-\cos(\beta -\alpha)}\cdot \sqrt{(r_x-\eps)^2+(r_y-\eps)^2} \nonumber
\\
\eqref{expression-rho-xy}&\ge& \frac{1}{4}\cdot \frac{1}{2}\left[(r_x-\eps)+(r_y-\eps)\right]\ge \frac{1}{8} \rho(x, y). \label{e:3.6}
\end{eqnarray}
{\it Case 3. }  $\cos(\beta-\alpha) > 3/4$. In view of \eqref{expression-rho-xy}, we have
\begin{align}
\rho(x,y)^2&=\left[r_x^2+r_y^2-2r_xr_y\cos(\beta-\alpha)\right]\wedge \left[(r_x-\eps)+(r_y-\eps)\right]^2 \nonumber
\\
&=\left[\cos(\beta-\alpha)\cdot (r_x-r_y)^2+(1-\cos(\beta-\alpha))(r_x^2+r_y^2)\right]\wedge \left[(r_x-\eps)+(r_y-\eps)\right]^2  \label{e:3.7}
\end{align}
From  \eqref{e:3.4}, we have 
\begin{eqnarray}
&&\left|\left(f_1(x)-f_1(y), f_2(x)-f_2(y), f_3(x)-f_3(y)\right)\right| \nonumber
\\
&=& \sqrt{\cos(\beta-\alpha)(r_x-r_y)^2+(1-\cos(\beta -\alpha))\left[(r_x-\eps)^2+(r_y-\eps)^2\right]}  \label{e:3.8}
\end{eqnarray}
To proceed, we further divide our discussion into two subcases. 
\\
{\it Subcase 1. } $r_x+r_y \ge 8\eps$. In this case, $\max\{r_x, r_y\}\ge 4\eps$. Without loss of generality, we assume $r_x\ge r_y$, which implies that $r_x\ge 4\eps$.  Then it is easy to see
\begin{equation}\label{e:3.14}
\frac{1}{8}\left(r_x^2+r_y^2\right)\le \frac{1}{4}r_x^2\le (r_x-\eps)^2\le  (r_x-\eps)^2+ (r_y-\eps)^2.
\end{equation}
Therefore,
\begin{align*}
\rho(x,y)^2&\stackrel{\eqref{e:3.7}}{\le} \left[\cos(\beta-\alpha)\cdot (r_x-r_y)^2+(1-\cos(\beta-\alpha))(r_x^2+r_y^2)\right]
\\
&\stackrel{\eqref{e:3.14}}{\le} 8 \bigg[\cos(\beta-\alpha)\cdot (r_x-r_y)^2+\Big(1-\cos(\beta-\alpha)\Big)\Big((r_x-\eps)^2+(r_y-\eps)^2\Big)\bigg]
\\
&\stackrel{\eqref{e:3.8}}{=} 8 \left|\left(f_1(x)-f_1(y), f_2(x)-f_2(y), f_3(x)-f_3(y)\right)\right|^2.
\end{align*}
{\it Subcase 2.}   $r_x+r_y \le 8\eps$. In this case, $\eps\le r_x, r_y\le 7\eps$. Thus  it holds
\begin{equation}\label{e:3.15}
 r_x^2+r_y^2\le 2\cdot 7^2\eps^2, \quad (r_x-r_y)^2\le 7^2\eps^2,\quad \text{and } (r_x-\eps)^2+(r_y-\eps)^2\le 2\cdot 7^2\eps^2.
\end{equation}
Using the elementary inequality 
\begin{equation}\label{elementary-ineq-1}
xy\ge \left(x^2 \wedge y^2\right),\quad \text{if }x,y>0,
\end{equation}
we have that there exists some $c_1>0$ (whose exact value below may differ from line to line)  such that
\begin{eqnarray}
&&\left|\left(f_1(x)-f_1(y), f_2(x)-f_2(y), f_3(x)-f_3(y)\right)\right|^2  \nonumber
\\
&\stackrel{\eqref{e:3.8}}{=}&   \cos(\beta-\alpha)(r_x-r_y)^2+(1-\cos(\beta -\alpha))\left[(r_x-\eps)^2+(r_y-\eps)^2\right]   \nonumber
\\
 &\stackrel{\eqref{elementary-ineq-1}}{\ge}&  \cos(\beta-\alpha)(r_x-r_y)^2+ \left(1-\cos(\beta -\alpha)\right)^2\wedge \left[(r_x-\eps)^2+(r_y-\eps)^2\right]^2\nonumber
 \\
 &\stackrel{\eqref{e:3.15}}{\ge} &  \cos(\beta-\alpha)(r_x-r_y)^2+ \left[\frac{1}{\left(2\cdot 49\eps^2\right)^2}\left(1-\cos(\beta -\alpha)\right)^2\left(r_x^2+r_y^2\right)^2\right]\wedge \left[(r_x-\eps)^2+(r_y-\eps)^2\right]^2\nonumber
 \\
 &\stackrel{\eqref{e:3.15}}{\ge}  &c_1\bigg[\left(\cos(\beta-\alpha)\left(r_x-r_y\right)^2\right)^2\wedge\left[(r_x-\eps)^2+(r_y-\eps)^2\right]^2 \nonumber
 \\
 &+& \left(1-\cos(\beta -\alpha)\right)^2\left(r_x^2+r_y^2\right)^2\wedge \left[(r_x-\eps)^2+(r_y-\eps)^2\right]^2\bigg]\nonumber
 \\
 &\ge &  c_1 \cdot \max\left\{\left(\cos(\beta-\alpha)(r_x-r_y)^2\right)^2, \left(1-\cos(\beta -\alpha)\right)^2\left(r_x^2+r_y^2\right)^2 \right\}   \wedge  \left[(r_x-\eps)^2+(r_y-\eps)^2\right]^2\nonumber
 \\
 &\ge & \frac{c_1}{4} \cdot \left[\left(\cos(\beta-\alpha)(r_x-r_y)^2\right)^2+\left(1-\cos(\beta -\alpha)\right)^2\left(r_x^2+r_y^2\right)^2 \right]\wedge  \left[(r_x-\eps)^2+(r_y-\eps)^2\right]^2\nonumber
 \\
 &\stackrel{\eqref{e:3.7}}{\ge }& c_1\cdot \rho (x, y)^4.
\end{eqnarray}
Combining the two subcases above, we conclude that when $\cos(\beta-\alpha)>3/4$, there exists some $c_2>0$ such that
\begin{equation*}
\left|\left(f_1(x)-f_1(y), f_2(x)-f_2(y), f_3(x)-f_3(y)\right)\right|^2 \ge  c_2\min\left\{\rho(x,y), \, \rho(x, y)^2\right\},
\end{equation*}
which proves the desired result for Case 3. The proof is complete by combining Cases 1-3.
\end{proof}

In the next lemma, before establishing the tightness of the laws of $(\wh{X}^k)_{k\ge 1}$,  we  first show the tightness of $\{\wt{X}^k, \IP^{\overline{m}_k};k\ge 1\}$.   We note that unlike $\wh{X}^k$, $\wt{X}^k$ has infinite lifetime, therefore $\overline{m}_k$ is the  invariant measure of $\wt{X}^k$.

\begin{lem}\label{L:3.4}
For every $T>0$, the laws of $\{\wt{X}^k, \IP^{\overline{m}_k}, k\ge 1\}$ are C-tight in the space $\mathbf{D}([0, T], \overline{E}_0, \rho)$ equipped with the Skorokhod topology. 
\end{lem}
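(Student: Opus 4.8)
The plan is to prove C-tightness of $\{\wt{X}^k, \IP^{\overline{m}_k}\}_{k \ge 1}$ by first transferring the problem to Euclidean space via the projection maps $(f_1, f_2, f_3)$, where the processes become (essentially) reflected random walks on a bounded region of $\IR^3$, and then pulling the tightness back to the geodesic metric $\rho$ using Lemma \ref{P:3.5}. Concretely, for each $i = 1,2,3$ consider the real-valued processes $Y^{k,i}_t := f_i(\wt{X}^k_t)$. The first step is to show that each family $\{Y^{k,i}, \IP^{\overline{m}_k}\}_{k\ge 1}$ is C-tight in $\mathbf{D}([0,T], \IR)$. To do this I would use the standard criterion for C-tightness of real semimartingales (Aldous's criterion, or the Joffe--M\'etivier / Ethier--Kurtz characterization): it suffices to check (a) tightness of the one-dimensional marginals (immediate, since $E_0$ is bounded, so $f_i$ is bounded, and $\overline{m}_k$ is a finite measure with $\overline{m}_k \Rightarrow m|_{E_0}$ by Proposition \ref{P:2.1}), and (b) a uniform modulus-of-continuity-type estimate controlling jump sizes and oscillations. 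For the jump sizes, Lemma \ref{P:3.4} gives $|f_i(\wt{X}^k_{t}) - f_i(\wt{X}^k_{t-})| \le 9\cdot 2^{-k} \to 0$ uniformly, which rules out jumps in the limit and is exactly what C-tightness (as opposed to mere tightness) requires; one must separately note that jumps involving $a^*_k$ still produce small increments of $f_i$ because a neighbor of $a^*_k$ in $D^k_\eps$ lies within $O(2^{-k})$ of $\partial B_\eps$, hence $f_i$ of it is $O(2^{-k})$.

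The heart of step (b) is to control the increments $\IE^{\overline{m}_k}\big[(Y^{k,i}_{t+h} - Y^{k,i}_t)^2\big]$ uniformly in $k$ and to identify the Dynkin-formula decomposition $Y^{k,i}_t = Y^{k,i}_0 + M^{k,i}_t + A^{k,i}_t$, where $M^{k,i}$ is a martingale with predictable quadratic variation governed by the "carr\'e du champ" of $f_i$ under $\wt\EE^k$, and $A^{k,i}$ is a bounded-variation part coming from $\wt\EE^k(f_i, \cdot)$ tested against indicators. Since $\wt{X}^k$ has generator $\lambda_k \sum_z \wt j_k(x,z)(g(z)-g(x))$ with $\lambda_k = 2^{2k}$, the relevant quantities are $\lambda_k \sum_{z \leftrightarrow x} \wt j_k(x,z)(f_i(z)-f_i(x))^2$ (for $\langle M^{k,i}\rangle$) and $\lambda_k \sum_{z \leftrightarrow x} \wt j_k(x,z)(f_i(z)-f_i(x))$ (for $A^{k,i}$). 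On $D^k_\eps$ away from $a^*_k$ these are exactly the discretizations that make $(f_1,f_2,f_3)\circ \wt{X}^k$ look like a reflected Brownian motion run at the correct speed on the relevant piece of $\IR^2$ or $\IR_+$ — this is the computation underlying the form \eqref{reflected-form} and the weak-convergence statement of Theorem \ref{main-result}, and it mirrors the argument of \cite{BC}. The key point is that all these sums are bounded uniformly in $k$ (by boundedness of $E_0$ and Lemma \ref{P:3.4}), so that $\IE^{\overline{m}_k}[\langle M^{k,i}\rangle_{t+h} - \langle M^{k,i}\rangle_t] \le C h$ and the bounded-variation part has uniformly controlled total variation; applying this with stopping times yields Aldous's condition. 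Summing over $i=1,2,3$ gives C-tightness of the $\IR^3$-valued process $\big(f_1(\wt{X}^k), f_2(\wt{X}^k), f_3(\wt{X}^k)\big)$.

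Finally I would transfer back to the geodesic metric. C-tightness in $\mathbf{D}([0,T], \IR^3)$ means (via the standard Aldous/Prohorov reformulation) that for every $\eta > 0$, $\lim_{\delta \to 0}\limsup_k \IP^{\overline{m}_k}\big(\sup_{|s-t|\le\delta}|(f_j(\wt X^k_s) - f_j(\wt X^k_t))_{j}| > \eta\big) = 0$, together with tightness of marginals and vanishing maximal jump. By inequality \eqref{e:5.4} of Lemma \ref{P:3.5}, $\rho(x,y) \wedge \rho(x,y)^2 \le C_2 |(f_1(x)-f_1(y), \dots)|$, so a small Euclidean increment of the projections forces a small $\rho$-increment of $\wt{X}^k$ itself (for the oscillation estimate one chooses $\eta$ small enough that $\rho \wedge \rho^2 = \rho^2$ is the binding branch on the relevant scale, giving $\rho(x,y) \le \sqrt{C_2}\,|(f_1(x)-f_1(y),\dots)|^{1/2}$); likewise \eqref{e:5.3} controls how far $\wt{X}^k$ can be from $a^*$ in $\rho$, and the maximal $\rho$-jump is again $O(2^{-k})$ by the same geometry as above (a $\rho$-jump either equals a Euclidean edge length $2^{-k}$ or passes through $a^*$, in which case both endpoints are $O(2^{-k})$-close to $a^*$ in $\rho$). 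Combined with the fact that $\overline{E}_0$ is compact in $(\overline E_0, \rho)$, the $\rho$-tightness criterion (e.g. \cite[Thm.~VI.3.26 / VI.3.21]{...} — in the paper's setup one would cite whichever Jacod--Shiryaev or Billingsley statement Section \ref{S:2} or \cite{BC} sets up) is verified, and the vanishing of jumps upgrades tightness to C-tightness in $\mathbf{D}([0,T],\overline E_0,\rho)$.

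I expect the main obstacle to be step (b): obtaining the uniform-in-$k$ bound on the drift part $A^{k,i}$ near $a^*_k$ and near $\partial E_0$. Near $\partial E_0$ the "reflection" means the drift of $f_i(\wt{X}^k)$ does not vanish but must be shown to be a bounded-variation term with uniformly bounded total variation (a discrete local time on the boundary), for which one uses that $\overline{v}_k(x) < v_k(x)$ only on an $O(2^k)$-cardinality boundary layer each of whose vertices contributes $O(2^{-k})$. Near $a^*_k$ the subtlety is that $f_i$ is identically $0$ on $\IR_+$ and on $a^*_k$ itself, and the $2^{k+1}$ weighting of the $\IR_+$-direction in $\wt j_k(a^*_k,\cdot)$ must be shown not to blow up the second-moment increment — here one uses that the time spent at $a^*_k$ is asymptotically negligible (the sojourn time at $a^*$ is zero for BMVD, consistent with $m_k(a^*_k)\to 0$ from Proposition \ref{P:2.1}), so the contribution of excursions through $a^*_k$ to the oscillation of $f_i(\wt X^k)$ is $o(1)$.
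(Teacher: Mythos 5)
Your overall architecture matches the paper's: project $\wt{X}^k$ to $\IR^3$ via $(f_1,f_2,f_3)$, prove tightness there, pull the modulus of continuity back through Lemma \ref{P:3.5}, and upgrade to C-tightness because the maximal jump is $O(2^{-k})$. The divergence — and the gap — is in how you handle the non-martingale part of $f_i(\wt{X}^k)$. You propose a Dynkin decomposition $Y^{k,i}=Y^{k,i}_0+M^{k,i}+A^{k,i}$ and a direct bound on the drift $A^{k,i}_t=\int_0^t\wt{\mathcal{L}}_kf_i(\wt{X}^k_s)\,ds$. But $\wt{\mathcal{L}}_kf_i$ is \emph{not} uniformly bounded: at a boundary vertex (where $\bar v_k(x)<v_k(x)$) the first-order terms in the Taylor expansion fail to cancel, so $\wt{\mathcal{L}}_kf_i(x)=2^{2k}\cdot O(2^{-k})=O(2^k)$, and the same order appears at $a^*_k$ and its neighbors. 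Your heuristic — an $O(2^k)$-cardinality boundary layer each of whose vertices carries mass $O(2^{-2k})$ — only yields $\IE^{\overline{m}_k}[\mathrm{Var}_{[0,T]}(A^{k,i})]=O(1)$, and a uniform bound on expected total variation does not give C-tightness (a bounded-variation process can concentrate an $O(1)$ increase in an arbitrarily short random interval). Aldous's criterion requires $\IE[|A^{k,i}_{\tau+h}-A^{k,i}_\tau|]\to 0$ uniformly over \emph{stopping times} $\tau$, at which the law of $\wt{X}^k_\tau$ is no longer $\overline{m}_k$ and may be concentrated on the boundary layer; to salvage this you would need a uniform-in-$k$ discrete local-time estimate (occupation of an $O(2^{-k})$-layer in time $h$ is $O(2^{-k}\sqrt h)$, also near the anomalous vertex $a^*_k$), which is substantial extra work that you flag as "the main obstacle" but do not carry out.

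The paper avoids this entirely with a Lyons--Zheng (forward--backward martingale) decomposition: since $\wt{X}^k$ is conservative and $\overline{m}_k$-symmetric, $\IP^{\overline{m}_k}$ on $[0,T+1)$ is invariant under the time reversal $r_{T+1}$, and one writes $f_i(\wt{X}^k_t)-f_i(\wt{X}^k_0)=\tfrac12 M^{k,f_i}_t-\tfrac12\bigl(M^{k,f_i}_{(T+1)-}-M^{k,f_i}_{(T+1-t)-}\bigr)\circ r_{T+1}$ as in \eqref{eq3.4}, so that only martingale terms need to be controlled; their tightness follows from the clean bound $\langle M^{k,f_i}\rangle_t-\langle M^{k,f_i}\rangle_s\le 81(t-s)$ supplied by Lemma \ref{P:3.4} and the L\'evy system, with no drift estimate at all. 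This is also why the lemma is stated for the stationary initial distribution $\overline{m}_k$ rather than for arbitrary starting points. So: the reduction to $\IR^3$ and the pull-back are right, but the core tightness step as you propose it does not close; either supply the local-time estimates near $\partial E_0^k$ and $a^*_k$, or replace the drift analysis by the time-reversal identity.
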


\begin{proof}
For every  continuous $f: \overline{E}_0\rightarrow \IR$ such that $f|_{\IR_+\cap \overline{E}_0}\in C^1(\IR_+\cap \overline{E}_0)$ and $f|_{D_\eps\cap \overline{E}_0}\in C^1(D_\eps\cap \overline{E}_0)$,
 we denote by $M^{k, f}$ the martingale additive functional of finite energy in the Fukushima decomposition of $f(\wt{X}^k_t)-f(\wt{X}^k_0)$ (see, i.e., \cite[Theorem 4.2.6]{CF}).  From Theorem \ref{DF-pure-jump}, one can see that a L\'{e}vy system $(\wt{N}_k, \wt{H}_k)$ of  $\wt{X}^k$ can be chosen as 
 \begin{equation*}
 \wt{N}_k(x, dy)=\wt{J}_k(x, dy),\quad \wt{H}_{k, t}=\lambda_k t=2^{2k}t.
 \end{equation*}
In the following we apply \cite[(4.3.9)]{CF} to express the quadratic variation of the martingale part appearing in the Fukushima decomposition   of $f(\wt{X}^k_t)-f(\wt{X}^k_0)$  in terms of a L\'{e}vy system. Indeed, by Proposition \ref{jump-distribution}, we have 
\begin{align*}
\langle M^{k, f}\rangle_t &=\int_0^t  \int_E  \left(f(\wt{X}_s^k)-f(y)\right)^2 \wt{N}_k(\wt{X}_s^k, dy)2^{2k}ds
\\
&=\int_0^t \sum_{y\leftrightarrow \wt{X}_s^k  \text{ in }G^k_0 }\left(f(\wt{X}_s^k)-f(y)\right)^2 \wt{j}_k(\wt{X}_s^k, y)2^{2k}ds.
\end{align*}
Recall that $f_i$, $i=1,2,3$ defined in \eqref{def-f1}-\eqref{def-f3} are continuous on $\overline{E}_0$ and of $C^1$-class on $E_0\backslash \{a^*\}$. Below we first claim that the laws of $\{\sum_{i=1}^3 \langle M^{k, f_i}\rangle_t, k\ge 1\}$ are C-tight.
Lemma \ref{P:3.4}  implies that for $i=1,2,3$, 
\begin{equation}\label{e:2.24}
|f_i(x)-f_i(y)|\le 9\cdot 2^{-k}, \quad \text{for }x,y\in D^k_\eps, \, x\leftrightarrow y \text{ in }G^k_0.
\end{equation}
Thus for all $k\ge 1$, $i=1,2,3$, $t>s$,
\begin{eqnarray*}
\langle M^{k, f_i}\rangle_t -\langle M^{k, f_i}\rangle_s  &=& \int_s ^t \sum_{y\leftrightarrow \wt{X}^k_u \text{ in }G^k_0  } \left( f_i(\wt{X}_u^k)-f_i(y)\right)^2 \wt{j}_k(\wt{X}_u^k, y) 2^{2k} du
\\
&
\stackrel{\eqref{e:2.24}}{\le} & \int_s^t 9^2\cdot  2^{-2k}\cdot 2^{2k} \sum_{y\leftrightarrow \wt{X}^k_u \text{ in }G^k_0  }\wt{j}_k(\wt{X}_u^k, y) \;du =81\cdot (t-s).
\end{eqnarray*}
Hence by \cite[Proposition VI.3.26]{JS}, the laws of the sequence $
\{\sum_{i=1}^3 \langle M^{k, f_i}\rangle_t, k\ge 1\}$ are C-tight in $\mathbf{D}([0, T], \IR, |\cdot|)$. Then by \cite[Theorem VI.4.13]{JS},  the laws of  the $\IR^3$-valued processes $\{(M^{k, f_1}, M^{k, f_2}, M^{k, f_3}), k\ge 1\})$ are  tight in $\mathbf{D}([0, T], \IR^3, |\cdot|)$.  

For every $k\ge 1$, without loss of generality we may assume that $\Omega$ is the canonical space $\mathbf{D}([0, \infty), \overline{E}_0,\rho)$, and $\wt{X}^k_t$ is the coordinate map  on $\Omega$. 
For any time  $t>0$, let $r_t$ be the time-reversal operator from time $t$ for $\wt{X}^k$:
\begin{equation}\label{def-time-reversal}
r_t(\omega)(s)=\left\{
    \begin{aligned}
         &\omega((t-s)^-),  &\text{if } 0\le s\le t;\\
        &\omega (0), &\text{if } s\ge t.
    \end{aligned}
\right.
\end{equation}
It is obvious that $\wt{X}^k$ is stochastically continuous, therefore $\wt{X}^k$ is continuous at  every time $t>$, $\IP^{\overline{m}_k}$-a.s. Also, for any $T>1$, since each $\wt{X}^k$ has an infinite lifetime, i.e., $\zeta= \infty$ a.s., $\IP^{\overline{m}_k}$  restricted on the time interval $[0, T)$   is invariant under $r_T$. By Fukushima decomposition,   for $i=1,2,3$, it holds that
\begin{eqnarray}
f_i(\wt{X}^k_t)-f_i(\wt{X}^k_0) &=& M_t^{k, f_i}+N^{k, f_i}_t \nonumber
\\
&=& \frac{1}{2}M_t^{k, f_i}+\frac{1}{2}M_t^{k, f_i} +N^{k, f_i}_t \nonumber
\\
&=& \frac{1}{2}M_t^{k, f_i}+\left(\frac{1}{2}M_t^{k, f_i} +N^{k, f_i}_t\right)\circ r_{T+1}\circ r_{T+1} \nonumber
\\
&=& \frac{1}{2}M_t^{k, f_i} -\frac{1}{2}\left(M_{(T+1)-}^{k, f_i} - M_{(T+1-t)-}^{k, f_i}\right)\circ r_{T+1}, \,t\in [0, T), \label{eq3.4}
\end{eqnarray}
where the last equality above is due to \cite[p.\;955, lines 12-14]{CFKZ}. Next we prove that for every $T>0$ and every $i=1,2,3$,  the right hand side of \eqref{eq3.4} is tight in the space $\mathbf{D}([0, T+1), \IR)$. Towards this,  for notational convenience, for every pair of  $\theta, \delta>0$ we define the following sets in $\Omega$:
\begin{equation*}
A^i_{1, \theta, \delta}:=\left\{\omega\in \Omega:\;  \inf_{\substack{0=t_0<t_1<\cdots <t_r=T+1,\\ \inf_{1\le i<r}(t_i-t_{i-1})\ge \theta}} \max_{i\le r} \sup_{s, t\in [t_{i-1}, t_i)} \left| f_i(\wt{X}^k_t(\omega))- f_i (\wt{X}^k_s(\omega)) \right|\ge \delta \right\},
\end{equation*}
\begin{equation*}
A^i_{2, \theta, \delta}:=\left\{\omega\in \Omega:\;  \inf_{\substack{0=t_0<t_1<\cdots <t_r=T+1,\\ \inf_{1\le i<r}(t_i-t_{i-1})\ge \theta}} \max_{i\le r} \sup_{s, t\in [t_{i-1}, t_i)} \left| \frac{1}{2}M_t^{k, f_i}(\omega) - \frac{1}{2}M^{k, f_i}_s(\omega) \right|\ge \delta \right\},
\end{equation*}
and 
\begin{equation*}
A^i_{3, \theta,\delta}:=\left\{\omega\in \Omega:\;  \inf_{\substack{0=t_0<t_1<\cdots <t_r=T+1,\\ \inf_{1\le i<r}(t_i-t_{i-1})\ge \theta}} \max_{i\le r} \sup_{s, t\in [t_{i-1}, t_i)} \left| \mathring{M}_t^{k, f_i}(\omega) - \mathring{M}^{k, f_i}_s(\omega) \right|\ge \delta \right\},
\end{equation*}
where $\mathring{M}_t^{k, f_i}(\omega):=\frac{1}{2}\left(M_{(T+1)-}^{k, f_i} - M_{(T+1-t)-}^{k, f_i}\right)\circ r_{T+1}(\omega)$. It then follows that 
\begin{equation*}
A^i_{1, \theta, \delta}\subset  \left( A^i_{2, \theta, \frac{\delta}{2}} \cup  A^i_{3, \theta, \frac{\delta}{2}}\right),\,\text{and } \, \IP^{\overline{m}_k}\left( A^i_{2, \theta, \frac{\delta}{2}} \right)=\IP^{\overline{m}_k}\left( A^i_{3, \theta, \frac{\delta}{2}} \right),
\end{equation*} 
where the equality above results from the definition of $r_{T+1}$, as well as the fact that $\IP^{\overline{m}_k}$  restricted on the time interval $[0, T+1)$   is invariant under $r_{T+1}$. This further implies that 
\begin{equation}\label{L2.6-compute-1}
\IP^{\overline{m}_k}\left( A^i_{1, \theta, \delta} \right)\le 2\IP^{\overline{m}_k}\left( A^i_{2, \theta, \frac{\delta}{2}} \right), \quad \text{for all }\theta, \delta>0.
\end{equation}
Using   \eqref{L2.6-compute-1}, by  verifying the two conditions in  \cite[Theorem VI 3.21]{JS},  one can easily conclude  from the tightness of  $M^{k, f_i}$ that  each $\{f_i(\wt{X}^k),k\ge 1\}$, $i=1,2,3$,  is tight  in $\mathbf{D}([0, T+1), \IR, |\cdot|)$.  Hence the laws of
\begin{equation*}
\left\{\left(f_1(\wt{X}^k_t), f_2(\wt{X}^k_t),f_3(\wt{X}^k_t)\right),\,\IP^{\overline{m}_k}\right\}_{k\ge 1}
\end{equation*}
are tight in  $\mathbf{D}([0, T], \IR^3, |\cdot|)$, for any $T>0$.   Next we use Lemma \ref{P:3.5} to show the tightness of $(\wt{X}^k)_{k\ge 1}$. As a standard notation, for $x\in \mathbf{D}([0, +\infty), \IR^3, d)$  where $d(\cdot,\, \cdot)$ is a metric on $\IR^3$, we denote by
\begin{equation}\label{def-dubiu}
w_d(x,\, \theta,\, T):=\inf_{\{t_i\}} \max_{i} \sup_{s, t\in [t_i, t_{i-1}]} d(x(s), x(t)),
\end{equation}
where $\{t_i\}$ ranges over all possible partitions of the form $0=t_0<t_1<\cdots <t_{n-1}<T\le t_n$ with $\min_{1\le i\le n} (t_i-t_{i-1})\ge \theta$ and $n\ge 1$. Again in view of \cite[Theorem VI 3.21]{JS}, the tightness of $\{(f_1(X^k_t), f_2(X^k_t),f_3(X^k_t)), \, k\ge 1\}$ is equivalent to the conjunction of:
\begin{description}
\item{{\rm (i)}} For any $T>0$, $\delta>0$, there exist $k_0\in \mathbb{N}$ and $K>0$ such that for all $k\ge k_0$,
\begin{equation}\label{e:3.12}
\IP^{\overline{m}_k}\left[\sup_{t\in [0, T]}\left|(f_1\wt{X}^k_t), f_2(\wt{X}^k_t),f_3(\wt{X}^k_t)\right|>K\right]<\delta. 
\end{equation} 
\item{{\rm (ii)}} For any $T>0$, $\delta_1, \delta_2>0$, there exist $\theta>0$ and $k_0>0$ such that for all $k\ge k_0$,
\begin{equation}\label{e:3.13}
\IP^{\overline{m}_k}\left[\sup_{t\in [0, T]}w_{|\cdot|}\left( (f_1(\wt{X}^k), f_2(\wt{X}^k), f_3(\wt{X}^k))  ,\, \theta,\, T\right)>\delta_1\right]<\delta_2. 
\end{equation}
\end{description}
Finally, on account of Lemma \ref{P:3.5}, \eqref{e:3.12} and \eqref{e:3.13} together  imply:
\begin{description}
\item{{\rm (i)}} For any $T>0$, $\delta>0$, there exist $k_0\in \mathbb{N}$ and $K>0$ such that for all $k\ge k_0$,
\begin{equation}\label{e:3.12-1}
\IP^{\overline{m}_k}\left[ \sup_{t\in [0, T]} \big|\wt{X}^k_t\big|_\rho>N\right]<\delta. 
\end{equation} 
\item{{\rm (ii)}} For any $T>0$, $\delta_1, \delta_2>0$, there exist $\theta>0$ and $k_0>0$ such that for all $k\ge k_0$,
\begin{equation}\label{e:3.13-1}
\IP^{\overline{m}_k}\left[w_\rho\left( \wt{X}^k  ,\, \theta,\, T\right)>\delta_1\right]<\delta_2. 
\end{equation}
\end{description}
This proves the tightness of $\{\wt{X}^k,\IP^{\overline{m}_k}; k\ge 1\}$  in the space $\mathbf{D}([0, T],\overline{E}_0, \rho)$. Finally, to conclude the C-tightness of $\{\wt{X}^k,\IP^{\overline{m}_k}; k\ge 1\}$ (i.e., tight with all subsequential limits supported on the set of continuous paths), it suffices to note  \cite[Chapter 3, Theorem 10.2]{EK}.
\end{proof}

Before  proving the next lemma to establish the uniform convergence of the generators of $\wt{X}^k$, for notational  convenience, we define the following class of functions:
\begin{align}
\mathcal{G}:&=\{f:\IR^2\cup \IR_+\rightarrow \IR,\,f|_{B_\eps}=\text{const}=f|_{\IR_+}(0),\, f|_{\IR^2}\in C^3(\IR^2), f|_{\IR_+} \in C^3(\IR_+), \nonumber
\\
&f \text{ is supported on a compact subset of }(E_0\backslash \{a^*\})\cup B_\eps \}.\label{def-class-G}
\end{align}
Every $f\in \mathcal{G}$ can be uniquely  identified as a function mapping $E$ to $\IR$. Thus for $f\in \mathcal{G}$,  we define
\begin{equation}\label{def-wt-Lk}
\mathcal{\wt{L}}_k f(x):=2^{2k}\sum_{ \substack{y\in E^k_0, \\y\leftrightarrow x \text{    in }G^k_0}} \left(f(y)-f(x)\right)\wt{J}_k (x, dy), \quad \text{ for }x\in E^k_0.
\end{equation}
For the rest of this paper we set 
\begin{equation}\label{def-S0k}
S^{k}_0:=\{x\in D_\eps^k\cap E_0^k:\; \bar{v}_{k}(x)=4)\}\cup \{x\in 2^{-k}\IZ_+\cap E^k_0:\; \bar{v}_{k}(x)=2\}.
\end{equation}
 It is easy to see that $\{S^k_0\}_{k\ge 1}$ is an increasing sequence of sets. Also, from \eqref {boundary-lattice-domain}, one can see that if a vertex $x\in( \IR_+\cup D^k_\eps)\backslash S^k_0$,  then either $x\leftrightarrow a^*_k$ in $G^k_0$, or $x\in \partial E_0^k$. 
\begin{lem}\label{Lemma2.6}
 For every $\delta>0$ and every $f\in \mathcal{G}$, there exists some $k_{\delta, f}\in \mathbb{N}$, such that for all $k\ge k_{\delta, f}$:
 \begin{description}
\item{(i)} \, $\overline{m}_k(E_0^k\backslash S_0^k)<\delta$;
 \item{(ii)}  As $k\rightarrow \infty$,  $\mathcal{\wt{L}}_kf$ converges uniformly to 
\begin{equation}\label{def-L}
\mathcal{L}f:=\frac{1}{2}\Delta f|_{\IR_+}+\frac{1}{4}\Delta f|_{D_\eps} \quad \text{on } S_0^{k_{\delta, f}}.
\end{equation}
 \end{description}
Also, there exists some constant $C_1>0$ independent of $k$ such  that for all $k\ge 1$ and all $x\in E^k_0$, 
\begin{equation*}
\wt{\mathcal{L}}_k f(x)\le C_1. 
\end{equation*}
\end{lem}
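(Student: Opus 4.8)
The plan is to prove the three assertions of Lemma~\ref{Lemma2.6} in turn, starting from the explicit road map $\wt{J}_k$ of Proposition~\ref{jump-distribution} and the Taylor expansion of $f \in \mathcal{G}$.

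\textbf{Part (i).} First I would observe that $E_0^k \backslash S_0^k$ consists of three kinds of vertices: those in $\partial E_0^k$, those adjacent to $a^*_k$ in $G^k_0$, and $a^*_k$ itself (by the remark following \eqref{def-S0k}). Since $E_0$ is a bounded Lipschitz domain, the number of boundary vertices in $D^k_\eps \cap \partial E_0^k$ is $O(2^k)$ and each carries $\overline{m}_k$-mass $O(2^{-2k})$, so their total mass is $O(2^{-k})$; similarly the one-dimensional boundary vertices contribute $O(2^{-k})$. By the estimate \eqref{P2.1-1} established in the proof of Proposition~\ref{P:2.1}, the number of $D^k_\eps$-vertices adjacent to $a^*_k$ is $O(2^k)$, so their mass is again $O(2^{-k})$, and $\overline{m}_k(a^*_k) \to 0$ by the computation at the end of that proof. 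Summing, $\overline{m}_k(E_0^k \backslash S_0^k) = O(2^{-k}) \to 0$, which gives a $k_{\delta,f}$ (in fact independent of $f$) for (i).

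\textbf{Part (ii).} On $S_0^k$ every vertex has all its lattice neighbors present, and is bounded away from $a^*_k$ once $k$ is large (because $f$ is supported off $a^*$ on the relevant component, or else constant near $B_\eps$). For $x \in D^k_\eps \cap S_0^k$ with neighbors $x \pm 2^{-k}e_1$, $x \pm 2^{-k}e_2$ and $\wt{j}_k(x,\cdot) = 1/4$, the quantity $\wt{\mathcal{L}}_k f(x) = 2^{2k} \cdot \frac14 \sum_{\pm, i}(f(x \pm 2^{-k}e_i) - f(x))$ is the standard centered second-difference quotient; Taylor expanding $f|_{\IR^2} \in C^3$ to third order, the first-order terms cancel, the second-order terms give $\frac14(\partial_{11}f + \partial_{22}f)(x) = \frac14\Delta f(x)$, and the remainder is bounded by $C\|f\|_{C^3} 2^{-k}$ uniformly in $x$. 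The same argument on $2^{-k}\IZ_+ \cap S_0^k$, where the speed-adjusted road map gives weight $1/2$ to each of the two neighbors but the sum in \eqref{def-wt-Lk} carries an extra factor making the effective coefficient match \eqref{reflected-form}, yields $\frac12 f''(x) + O(2^{-k})$; here I would be careful to track the factor $2^{k+1}$ appearing in $\wt{j}_k$ on the one-dimensional part and confirm it produces exactly $\frac12\Delta f$. Hence $\wt{\mathcal{L}}_k f \to \mathcal{L}f$ uniformly on $S_0^{k_{\delta,f}}$ (using that $\{S_0^k\}$ is increasing so the limit is taken on a fixed set).

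\textbf{The uniform bound.} For the final claim I would split $E_0^k$ into $S_0^k$, $\partial E_0^k$, and the vertices adjacent to $a^*_k$ together with $a^*_k$. On $S_0^k$ the bound $|\wt{\mathcal{L}}_k f(x)| \le \frac14|\Delta f(x)| + C\|f\|_{C^3}2^{-k} \le C_1$ follows from Part (ii). On a boundary vertex or a vertex adjacent to $a^*_k$ the difference $|f(y) - f(x)|$ over an edge is still $\le \|\nabla f\|_\infty \cdot 2^{-k}$ on the two-dimensional side, or — crucially — if the edge is $e_{xa^*_k}$ then $f(a^*_k) = f(x)$-type cancellation is \emph{not} available, so instead I use that $f$ is constant on a neighborhood of $B_\eps$ by definition of $\mathcal{G}$, which forces $f$ to be constant on all vertices within $O(2^{-k})$ of $\partial B_\eps$ once $k$ is large, hence $f(y) - f(x) = 0$ across such edges and $\wt{\mathcal{L}}_k f$ vanishes near $a^*_k$; for moderate $k$ one gets a crude bound from $2^{2k} \cdot (\text{number of neighbors})^{-1} \cdot (\text{number of neighbors}) \cdot \mathrm{osc}(f)$, which is finite for each fixed $k$, and taking the max over the finitely many small $k$ and the uniform bound for large $k$ gives $C_1$.

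\textbf{Main obstacle.} The delicate point is the behavior near $a^*_k$: a naive Taylor estimate fails there because $f$ need not be $C^1$ across the darning point and the geometry is singular. The resolution is to exploit the structural hypothesis in \eqref{def-class-G} that every $f \in \mathcal{G}$ is constant on $B_\eps$ and equals $f|_{\IR_+}(0)$ there, which — combined with the standing assumption $2^{-k} < \eps/4$ and $\rho(a^*,\partial E_0) > 16\eps$ — means all edges incident to $a^*_k$ in $G^k_0$ either lie in the region where $f$ is constant (two-dimensional side near $B_\eps$) or run into $2^{-k}\IZ_+$ near $0$ where again $f$ is locally controlled; so $\wt{\mathcal{L}}_k f$ is in fact $0$ in a fixed neighborhood of $a^*_k$ for all large $k$, which simultaneously handles the convergence on $S_0^k$ (these vertices never feel $a^*_k$) and the global bound.
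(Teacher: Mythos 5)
Your parts (i) and (ii) follow essentially the paper's route: part (i) is in fact more detailed than the paper's one-line assertion, and the Taylor expansion in part (ii) is exactly the paper's computation. (One small point there: on $2^{-k}\IZ_+\cap S^k_0$ the road map is simply $\wt{j}_k(x,y)=1/2$ for each of the two neighbours; the factor $2^{k+1}$ you worry about occurs only in $\wt{j}_k(a^*_k,\cdot)$ and never enters on $S^k_0$, so there is no ``extra factor'' to track.) The genuine gap is in your treatment of the vertices at and adjacent to $a^*_k$ for the uniform bound. You assert that ``$f$ is constant on a neighborhood of $B_\eps$ by definition of $\mathcal{G}$'' and conclude that $\wt{\mathcal{L}}_kf$ vanishes identically near $a^*_k$. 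But \eqref{def-class-G} only requires $f|_{B_\eps}=\mathrm{const}$, i.e.\ constancy on the closed disc itself, not on any neighbourhood of it; a typical $f\in\mathcal{G}$ is non-constant at points of $D_\eps$ arbitrarily close to $\partial B_\eps$, so $f(y)-f(x)\neq 0$ across the edges $e_{xa^*_k}$ and $\wt{\mathcal{L}}_kf$ does not vanish there. What actually saves the bound --- and what the paper uses --- is that $f|_{\IR^2}\in C^3(\IR^2)$ together with constancy on the \emph{closed} disc forces $\nabla f=0$ on $\partial B_\eps$, whence $|f(y)-f(a^*)|\le c\,2^{-2k}$ for every lattice point $y$ within $O(2^{-k})$ of $B_\eps$ (and likewise $|f(y)-f(x)|\le c\,2^{-2k}$ for $x\leftrightarrow a^*_k$ and its neighbours $y$). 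This second-order smallness exactly offsets the speed $2^{2k}$ and yields an $O(1)$ bound, not $0$.

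Moreover, at $x=a^*_k$ itself the single $\IR_+$-neighbour $y=2^{-k}$ carries weight $\wt{j}_k(a^*_k,y)=2^{k+1}/\bigl(\bar{v}_k(a^*_k)+2^{k+1}-1\bigr)$, which is bounded away from $0$ since $\bar{v}_k(a^*_k)\asymp 2^k$ by \eqref{P2.1-1}; its contribution to $\wt{\mathcal{L}}_kf(a^*_k)$ is therefore of order $2^{2k}\,|f(2^{-k})-f(0)|\asymp 2^{k}\,|f'(0^+)|$. Saying that $f$ is ``locally controlled'' near $0$ does not make this term bounded: one needs $|f(2^{-k})-f(0)|=O(2^{-2k})$, i.e.\ the vanishing of the first derivative of $f|_{\IR_+}$ at the origin (this is how the paper's phrase ``the first order derivatives of $f$ vanish'' near $a^*$ must be read, and it is the flux condition one expects of test functions for the BMVD generator). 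As written, your argument for the global bound does not go through at $a^*_k$, and this vertex is precisely the delicate point of the lemma.
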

\begin{proof}
Given any $\delta>0$, we know there exists some $k_1\in \mathbb{N}$ such that for all $k\ge k_1$, $\overline{m}_k(E_0^k\backslash S_0^k)<\delta$. Also, 
for $f\in \mathcal{G}$, since $f$ is supported on  a compact subset  of $E_0$,  there must  exist some $k_2\in \mathbb{N}$ such that for all $k\ge k_2$,  $f$ is constant on $E^k_0\backslash  (S^{k}_0\cup \{x: x\leftrightarrow a_k^* \text{ or }x=a^*_k\})$. Below we claim that it suffices to take $k_{\delta ,f}:=\max\{k_1, k_2\}$.

To show the uniform convergence of  $\mathcal{\wt{L}}_kf (x)$ on $S^{k_{\delta, f}}_0$, we divide our discussion into two cases depending on the position of $x$.  Noticing that $f\in \mathcal{G}$ is defined on $\IR^2\cup \IR_+$, in the following we are allowed to use Taylor expansion in terms of Euclidean coordinates. 
\\
{\it Case 1. } $x\in 2^{-k}\IZ_+\cap S^k_0$.  By Taylor expansion, we have
\begin{align}\label{taylor-1}
\mathcal{\wt{L}}_{k} f(x)&=2^{2k}\sum_{y\leftrightarrow x \text{ in }G^k_0} \left[f'(x)\cdot (y-x)+\frac{1}{2}f''(x)\cdot (y-x)^2 +O(1)|y-x|^3 \right]\wt{j}_{k}(x, y).
\end{align} 
{\it Case 2. } $x\in D_\eps^k\cap S^k_0$. For this case we have
\begin{align}
\mathcal{\wt{L}}_{k} f(x)&= 2^{2k}\sum_{y\leftrightarrow x \text{ in }G^k_0 }\Bigg[\sum_{i=1}^2 \frac{\partial f(x)}{\partial x_i}\cdot (y_i-x_i)+\frac{1}{2}\sum_{i, j=1}^2\frac{\partial^2 f(x)}{\partial x_i\partial x_j}\cdot (y_i-x_i)(y_j-x_j) \nonumber
\\
&\qquad +O(1)|y-x|^3\Bigg]\wt{j}_{k}(x, y)\nonumber
\\
&=2^{2k}\sum_{y\leftrightarrow x \text{ in }G^k_0}\left[\sum_{i=1}^2 \frac{\partial f(x)}{\partial x_i}\cdot (y_i-x_i)+\frac{1}{2}\sum_{i=1}^2\frac{\partial^2 f(x)}{\partial x_i^2}\cdot (y_i-x_i)^2+O(1)|y-x|^3\right]\wt{j}_{k}(x, y),\label{taylor-2}
\end{align}
where the second  $``="$ is due to the fact when $i\neq j$, either $(y_j-x_j)$ or  $(y_i-x_i)$ is zero.  It   follows  from \eqref{p_k(x,y)-1} that
 \begin{equation}\label{L_kf-1}
\mathcal{\wt{L}}_{k} f(x)=\left\{
    \begin{aligned}
        &\frac{1}{2}\Delta f(x)+O(1)2^{-k},&x\in 2^{-k}\IZ_+\cap S^k_0\\
        &\frac{1}{4}\Delta f(x)+O(1)2^{-k},&x\in D_\eps^k\cap S^k_0.
    \end{aligned}
\right.
\end{equation}
On account of   \eqref{L_kf-1} and  the fact that $\{S_0^k\}_{k\ge 1}$ is a sequence of increasing sets in $k$,  one can see that for a fixed $k_{\delta,f}\in \mathbb{N}$, as $k\rightarrow \infty$, $\mathcal{\wt{L}}_kf$ converges uniformly to 
\begin{equation}\label{def-L}
\mathcal{L}f:=\frac{1}{2}\Delta f|_{\IR_+}+\frac{1}{4}\Delta f|_{D_\eps} \quad \text{on } S_0^{k_{\delta,f}}.
\end{equation}
To prove the last statement, we recall that $f\in \mathcal{G}$ has compact and is of $C^3$-class except at the origin.  Therefore in view of \eqref{L_kf-1}, there exists $c_1>0$ such that 
\begin{equation*}
\wt{\mathcal{L}}_k f(x)\le c_1, \quad \text{for all }k\ge 1,\, \text{ all }x\in S^k_0.
\end{equation*}
It remains to show that there exists some $c_2>0$ such that 
\begin{equation}\label{e:2.36}
\wt{\mathcal{L}}_k f(x)\le c_2, \quad \text{for all }k\ge 1,\, \text{ all }x\in E^k_0\backslash S^k_0.
\end{equation}
We note that for $x\in E^k_0\backslash S_0^k$, there are three cases:
\\
{\it Case 1. }  $x=a^*_k$;
\\
{\it Case 2. } $x\in \partial E_0^k$;
\\
{\it Case 3. }  $x\in D^k_\eps$,  $x\leftrightarrow a^*_k$ in $G^k_0$.
\\
 For $f\in \mathcal{G}$, $f$ is a constant outside of  a compact subset of $(\overline{E}_0\backslash \{a^*\})\cup B_\eps$, and $f$ is a constant on the closed disc $B_\eps$. This implies that the first order derivatives of $f$ vanish outside of a compact subset of $E_0\backslash\{a^*\}$. Since $f|_{\IR^2}\in C^3(\IR^2)$  and $f|_{\IR_+}\in C^3(\IR_+)$,  $f$ has bounded second and third order derivatives.  This  implies that there exists some $c_3>0$ independent of $k$ such that
 \begin{equation*}
  |f(x)|\le c_3 \cdot 2^{-2k}, \quad \text{ if } x\in \partial E_0^k.
 \end{equation*}
 and
 \begin{equation*}
 |f(x)-f(a^*)|\le c_3 \cdot 2^{-2k}, \quad \text{ if } x\leftrightarrow a^*_k \text{ in }G^k_0.
 \end{equation*}
In view of the definition of $\wt{\mathcal{L}}_k$ in  \eqref{def-wt-Lk}, we know  there exists some constant $c_4>0$ independent of $k$ such that
\begin{equation*}
\wt{\mathcal{L}}_kf(x)\le c_4\quad \text{ on }  E^k_0\backslash S^k_0.
\end{equation*}
Finally, in view of  \eqref{e:2.36}, the proof is complete.  
\end{proof}

We prepare the following lemma for the key ingredient of the main result of this paper, Proposition \ref{P:3.8}.

\begin{lem}\label{integral-skorohkod-continuous}
For any $f\in \mathcal{G}$, any $0<t_1<t_2<T$, the following map from $\mathbf{D}([0, T], E, \rho)$ to $\IR^1$ is continuous with respect to Skorokhod topology:
\begin{equation*}
\omega\mapsto \int_{t_1}^{t_2} \left(\mathcal{L}f\right)  (\omega(s))ds.
\end{equation*}
\end{lem}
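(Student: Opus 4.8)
The plan is to show that $\omega \mapsto \int_{t_1}^{t_2} (\mathcal{L}f)(\omega(s))\,ds$ is continuous on $\mathbf{D}([0,T],E,\rho)$ by combining (a) the continuity of $\mathcal{L}f$ as a function on $E$, (b) the fact that the set of discontinuity times of a Skorokhod-convergent sequence is controllable, and (c) dominated convergence. First I would record that for $f\in\mathcal{G}$, the function $\mathcal{L}f = \frac12\Delta f|_{\IR_+} + \frac14\Delta f|_{D_\eps}$ extends to a \emph{bounded continuous} function on all of $E$ (with the geodesic distance $\rho$): indeed $f|_{\IR^2}\in C^3(\IR^2)$ and $f|_{\IR_+}\in C^3(\IR_+)$, so $\Delta f$ is continuous on each piece and bounded because $f$ has compact support; near $a^*$ the function $f$ is constant on $B_\eps$ and equals the constant $f|_{\IR_+}(0)$, so all second derivatives vanish in a neighborhood of $a^*$ on both components and $\mathcal{L}f$ is continuous across the darning point (it is simply $0$ there and on a whole $\rho$-neighborhood). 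In particular $\|\mathcal{L}f\|_\infty =: M < \infty$ and $\mathcal{L}f \in C_b(E,\rho)$.

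Next, suppose $\omega_n \to \omega$ in the Skorokhod topology on $\mathbf{D}([0,T],E,\rho)$. By the standard characterization there exist increasing homeomorphisms $\lambda_n$ of $[0,T]$ with $\sup_t |\lambda_n(t)-t|\to 0$ and $\sup_t \rho(\omega_n(\lambda_n(t)), \omega(t)) \to 0$. Write
\[
\int_{t_1}^{t_2}(\mathcal{L}f)(\omega_n(s))\,ds - \int_{t_1}^{t_2}(\mathcal{L}f)(\omega(s))\,ds
\]
and change variables $s = \lambda_n(u)$ in the first integral, turning it into $\int_{\lambda_n^{-1}(t_1)}^{\lambda_n^{-1}(t_2)} (\mathcal{L}f)(\omega_n(\lambda_n(u)))\,\lambda_n'(u)\,du$ — or, to avoid differentiability issues with $\lambda_n$, simply split the difference as
\[
\int_{t_1}^{t_2}\!\Big[(\mathcal{L}f)(\omega_n(s)) - (\mathcal{L}f)(\omega(\lambda_n^{-1}(s)))\Big]ds \;+\; \int_{t_1}^{t_2}\!\Big[(\mathcal{L}f)(\omega(\lambda_n^{-1}(s))) - (\mathcal{L}f)(\omega(s))\Big]ds.
\]
For the first integral, the integrand is bounded by $2M$ and, for each fixed $s$, tends to $0$ by uniform convergence $\rho(\omega_n(\lambda_n(u)),\omega(u))\to 0$ together with continuity of $\mathcal{L}f$ (apply this at $u = \lambda_n^{-1}(s)$, noting $\lambda_n^{-1}(s)\to s$); dominated convergence handles it. For the second integral, the integrand is again bounded by $2M$ and tends to $0$ for every $s$ at which $\omega$ is continuous, since $\lambda_n^{-1}(s)\to s$ and $\mathcal{L}f\circ\omega$ is then continuous at $s$; as a càdlàg path $\omega$ has at most countably many discontinuities, so this holds for a.e.\ $s$, and dominated convergence again gives the limit $0$.

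The only mild subtlety — and the step I expect to require the most care — is handling the time-change $\lambda_n$ cleanly without invoking differentiability of $\lambda_n$: this is why I would use the reformulation above with $\lambda_n^{-1}$ and pointwise-a.e.\ convergence rather than a substitution in the integral. Everything else is routine: boundedness and continuity of $\mathcal{L}f$ come directly from $f\in\mathcal{G}$, and the rest is two applications of the bounded convergence theorem. This establishes the claimed Skorokhod-continuity of $\omega\mapsto\int_{t_1}^{t_2}(\mathcal{L}f)(\omega(s))\,ds$.
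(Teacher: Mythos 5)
Your proof follows essentially the same route as the paper's: both arguments rest on (i) $\mathcal{L}f$ being bounded and continuous, (ii) the time-change characterization of Skorokhod convergence, (iii) the fact that a c\`adl\`ag path has at most countably many discontinuity times, and (iv) dominated convergence. The paper packages these as a sandwich of $(\mathcal{L}f)\circ\omega_1$ between the running envelopes $\left(\mathcal{L}f\right)\omega_{2*}(t,\epsilon)-\epsilon$ and $\left(\mathcal{L}f\right)\omega_{2}^{*}(t,\epsilon)+\epsilon$ followed by $L^1[0,T]$-convergence of the envelopes, whereas you split the difference of the two integrals into two terms and apply bounded convergence to each; the mathematical content is the same.

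One sub-claim in your preamble is wrong as stated, and it concerns precisely the fact the paper asserts without proof. From $f\equiv\text{const}$ on $B_\eps$ and $f|_{\IR^2}\in C^3(\IR^2)$ you may conclude that all derivatives of $f|_{\IR^2}$ vanish on the closed disc $B_\eps$, hence that $\Delta f|_{D_\eps}(x)\to 0$ as $x\to a^*$ from the two-dimensional side; you may \emph{not} conclude that the second derivatives vanish on a whole $\rho$-neighborhood of $a^*$ inside $D_\eps$. More seriously, on the $\IR_+$ side nothing in the definition of $\mathcal{G}$ forces $f''(0^+)=0$, so the limit of $\mathcal{L}f$ at $a^*$ along $\IR_+$ is $\tfrac12 f''(0^+)$ while along $D_\eps$ it is $0$: for a general $f\in\mathcal{G}$ the function $\mathcal{L}f$ need not be continuous at the darning point, and both of your dominated-convergence steps (as well as the paper's envelope step) use continuity of $\mathcal{L}f$ at every point a path may visit --- consider $\omega_n\equiv x_n$ with $x_n\downarrow 0$ in $\IR_+$ versus $\omega\equiv a^*$. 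This is a gap you share with the printed proof rather than one you introduce, but your stated justification for it does not hold; it would be repaired by adding $f''(0^+)=0$ to the definition of $\mathcal{G}$ (harmless where the lemma is applied) or by restricting to paths that spend zero Lebesgue time at $a^*$.
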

\begin{proof}
We denote by  $d$ the Skorokhod metric on $\mathbf{D}([0, T], E, \rho)$. For any $\epsilon>0$, any $\omega_1, \omega_2\in \mathbf{D}([0, T], E, \rho) $ such that $d(\omega_1, \omega_2)<\epsilon$, this means
\begin{equation*}
\inf_{\lambda \in\Lambda}\left\{\|\omega_1(t)-\omega_2(t)\|_\infty+\|\lambda(t)-t\|_\infty \right)\}<\epsilon,
\end{equation*}
where $\Lambda$ is the collection of strictly increasing continuous maps from $[0, T]$ to itself.  Thus by the fact that for any $f\in \mathcal{G}$, $\mathcal{L}f$ is bounded continuous, there exsits some $c_1>0$ such that 
\begin{equation*}
\inf_{\lambda \in\Lambda}\left\{\|\left(\mathcal{L}f\right)(\omega_1(t))- \left(\mathcal{L}f\right)(\omega_2(t))\|_\infty+\|\lambda(t)-t\|_\infty \right)\}<c_1\epsilon.
\end{equation*}
It follows that there exists a time change $\lambda_0\in \Lambda$ such that $\|\lambda_0(t)-t \|_\infty<\epsilon$, and 
\begin{equation*}
\left(\mathcal{L}f\right)\omega_2\circ \lambda_0-\epsilon \le \left(\mathcal{L}f\right)\omega_1 \le \left(\mathcal{L}f\right)\omega_2\circ \lambda_0+\epsilon\quad \text{on } [0, T]. 
\end{equation*}
Using notation
\begin{equation*}
\left(\mathcal{L}f\right)\omega^*(t, \epsilon):= \sup_{s: |s-t|<\epsilon} \left(\mathcal{L}f\right)(\omega(s)), \quad \left(\mathcal{L}f\right)\omega_*(t, \epsilon):= \inf_{s: |s-t|<\epsilon} \left(\mathcal{L}f\right)(\omega(s)), 
\end{equation*}
we get
\begin{equation*}
\left( \mathcal{L}f\right)\omega_{2*}(t, \epsilon)-\epsilon \le \mathcal{L}f\omega_1(t)\le \left( \mathcal{L}f\right)\omega_{2}^*(t, \epsilon)+\epsilon \quad \text{on }[0, T].
\end{equation*}
Both $\left( \mathcal{L}f\right)\omega_{2*}(t, \epsilon)-\epsilon$ and $\left( \mathcal{L}f\right)\omega_{2}^*(t, \epsilon)+\epsilon$ converge in $L^1[0, T]$ to $(\mathcal{L}f)\omega_1$, as $\epsilon\downarrow 0$, which implies that as $d(\omega_1, \omega_2)\downarrow 0$, $(\mathcal{L}f)\omega_1\overset{L^1[0, T]}{\longrightarrow} (\mathcal{L}f)\omega_2 $. The proof is complete.
\end{proof}

The next proposition we present is the key ingredient of the main theorem of this paper. We denote the part process of BMVD  with parameter $\eps$ characterized in Theorem \ref{BMVD-non-drift}  killed upon leaving $E_0$ by $\wh{X}$. Thus $\wh{X}$ can be characterized by the following Dirichlet form $(\wh{\EE}, \mathcal{D}(\wh{\EE}))$ on $L^2(E_0, m)$:
\begin{align}\label{killed-bmvd}
\left\{
\begin{aligned}
&{\mathcal{D}}(\wh{\EE})  = \{ f \in \mathcal{D}(\EE),\,\wt{f}=0\,\, \EE\emph{-}q.e.\text{ on } E\backslash E_0\;\},
\\
&\wh{\EE}(f,g) = \frac{1}{4} \int_{D_\eps}\nabla f(x) \cdot \nabla g(x) dx+\frac{1}{2}\int_{\IR_+}f'(x)g'(x)dx,
\end{aligned}
\right.
\end{align}
where $(\EE, \mathcal{D}(\EE))$ is given in Theorem \ref{BMVD-non-drift}, and  $\wt{f}$ is an arbitrary $\EE$-quasi-continuous $m$-version of $f$.

\begin{prop}\label{P:3.8}
For every $T>0$ and  every open bounded domain $E_0$ satisfying the assumption \ref{assumption},  let $\wt{X}$ be the limit of any   weakly convergent subsequence of $\{\wt{X}^k,\, k\ge 1\}$ with initial distribution $\overline{m}_k$. $\wt{X}$ is a continuous process on state space $\overline{E}_0$ solving the martingale problem $(\mathcal{L}, \mathcal{G})$ with initial distribution $m$. Furthermore,  the law of $\{\wt{X}_t, \,t<\tau_{E_0}\}$ coincides with $\wh{X}$ characterized by \eqref{killed-bmvd}.
\end{prop}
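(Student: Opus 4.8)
The plan is to establish Proposition~\ref{P:3.8} in three stages: first showing that any subsequential limit $\wt{X}$ solves the martingale problem $(\mathcal{L},\mathcal{G})$, then identifying the killed process via Dirichlet form theory, and finally addressing the technical points about convergence of initial distributions.

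\textbf{Stage 1: the martingale problem.} By Lemma~\ref{L:3.4} the laws of $\{\wt{X}^k,\IP^{\overline{m}_k}\}$ are C-tight, so along the given subsequence they converge weakly in $\mathbf{D}([0,T],\overline{E}_0,\rho)$ to a process $\wt{X}$ supported on continuous paths. Fix $f\in\mathcal{G}$. For each $k$, since $\wt{X}^k$ is the Hunt process associated to the regular Dirichlet form $(\wt{\EE}^k,\mathcal{D}(\wt{\EE}^k))$ with speed function $\lambda_k = 2^{2k}$ and road map $\wt{J}_k$, the process
\begin{equation*}
M^{k,f}_t := f(\wt{X}^k_t) - f(\wt{X}^k_0) - \int_0^t \wt{\mathcal{L}}_k f(\wt{X}^k_s)\,ds
\end{equation*}
is a martingale with respect to the natural filtration (this is the Dynkin-type identity for pure jump processes; $\wt{\mathcal{L}}_k f$ is bounded uniformly in $k$ by the last assertion of Lemma~\ref{Lemma2.6}). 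The goal is to pass to the limit and show that $f(\wt{X}_t)-f(\wt{X}_0)-\int_0^t \mathcal{L}f(\wt{X}_s)\,ds$ is a martingale for $\wt{X}$. The two ingredients are: (a) $f$ is continuous and bounded on $\overline{E}_0$, hence $\omega\mapsto f(\omega(t))$ is a.s.-continuous functional at fixed $t$ for the limit law (since the limit is concentrated on continuous paths), so $f(\wt{X}^k_t)\Rightarrow f(\wt{X}_t)$; and (b) by Lemma~\ref{integral-skorohkod-continuous}, $\omega\mapsto \int_{t_1}^{t_2}\mathcal{L}f(\omega(s))\,ds$ is Skorokhod-continuous, so $\int_0^t\wt{\mathcal{L}}_kf(\wt{X}^k_s)\,ds \Rightarrow \int_0^t\mathcal{L}f(\wt{X}_s)\,ds$ provided we also control the difference $\int_0^t(\wt{\mathcal{L}}_kf - \mathcal{L}f)(\wt{X}^k_s)\,ds$. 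Here is where Lemma~\ref{Lemma2.6} is used in full: on $S_0^{k_{\delta,f}}$ the convergence $\wt{\mathcal{L}}_kf\to\mathcal{L}f$ is uniform, while on $E_0^k\setminus S_0^k$ (which has $\overline{m}_k$-measure $<\delta$, and where $\wt{\mathcal{L}}_kf$ stays bounded) one uses the stationarity of $\wt{X}^k$ under $\overline{m}_k$ to bound $\IE^{\overline{m}_k}\big[\int_0^t \mathbf{1}_{E_0^k\setminus S_0^k}(\wt{X}^k_s)\,ds\big] = t\cdot\overline{m}_k(E_0^k\setminus S_0^k)$, which is $O(t\delta)$. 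Combining and sending $k\to\infty$ then $\delta\to 0$ gives that for $0\le t_1<t_2$ and any bounded continuous $\mathcal{F}_{t_1}$-measurable $H$ that is a cylinder functional, $\IE[(f(\wt{X}_{t_2})-f(\wt{X}_{t_1})-\int_{t_1}^{t_2}\mathcal{L}f(\wt{X}_s)\,ds)H]=0$; a standard class argument promotes this to the full martingale property. So $\wt{X}$ with initial law $m$ (see Stage 3) solves the martingale problem $(\mathcal{L},\mathcal{G})$.

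\textbf{Stage 2: identification with the killed BMVD.} Now I invoke uniqueness: the martingale problem for $(\mathcal{L},\mathcal{G})$ on $\overline{E}_0$ — with $\mathcal{G}$ rich enough to separate points and with the specified ``reflecting-at-$\partial E_0$, no killing at $a^*$'' structure built into which test functions are admitted (functions in $\mathcal{G}$ are supported away from $\partial E_0$, which forces reflecting boundary behavior, and are constant on $B_\eps$, which encodes the darning) — has a unique solution, namely the law of the reflected BMVD $\wt{X}$ on $\overline{E}_0$. One way to see uniqueness cleanly: the generator $\mathcal{L}$ with domain containing $\mathcal{G}$, together with the symmetry with respect to $m$ (which is the limit of $\overline{m}_k$ by Proposition~\ref{P:2.1}), determines the Dirichlet form $(\wt{\EE},\mathcal{D}(\wt{\EE}))$ on $L^2(\overline{E}_0,m)$ given by $\wt{\EE}(f,g)=\frac14\int_{D_\eps\cap E_0}\nabla f\cdot\nabla g + \frac12\int_{\IR_+\cap E_0}f'g'$ with Neumann condition on $\partial E_0$ and the darning identification at $a^*$ — this is exactly the reflected version of the form in Theorem~\ref{BMVD-non-drift} restricted to $E_0$, because $\mathcal{G}$ is a form core. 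Then the part process of $\wt{X}$ killed on exiting $E_0$, i.e. $\{\wt{X}_t, t<\tau_{E_0}\}$, corresponds by the standard theory of part Dirichlet forms (\cite{CF}) to the form $(\wh{\EE},\mathcal{D}(\wh{\EE}))$ in \eqref{killed-bmvd}, which is by definition the law of $\wh{X}$. Hence $\{\wt{X}_t, t<\tau_{E_0}\}\overset{d}{=}\wh{X}$.

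\textbf{Stage 3 and the main obstacle.} Two loose ends: the initial distribution, and the genuine hard point. For the initial distribution, $\wt{X}^k_0\sim\overline{m}_k$ and $\overline{m}_k\Rightarrow m|_{E_0}$ by Proposition~\ref{P:2.1}; since $\overline{m}_k$ is a finite measure (bounded domain) and weak convergence of initial laws passes to the limit process under the already-established weak convergence of path laws, $\wt{X}_0\sim m|_{E_0}$ — this requires the remark in the excerpt that one works with possibly-unnormalized finite initial measures, and a small argument that evaluation at $t=0$ is continuous on the Skorokhod space at points of continuity, which holds here. \textbf{The main obstacle} I expect is Stage~2, specifically proving \emph{uniqueness} of the solution to the martingale problem $(\mathcal{L},\mathcal{G})$ on the non-smooth space $\overline{E}_0$ near the darning point $a^*$: the class $\mathcal{G}$ must be shown to be large enough that a solution's one-dimensional distributions, and then finite-dimensional distributions, are pinned down — in particular that the martingale problem ``sees'' the correct gluing condition at $a^*$ (no killing, the flux-balance between the $2$- and $3$-dimensional parts encoded by the measure $m$ and the speeds $\frac14,\frac12$). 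The cleanest route is probably to show directly that any solution is $m$-symmetric with Dirichlet form dominated by / equal to $\wt{\EE}$ using the test functions in $\mathcal{G}$ as a core, thereby reducing uniqueness to the uniqueness already established for BMVD in Theorem~\ref{BMVD-non-drift} (via \cite{CL}); care is needed because $\mathcal{G}$ excludes functions non-constant on $B_\eps$, so one must check this does not lose information about behavior at $a^*$ — it does not, precisely because BMVD spends zero sojourn time at $a^*$.
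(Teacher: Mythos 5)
Your Stage 1 is essentially the paper's argument: the same passage to the limit in the discrete martingale identity, with the error split into a Riemann-sum term, a $\wt{\mathcal{L}}_{k}f$-versus-$\mathcal{L}f$ term handled separately on $S_0^{k_{\delta,f}}$ (uniform convergence) and on its complement (bounded generator plus stationarity of $\overline{m}_k$, giving a contribution of order $t\delta$), and a term controlled by the Skorokhod-continuity of $\omega\mapsto\int_{t_1}^{t_2}\mathcal{L}f(\omega(s))\,ds$ from Lemma \ref{integral-skorohkod-continuous}. That part is sound and matches the paper.

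Stage 2 contains a genuine gap. You assert that the martingale problem $(\mathcal{L},\mathcal{G})$ on $\overline{E}_0$ has a unique solution, namely a ``reflected BMVD'', and that the support condition on $\mathcal{G}$ ``forces reflecting boundary behavior''. This is backwards: every $f\in\mathcal{G}$ is supported in a compact subset of $(E_0\setminus\{a^*\})\cup B_\eps$, so the test functions vanish identically in a neighborhood of $\partial E_0$ and the martingale problem $(\mathcal{L},\mathcal{G})$ carries no information about the behavior at $\partial E_0$. The reflected process, the process absorbed at $\partial E_0$, and indeed any process agreeing with BMVD up to $\tau_{E_0}$ all solve $(\mathcal{L},\mathcal{G})$ on $\overline{E}_0$, so the uniqueness you invoke fails. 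Your fallback --- recovering the Dirichlet form from the generator together with $m$-symmetry --- is not carried out either: $m$-symmetry of the subsequential limit is not established anywhere in your argument (a weak limit of $\overline{m}_k$-symmetric processes is not automatically $m$-symmetric without further work), and you flag this yourself as the ``main obstacle'' without resolving it. The paper avoids the issue entirely by never attempting to identify the reflected process: since the conclusion only concerns $\{\wt{X}_t,\,t<\tau_{E_0}\}$, it suffices to have well-posedness of the martingale problem for the \emph{killed} process on $E_0$. That well-posedness is obtained from the Feller and strong Feller properties of BMVD (Theorem \ref{BMVD-non-drift}), which are inherited by the part process $\wh{X}$, via the criterion cited from \cite{MF}, together with a bp-closure argument showing that restricting the generator to $\mathcal{G}\cap\mathcal{D}(\wh{\mathcal{L}})$ does not enlarge the solution set. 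Replacing your global uniqueness claim on $\overline{E}_0$ with this localized uniqueness statement on $E_0$ is what is needed to close the argument.
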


\begin{proof}

Since the laws of $(\wt{X}^k)_{k\ge 1}$ are C-tight in $\mathbf{D}([0, T], \overline{E}_0, \rho)$, any sequence has a weakly convergent subsequence which is supported on the set of continuous paths. Denote by $\{\wt{X}^{k_j}: j\ge 1\}$ any such  weakly convergent subsequence, and denote by $\wt{X}$ its weak limit which is  continuous. By  Skorokhod representation theorem (see, e.g.,  \cite[Chapter 3, Theorem 1.8]{EK}), we may assume that $\{\wt{X}^{k_j},j\ge 1\}$ as well as $\wt{X}$ are defined on a common probability space $(\Omega, \mathcal{F}, \IP)$, so that  $\{\wt{X}^{k_j},j\ge 1\}$  converges almost surely to $\wt{X}$ in the Skorokhod topology.

For every $t\in [0, T]$, we let $\mathcal{M}_t^{k_j} :=\sigma(\wt{X}^{k_j}_s, s\le t)$, and $\mathcal{M}_t :=\sigma(\wt{X}_s, s\le t)$. It is obvious that $\mathcal{M}_t\subset \sigma\{\mathcal{M}_t^{k_j}: j\ge 1\}$. With the class of functions $\mathcal{G}$ defined in \eqref{def-class-G}, in the following we first show that $\wt{X}$ is a solution to the martingale problem $(\mathcal{L}, \mathcal{G})$ with initial distribution $m$ with respect to the filtration $\{\mathcal{M}_t\}_{t\ge 0}$. That is,  for  every $f\in \mathcal{G}$, we need to show that 
\begin{equation*}
\left\{f(\wt{X}_t)- f(\wt{X}_0)-\int_0^t \mathcal{L}f(\wt{X}_s)ds\right\}_{t\ge 0}
\end{equation*}
is a martingale with respect to $\{\mathcal{M}_t\}_{t\ge 0}$.  For $k\ge 1$, we again denote by
\begin{equation*}
T^k_0=0, \quad \text{and }T^k_l=\inf\{t> T^k_{l-1}:\, \wt{X}^k_{T_l}\neq \wt{X}^k_{T_l-}\} \quad \text{for }l=1,2,\dots,
\end{equation*}
i.e., $T^k_l$ is the $l^{\text{th}}$ holding time of $\wt{X}^k$. $\{T^k_l:l\ge 1\}$ are i.i.d. random variables, each being  exponentially distributed with mean $2^{-2k}$. 
By \cite[Corollary 5.4.1]{FOT}, we know that for any $k\ge 1$  and  any  $f\in \mathcal{G}$,
\begin{equation*}
\left\{f(\wt{X}^k_t)-f(\wt{X}_0^k)-\int_0^t\mathcal{\wt{L}}_kf(\wt{X}^k_s)ds\right\}_{t\ge 0}
\end{equation*}
is a martingale with respect to $\{\mathcal{M}^k_t\}_{t\ge 0}$.  Therefore, for any $0\le t_1<t_2\le T$ and any $A\in \mathcal{M}_{t_1}^{k_j}$, 
it holds
\begin{equation}\label{e:3.20}
\IE^{\overline{m}_{k_j}}\left[\left(f(\wt{X}^{k_j}_{t_2})-f(\wt{X}^{k_j}_{t_1})-\sum_{l:\; t_1<T_l^{k_j}\le t_2 }\mathcal{\wt{L}}_{k_j} f\left(\wt{X}^{k_j}_{T^{k_j}_l}\right)\left(T^{k_j}_{l}- T^{k_j}_{l-1}\right)\right)\mathbf{1}_{A}\right]=0.
\end{equation}
In the following, we first claim that for any $A\in \mathcal{M}_{t_1}$,
\begin{equation}\label{e:3.21}
\IE^{\overline{m}_{k_j}}\left[\left(f(\wt{X}^{k_j}_{t_2})-f(\wt{X}^{k_j}_{t_1})\right)\mathbf{1}_{A}\right]\rightarrow \IE^{m}\left[\left(f(\wt{X}_{t_2})-f(\wt{X}_{t_1})\right)\mathbf{1}_{A}\right], \quad \text{as }j\rightarrow \infty.
\end{equation}
To prove \eqref{e:3.21},  we first note that it has been proved in Lemma \ref{L:3.4} that the laws of $\{\wt{X}^k, \IP^{\overline{m}_k}, k\ge 1\}$ are C-tight in the space $\mathbf{D}([0, T], \overline{E}_0, \rho)$ equipped with the Skorokhod topology, therefore $\wt{X}$ as a subsequential limit is a continuous process.  Also note that it has been claimed at the beginning of this proof  that  one can assume $\{\wt{X}^{k_j},j\ge 1\}$ as well as $\wt{X}$ are defined on a common probability space $(\Omega, \mathcal{F}, \IP)$, so that   $\{\wt{X}^{k_j},j\ge 1\}$  converges almost surely to $\wt{X}$ in the Skorokhod topology. From the proof of \cite[Chapter 3, Theorem 7.8]{EK}), we can tell that: Given a squence  $\{\omega_n, n\ge 1\}$ convergent to $\omega$   in the Skorokhod topology and $\omega$ is continuous at some $t_0>0$, then $\lim_{n\rightarrow \infty}\omega_n(t_0) = \omega (t_0)$. This yields that outside of a zero probability subset of $(\Omega, \FF)$ with respect to $\IP$, $\wt{X}^{k_j}_t\to \wt{X}_t$ as $j\to \infty$ for all $t\in [0, T]$. Thus  for any   positive  continuous $\Phi: \overline{E}_0\rightarrow \IR$ such that $\|\Phi\|_\infty\le 1$, it follows from  dominated convergence theorem that
\begin{equation}\label{compute-P2.10-2-1}
 \lim_{j\rightarrow \infty}\IE^{\overline{m}_{k_j}}\left[\left(f(\wt{X}^{k_j}_{t_2})-f(\wt{X}^{k_j}_{t_1}) \right)\Phi(\wt{X}_{t_1})  \right]= \IE^{m}\left[\left(f(\wt{X}_{t_2})-f(\wt{X}_{t_1}) \right)\Phi(\wt{X}_{t_1})  \right].
\end{equation}
This implies \eqref{e:3.21}.   For the summation  term in \eqref{e:3.20},  we use  a  ``$3\eps$"-argument:
\begin{eqnarray}
&&\left|\IE^{\overline{m}_{k_j}}\left[\left(\sum_{l:\; t_1<T^{k_j}_l\le t_2 }\mathcal{\wt{L}}_{k_j} f\left(\wt{X}^{k_j}_{T^{k_j}_l}\right)\right)\left(T^{k_j}_l - T^{k_j}_{l-1}\right)\mathbf{1}_{A}\right] - \IE^m\left[\left(\int_{t_1}^{t_2} \mathcal{L} f(\wt{X}_s)ds\right)\mathbf{1}_A\right]\right| \nonumber
\\
&\le &\left| \IE^{\overline{m}_{k_j}}\left[\left(\sum_{l:\; t_1<T^{k_j}_1\le t_2 }\mathcal{\wt{L}}_{k_j} f\left(\wt{X}^{k_j}_{T^{k_j}_l}\right)\left(T^{k_j}_l - T^{k_j}_{l-1}\right)-\int_{t_1}^{t_2} \mathcal{\wt{L}}_{k_j} f (\wt{X}^{k_j}_s)ds\right)\mathbf{1}_{A}\right] \right|\nonumber
\\
&+& \left|\IE^{\overline{m}_{k_j}}\left[\left(\int_{t_1}^{t_2} \mathcal{\wt{L}}_{k_j} f (\wt{X}^{k_j}_s)ds-\int_{t_1}^{t_2} \mathcal{L} f(\wt{X}^{k_j}_s)ds\right)\mathbf{1}_{A}\right] \right|\nonumber
\\
&+& \left|\IE^{\overline{m}_{k_j}}\left[\left(\int_{t_1}^{t_2} \mathcal{L} f(\wt{X}^{k_j}_s)ds\right)\mathbf{1}_{A}\right] - \IE^m\left[\left(\int_{t_1}^{t_2} \mathcal{L} f(\wt{X}_s)ds\right)\mathbf{1}_{A}\right] \right| \nonumber
\\
&=&(I)+(II)+(III).\label{e:3.27}
\end{eqnarray}
Next we claim each of the  three terms on the right hand side of \eqref{e:3.27} goes to zero as $j\rightarrow \infty$.
For the (I) on the right hand side of \eqref{e:3.27}, noticing that each $T^{k_j}_l$ is exponentially distributed at rate $2^{-2k_j}$, from Lemma \ref{Lemma2.6} we have
\begin{eqnarray}
(I)&=&\left| \IE^{\overline{m}_{k_j}}\left[\left(\sum_{l:\; t_1<T^{k_j}_l\le t_2 }\mathcal{\wt{L}}_{k_j} f\left(\wt{X}^{k_j}_{T^{k_j}_l}\right)\left(T^{k_j}_l - T^{k_j}_{l-1}\right)-\int_{t_1}^{t_2} \mathcal{\wt{L}}_{k_j} f (\wt{X}^{k_j}_s)ds\right)\mathbf{1}_{A}\right] \right|\nonumber
\\
&\le & \|\mathcal{\wt{L}}_{k_j}f\|_\infty\cdot  \IE^{\overline{m}_{k_j}}\left[\left|t_1-\sup_l\{T_l^{k_j}<t_1\}\right|+\left|\inf_l\{T_l^{k_j}>t_2\}-t_2\right|\right] \nonumber
\\
&\le & \|\mathcal{\wt{L}}_{k_j}f\|_\infty\cdot 2\cdot 2^{-2k_j} =O(1)2^{-2k_j}\rightarrow 0, \quad \text{as }j\rightarrow \infty. \label{e:3.28}
\end{eqnarray}
For (III) on the right hand side of \eqref{e:3.27},  on account of Lemma  \ref{integral-skorohkod-continuous}, we have
\begin{eqnarray} \label{e:3.30}
&& \lim_{j\rightarrow \infty}\left|\IE^{\overline{m}_{k_j}}\left[\left(\int_{t_1}^{t_2} \mathcal{L} f(\wt{X}^{k_j}_s)ds\right)\mathbf{1}_{A}\right] - \IE^{m}\left[\left(\int_{t_1}^{t_2} \mathcal{L} f(\wt{X}_s)ds\right)\mathbf{1}_{A}\right] \right| =0.
\end{eqnarray}
Finally  to show the convergence of  (II) on the right hand side of \eqref{e:3.27}, for any $\delta>0$, we let $k_{\delta, f}$ be chosen as in Lemma \ref{Lemma2.6}. Thus
\begin{eqnarray}
&&\left|\IE^{\overline{m}_{k_j}}\left[\left(\int_{t_1}^{t_2} \mathcal{\wt{L}}_{k_j} f (\wt{X}^{k_j}_s)ds-\int_{t_1}^{t_2} \mathcal{L} f(\wt{X}^{k_j}_s)ds\right)\mathbf{1}_{A}\right] \right| \nonumber
\\
&\le &\left|\IE^{\overline{m}_{k_j}}\left[\int_{t_1}^{t_2} \mathcal{\wt{L}}_{k_j} f (\wt{X}^{k_j}_s)ds-\int_{t_1}^{t_2} \mathcal{L} f(\wt{X}^{k_j}_s)ds\right] \right|\nonumber
\\
&=&\Bigg|\IE^{\overline{m}_{k_j}}\bigg[\int_{t_1}^{t_2} \mathcal{\wt{L}}_{k_j} f (\wt{X}^{k_j}_s)\mathbf{1}_{\left\{\wt{X}^{k_j}_s\in S_0^{k_{\delta, f}}\right\}}ds +\int_{t_1}^{t_2} \mathcal{\wt{L}}_{k_j} f (\wt{X}^{k_j}_s)\mathbf{1}_{\left\{\wt{X}^{k_j}_s\notin S_0^{k_{\delta, f}}\right\}}ds \nonumber
\\
&-&\int_{t_1}^{t_2} \mathcal{L} f(\wt{X}^{k_j}_s)\mathbf{1}_{\left\{\wt{X}^{k_j}_s\in S_0^{k_{\delta, f}}\right\}}ds - \int_{t_1}^{t_2} \mathcal{L} f(\wt{X}^{k_j}_s)\mathbf{1}_{\left\{\wt{X}^{k_j}_s\notin S_0^{k_{\delta, f}}\right\}} ds\bigg] \Bigg|\nonumber
\\
&\le & \IE^{\overline{m}_{k_j}}\Bigg[\bigg|\int_{t_1}^{t_2} \mathcal{\wt{L}}_{k_j} f (\wt{X}^{k_j}_s)\mathbf{1}_{\left\{\wt{X}^{k_j}_s\in S_0^{k_{\delta, f}}\right\}}ds +\int_{t_1}^{t_2} \mathcal{\wt{L}}_{k_j} f (\wt{X}^{k_j}_s)\mathbf{1}_{\left\{\wt{X}^{k_j}_s\notin S_0^{k_{\delta, f}}\right\}}ds \nonumber
\\
&-&\int_{t_1}^{t_2} \mathcal{L} f(\wt{X}^{k_j}_s)\mathbf{1}_{\left\{\wt{X}^{k_j}_s\in S_0^{k_{\delta, f}}\right\}}ds - \int_{t_1}^{t_2} \mathcal{L} f(\wt{X}^{k_j}_s)\mathbf{1}_{\left\{\wt{X}^{k_j}_s\notin S_0^{k_{\delta, f}}\right\}} ds\bigg|\Bigg]\nonumber
\\
&\le & \IE^{\overline{m}_{k_j}}\Bigg[\bigg|\int_{t_1}^{t_2} \mathcal{\wt{L}}_{k_j} f (\wt{X}^{k_j}_s)\mathbf{1}_{\left\{\wt{X}^{k_j}_s\in S_0^{k_{\delta, f}}\right\}}ds -\int_{t_1}^{t_2} \mathcal{L} f(\wt{X}^{k_j}_s)\mathbf{1}_{\left\{\wt{X}^{k_j}_s\in S_0^{k_{\delta, f}}\right\}})ds \bigg|\Bigg] \nonumber
\\
&+& \IE^{\overline{m}_{k_j}}\Bigg[ \bigg|\int_{t_1}^{t_2} \mathcal{\wt{L}}_{k_j} f (\wt{X}^{k_j}_s)\mathbf{1}_{\left\{\wt{X}^{k_j}_s\notin S_0^{k_{\delta, f}}\right\}}ds-\int_{t_1}^{t_2} \mathcal{L} f(\wt{X}^{k_j}_s)\mathbf{1}_{\left\{\wt{X}^{k_j}_s\notin S_0^{k_{\delta, f}}\right\}} ds\bigg| \Bigg].\label{e:2.40}
\end{eqnarray}
For the first term on the right hand side of \eqref{e:2.40}, by Lemma \ref{Lemma2.6} (ii),  we have
\begin{eqnarray}
&& \IE^{\overline{m}_{k_j}}\Bigg[\bigg|\int_{t_1}^{t_2} \mathcal{\wt{L}}_{k_j} f (\wt{X}^{k_j}_s)\mathbf{1}_{\left\{\wt{X}^{k_j}_s\in S_0^{k_{\delta, f}}\right\}}ds -\int_{t_1}^{t_2} \mathcal{L} f(\wt{X}^{k_j}_s)\mathbf{1}_{\left\{\wt{X}^{k_j}_s\in S_0^{k_{\delta, f}}\right\}}  ds \bigg| \Bigg] \nonumber
 \\
 &\le & \IE^{\overline{m}_{k_j}}\Bigg[\int_{t_1}^{t_2} \left|\mathcal{\wt{L}}_{k_j} f (\wt{X}^{k_j}_s)- \mathcal{L} f(\wt{X}^{k_j}_s)\right| \mathbf{1}_{\left\{\wt{X}^{k_j}_s\in S_0^{k_{\delta, f}}\right\}}ds\Bigg] \stackrel{j\rightarrow \infty}{\rightarrow} 0. \label{e:2.42}
\end{eqnarray}
For the second term on the right hand side of \eqref{e:2.40}, we first  note that $\overline{m}_{k_j}$ is an invariant measure for $\wt{X}^{k_j}$. Therefore,  
\begin{equation}\label{e:2.45}
 \IP^{\overline{m}_{k_j}}\left[\wt{X}^{k_j}_s\notin S_0^{k_{\delta, f}}\right] = \overline{m}_{k_j}\left( E^k_0\backslash S^{k_{\delta, f}}_0 \right)\stackrel{\text{Lemma \ref{Lemma2.6} (i)}}{\le } \delta
\end{equation}
Given any $\delta>0$,   for  $k_j\ge k_{\delta, f} $ where $k_{\delta, f}$ is given in Lemma \ref{Lemma2.6}, it holds
\begin{eqnarray}
&&\Bigg|\IE^{\overline{m}_{k_j}}\bigg[\int_{t_1}^{t_2} \mathcal{\wt{L}}_{k_j} f (\wt{X}^{k_j}_s)\mathbf{1}_{\left\{\wt{X}^{k_j}_s\notin S_0^{k_{\delta, f}}\right\}}ds+\int_{t_1}^{t_2} \mathcal{L} f(\wt{X}^{k_j}_s)\mathbf{1}_{\left\{\wt{X}^{k_j}_s\notin S_0^{k_{\delta, f}}\right\}} ds\bigg] \Bigg| \nonumber
\\
&\le & \IE^{\overline{m}_{k_j}}\Bigg[\bigg|\int_{t_1}^{t_2} \mathcal{\wt{L}}_{k_j} f (\wt{X}^{k_j}_s)\mathbf{1}_{\left\{\wt{X}^{k_j}_s\notin S_0^{k_{\delta, f}}\right\}}ds+\int_{t_1}^{t_2} \mathcal{L} f(\wt{X}^{k_j}_s)\mathbf{1}_{\left\{\wt{X}^{k_j}_s\notin S_0^{k_{\delta, f}}\right\}} ds\bigg|\Bigg] \nonumber
\\
&\le &\IE^{\overline{m}_{k_j}}\Bigg[\int_{t_1}^{t_2} \left|\mathcal{\wt{L}}_{k_j} f (\wt{X}^{k_j}_s)+ \mathcal{L} f(\wt{X}^{k_j}_s)\right| \mathbf{1}_{\left\{\wt{X}^{k_j}_s\notin S_0^{k_{\delta, f}}\right\}}ds\Bigg] \nonumber
\\
\text{(Lemma \ref{Lemma2.6})}&\le &\left( C_1+\|\mathcal{L} f \|_\infty \right)\int^{t_2}_{t_1}  \IP^{\overline{m}_{k_j}}\left[\wt{X}^{k_j}_s\notin S_0^{k_{\delta, f}}\right]   ds \nonumber
\\
&\stackrel{\eqref{e:2.45}}{\le} &  \left( C_1+\|\mathcal{L} f \|_\infty \right) (t_2-t_1) \delta,\nonumber
\end{eqnarray}
where $C_1$ is the same as in Lemma \ref{Lemma2.6}.  This shows that 
\begin{equation}\label{e:2.43}
\Bigg|\IE^{\overline{m}_{k_j}}\bigg[\int_{t_1}^{t_2} \mathcal{\wt{L}}_{k_j} f (\wt{X}^{k_j}_s)\mathbf{1}_{\left\{\wt{X}^{k_j}_s\notin S_0^{k_{\delta, f}}\right\}}ds+\int_{t_1}^{t_2} \mathcal{L} f(\wt{X}^{k_j}_s)\mathbf{1}_{\left\{\wt{X}^{k_j}_s\notin S_0^{k_{\delta, f}}\right\}} ds\bigg] \Bigg|\stackrel{j\rightarrow \infty}{\rightarrow} 0.
\end{equation}
Combining \eqref{e:2.42} and \eqref{e:2.43} shows that the right hand side of \eqref{e:2.40}, i.e., (II) on the right hand side of \eqref{e:3.27} goes to zero as $j\rightarrow \infty$. This again combined with \eqref{e:3.28} and \eqref{e:3.30} verifies that all three terms, (I)-(III) on the right hand side of \eqref{e:3.27} tend to zero as $j\rightarrow \infty$.   This combined with \eqref{e:3.21} establishes \eqref{e:3.20} for any $f\in \mathcal{G}$ and any $A\in \mathcal{M}^{k_j}_{t_1} $, given any $0\le t_1<t_2\le T$. In view of the fact that 
$\mathcal{M}_t\subset \sigma\{\mathcal{M}_t^{k_j}:\, j\ge 1\}$ for all $t\ge 0$, \eqref{e:3.20} holds for all $f\in \mathcal{G}$ and all $A\in \mathcal{M}_{t_1}$. Namely, for any $f\in \mathcal{G}$,
\begin{equation*}
\left\{f(\wt{X}_t)- f(\wt{X}_0)-\int_0^t \mathcal{L}f(\wt{X}_s)ds\right\}_{t\in [0, T]}
\end{equation*}
is a martingale with respect to  $\{\mathcal{M}_t\}_{t\ge 0}$.  This shows that $\wt{X}$ is a solution to the martingale problem $(\mathcal{L}, \mathcal{G})$. Finally, to show the last claim of this proposition, we denote  the infinitesimal generator of $\wh{X}$ by $(\wh{\mathcal{L}},\mathcal{D}(\wh{\mathcal{L}}))$.  $\mathcal{D}(\wh{\mathcal{L}})$ can be described as follows: 
$u\in \mathcal{D}(\wh{\mathcal{L}})$   if and only if: $u\in \mathcal{D}(\wh{\EE})$  given in \eqref{killed-bmvd}, and there is an $f\in L^2(E_0,m)$  satisfying
\begin{equation*}
\wh{\EE}(u, v)=-\int_{E_0} f(x)v(x)m(dx) \quad \textrm{for every }v\in \mathcal{D}(\wh{\EE}),
\end{equation*}
in which case $f=\wh{\mathcal{L}}u$.   It also holds that $\wh{\mathcal{L}}=\mathcal{L}$ on $\mathcal{D}(\wh{\mathcal{L}})$. It is mentioned in Theorem \ref{BMVD-non-drift}  that BMVD is a Feller process with strong Feller property, therefore so is its part process $\wh{X}$.  By \cite[Theorem 3.1, Remark 3.3]{MF}, this implies that the  $\mathbf{D}([0, \infty), E_0, \rho)$  martingale problem $(\wh{\mathcal{L}}, \mathcal{D}(\wh{\mathcal{L}}))$ is well-posed with its unique solution being $\wh{X}$. It is not hard to see that the bp-closures  (whose definition can be found, e.g., \cite[Definition 3.4.3]{AB})  of the graphs of  $(\mathcal{L}|_{\mathcal{G}\cap \mathcal{D}(\wh{\mathcal{L}})}, \mathcal{G}\cap \mathcal{D}(\wh{\mathcal{L}}))$ and $(\wh{\mathcal{L}}, \mathcal{D}(\wh{\mathcal{L}}))$ are the same. Hence by \cite[Proposition 3.4.19]{AB} the  $\mathbf{D}([0, \infty), E_0, \rho)$ martingale problem $(\mathcal{L}|_{\mathcal{G}\cap \mathcal{D}(\wh{\mathcal{L}})}, \mathcal{G}\cap \mathcal{D}(\wh{\mathcal{L}}))$ is also well-posed with the unique solution $\wh{X}$.

 $\wt{X}$ is a solution to the  martingale problem $(\mathcal{L}, \mathcal{G})$, therefore $\{\wt{X}_t, t<\tau_{E_0}, \IP^m\}$  must be  a solution to the $\mathbf{D}([0, \infty), E_0, \rho)$   martingale problem $(\mathcal{L}|_{\mathcal{G}\cap \mathcal{D}(\wh{\mathcal{L}})}, \mathcal{G}\cap \mathcal{D}(\wh{\mathcal{L}}))$.  Since we have  claimed at the end of last paragraph that  the  $\mathbf{D}([0, \infty), E_0, \rho)$ martingale problem $(\mathcal{L}|_{\mathcal{G}\cap \mathcal{D}(\wh{\mathcal{L}})}, \mathcal{G}\cap \mathcal{D}(\wh{\mathcal{L}}))$ is well-posed with the unique solution being $\wh{X}$, the proof is complete. 
\end{proof}

We prepare the following lemma before proving the main theorem. Let $E_0^\Delta$ be the one point compactification of $E_0$, where $\Delta$ stands for the cemetry point. The metric space $(E_0, \rho)$ can be extended to a complete and separable metric space $(E_0^\Delta, \rho)$ naturally as follows: 
\begin{equation}
\rho(x, \Delta):=\rho(x, \partial E_0), \quad \text{for }x\in E_0.
\end{equation}
\begin{lem}\label{L:2.10}
The laws of $\{\wh{X}^{k}, \IP^{\overline{m}_{k}}\}$ are also C-tight in $\mathbf{D}([0, T], E_0^\Delta,\rho)$.
\end{lem}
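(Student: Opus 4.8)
The plan is to deduce the C-tightness of $\{\wh{X}^{k}\}$ from that of $\{\wt{X}^{k}\}$ (Lemma~\ref{L:3.4}), using the fact that $\wh{X}^{k}$ is nothing but $\wt{X}^{k}$ killed upon first hitting $\partial E_0^k$. First I would record the relation between the two families. On $E_0^k\setminus\partial E_0^k$ the speed functions and road maps of $X^k$ and of $\wt{X}^k$ coincide: by the definition of $\partial E_0^k$, every $G^k$-neighbour of a vertex $x\notin\partial E_0^k$ already lies in $E_0^k$, so $v_k(x)=\overline{v}_k(x)$ and the two neighbour sets agree there (this includes $x=a^*_k$, which by the standing assumption $\rho(a^*,\partial E_0)>16\eps$ is an interior vertex). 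Hence the part process of $X^k$ killed on hitting $\partial E_0^k$ has the same law as the part process of $\wt{X}^k$ killed on hitting $\partial E_0^k$, and $\wt{X}^k$ is the Ikeda--Nagasawa--Watanabe resurrection of this killed process. The only consequence I need is the resulting path coupling: on a common probability space one may realise $(\wt{X}^k,\IP^{\overline{m}_k})$ and $(\wh{X}^k,\IP^{\overline{m}_k})$ (the latter sent to the cemetery $\Delta$ after its lifetime) so that
\[
\wh{X}^k_t=\wt{X}^k_t\ \text{ for } t<\sigma_k,\qquad \wh{X}^k_t=\Delta\ \text{ for }t\ge\sigma_k,
\]
where $\sigma_k:=\inf\{t>0:\wt{X}^k_t\in\partial E_0^k\}$; since $E_0^k$ is finite and $G_0^k$ is connected, $\wt{X}^k$ is irreducible and $\sigma_k<\infty$ $\IP^{\overline{m}_k}$-a.s.

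Next I would compare moduli of continuity. Two elementary geometric facts enter. (a) Every jump of $\wt{X}^k$ has $\rho$-size $O(2^{-k})$: a nearest-neighbour jump inside $D_\eps^k$ satisfies $\rho(x,y)\le|x-y|=2^{-k}$, while a jump from $a^*_k$ to a neighbour $y$ has $\rho(a^*_k,y)=|y|_\rho\le 2\cdot 2^{-k}$ if $y\in D_\eps^k$ (as in the proof of Proposition~\ref{P:2.1}) and $\rho(a^*_k,y)=|y|=2^{-k}$ if $y\in 2^{-k}\IZ_+$. (b) The killing jump is $O(2^{-k})$: $\rho(\wt{X}^k_{\sigma_k-},\Delta)=\rho(\wt{X}^k_{\sigma_k-},\partial E_0)$, and $\wt{X}^k_{\sigma_k-}$ is a $G^k$-neighbour of some $x\in\partial E_0^k$, which in turn lies within Euclidean distance $2^{-k}$ of $\partial E_0$ (an edge of $G^k$ issuing from $x$ has an endpoint outside $E_0$); since $\partial E_0$ is bounded away from both $a^*$ and $B_\eps$, the geodesic distance there agrees with the Euclidean one, whence $\rho(\wt{X}^k_{\sigma_k-},\partial E_0)\le C2^{-k}$. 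From (a)--(b), $\sup_{t\le T}\rho(\wh{X}^k_t,\wh{X}^k_{t-})\le C2^{-k}\to 0$ deterministically, which settles the ``no jumps in the limit'' part of C-tightness. Moreover, for any subinterval $[a,b]\subset[0,T]$ and $s,t\in[a,b]$ one has $\rho(\wh{X}^k_s,\wh{X}^k_t)\le\sup_{u,v\in[a,b]}\rho(\wt{X}^k_u,\wt{X}^k_v)+C2^{-k}$: if $b<\sigma_k$ the two processes agree on $[a,b]$; otherwise the only new term is $\rho(\wt{X}^k_s,\Delta)\le\rho(\wt{X}^k_s,\wt{X}^k_{\sigma_k-})+\rho(\wt{X}^k_{\sigma_k-},\Delta)$, bounded by the $\wt{X}^k$-oscillation on $[a,b]$ plus $C2^{-k}$ via (b). Feeding the near-optimal partition for $\wt{X}^k(\omega)$ into the definition \eqref{def-dubiu} of the modulus, this yields pathwise
\[
w_\rho(\wh{X}^k,\theta,T)\le w_\rho(\wt{X}^k,\theta,T)+C2^{-k}\qquad\text{for all }\theta>0,\ k\ge 1.
\]

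Finally I would conclude. Given $\eta,\delta>0$, choose $\theta$ and $k_0$ from the C-tightness of $\{\wt{X}^k\}$ (estimate \eqref{e:3.13-1}) so that $\IP^{\overline{m}_k}\big[w_\rho(\wt{X}^k,\theta,T)>\eta/2\big]<\delta$ for $k\ge k_0$; then the displayed inequality gives $\IP^{\overline{m}_k}\big[w_\rho(\wh{X}^k,\theta,T)>\eta\big]<\delta$ for all sufficiently large $k$, while the remaining finitely many $k$ are absorbed by the tightness of each individual $\wh{X}^k$. Since $E_0^\Delta$ is compact (so compact containment is automatic) and the jumps vanish uniformly by (a)--(b), \cite[Chapter~3, Theorem~10.2]{EK} gives that $\{\wh{X}^k,\IP^{\overline{m}_k}\}$ is C-tight in $\mathbf{D}([0,T],E_0^\Delta,\rho)$. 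The main obstacle is the first step, namely justifying rigorously that $\wt{X}^k$ is the resurrection of $\wh{X}^k$ and extracting the path coupling displayed above; once that is in hand, the geometric estimates (a)--(b) and the partition bookkeeping with the geodesic metric are routine.
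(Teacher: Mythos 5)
Your proposal is correct and follows essentially the same route as the paper: identify $\wt{X}^k$ as the Ikeda--Nagasawa--Watanabe resurrection of $\wh{X}^k$, deduce the pathwise bound $w_\rho(\wh{X}^k,\theta,T)\le w_\rho(\wt{X}^k,\theta,T)+O(2^{-k})$, and transfer the C-tightness of $\{\wt{X}^k\}$ from Lemma \ref{L:3.4} via the criteria in \cite{EK}. The one step you flag as the ``main obstacle'' --- rigorously justifying the resurrection relation --- is handled in the paper by writing the Beurling--Deny decomposition $\wh{\EE}^k(f,f)=\wt{\EE}^k(f,f)+\int_{E_0^k}f^2\,d\kappa$ with an explicit killing measure $\kappa$ supported on $\partial E_0^k$ and invoking \cite[Theorem 5.2.17]{CF}, which is a cleaner substitute for your road-map comparison.
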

\begin{proof}
First of all, in view of  \eqref{DF-RWVD-form} and the fact that $\{\wh{X}^k, k\ge 1\}$ is the part process of $\{X^k, k\ge 1\}$, we note that $\{\wh{X}^k, k\ge 1\}$ can be characterized by the following regular symmetric Dirichlet form on $L^2(E^k, m_k)$:
\begin{align}\label{DF-killed}
\left\{
\begin{aligned}
&\mathcal{D}(\wh{\EE}^{k})=\left\{f: f\in  \mathcal{D}(\EE^k),\, \text{supp}[f]\subset E_0^k\right\},
\\
&\wh{\EE}^{k}(f, f)= \frac{1}{8}\sum_{\substack{e^o_{xy}:\; e_{xy}\in G^k_e,\\ x,y\in D^k_\eps\cup \{a_k^*\} }} \left(f(x)-f(y)\right)^2 +\frac{2^k}{4}\sum_{\substack{e^o_{xy}:\;e_{xy}\in G^k_e,\\ x,y\in 2^{-k}\IZ_+\cup \{a_k^*\} }}\left(f(x)-f(y)\right)^2.
\end{aligned}
\right.
\end{align}
Recall that $v_k$ and $\bar{v}_k$ are both defined  in Section 1, along with \eqref{def-mk} and \eqref{def-bar-mk}.   Comparing the \eqref{DF-killed} with $(\mathcal{D}(\wt{\EE}^k), \wt{\EE}^k))$ given in \eqref{reflected-form}, we see that by letting 
\begin{equation*}
\kappa(dx):=\frac{1}{4}\left(v_k(x)-\bar{v}_k(x)\right)\mathbf{1}_{\{x\in D^k_\eps\}}+\frac{2^k}{2}\left(v_k(x)-\bar{v}_k(x)\right)\mathbf{1}_{\{x\in 2^{-k}\IZ_+\}}, \quad \text{for }x\in E^k_0,
\end{equation*}
one can then  write  for $f\in \mathcal{D}(\wh{\EE}^{k})$  vanishing outside of $E_0^k$ that
\begin{eqnarray}\label{beurling-deny-whXk}
\wh{\EE}^k(f, f)&=& \frac{1}{8}\sum_{\substack{e^o_{xy}:\; e_{xy}\in G^k_e,\\ x,y\in D^k_\eps\cup \{a_k^*\} }} \left(f(x)-f(y)\right)^2 +\frac{2^k}{4}\sum_{\substack{e^o_{xy}:\;e_{xy}\in G^k_e,\\ x,y\in 2^{-k}\IZ_+\cup \{a_k^*\} }}\left(f(x)-f(y)\right)^2 \nonumber
\\
&=& \frac{1}{8}\sum_{\substack{e^o_{xy}:\;e_{xy}\in G^k_{0, e},\\ x,y\in D^k_\eps\cup \{a_k^*\} }}\left(f(x)-f(y)\right)^2 +\frac{2^k}{4}\sum_{\substack{e^o_{xy}:\;e_{xy}\in G^k_{0, e},\\ x,y\in 2^{-k}\IZ_+\cup \{a_k^*\} }}\left(f(x)-f(y)\right)^2 \nonumber
\\
&+&  \frac{1}{8}\sum_{\substack{e^o_{xy}:\;e_{xy}\in (G^k_e\backslash  G^k_{0, e}),\\ x,y\in D^k_\eps\cup \{a_k^*\} }}\left(f(x)-f(y)\right)^2 +\frac{2^k}{4}\sum_{\substack{e^o_{xy}:\;e_{xy}\in (G^k_e\backslash  G^k_{0, e}),\\ x,y\in 2^{-k}\IZ_+\cup \{a_k^*\} }}\left(f(x)-f(y)\right)^2 \nonumber
\\
&=& \wt{\EE}^k(f, f)+\frac{1}{4}\sum_{\substack{x,y\in D^k_\eps\cup \{a_k^*\}\\ x\in E_0^k, \,y\notin E_0^k, \\x\leftrightarrow y \text{ in } G^k}}f(x)^2 +\frac{2^k}{2}\sum_{\substack{ x,y\in 2^{-k}\IZ_+\cup \{a_k^*\} \\ x\in E_0^k,\, y\notin E_0^k, \\x\leftrightarrow y \text{ in } G^k }}f(x)^2 \nonumber
\\
& =&\wt{\EE}^k(f, f)+\int_{E_0^k} f(x)^2\kappa(dx).
\end{eqnarray}
Since $\wt{X}^k$ is a pure jump process without killing, \eqref{beurling-deny-whXk} can be viewed as the Beurling-Deny decomposition of $(\mathcal{D}(\wt{\EE}^k), \wt{\EE}^k))$. Also since $E^k_0$ is a bounded space containing only finitely many isolated points, by \cite[Theorem 5.2.17]{CF}, it is clear that $(\mathcal{D}(\wt{\EE}^k), \wt{\EE}^k))$ is a resurrected Dirichlet form of $(\mathcal{D}(\wh{\EE}^k), \wh{\EE}^k))$, and $\wt{X}^k$ is a resurrected process of $\wh{X}^k$, for every $k\in \mathbb{N}$. By the same argument as in the proof to \cite[Theorem 2.1]{BBC}, it can be proved that $\wt{X}^k$ can be equivalently constructed from $\wh{X}^k$ through the Ikeda–Nagasawa–Watanabe ``piecing together" procedure developed in \cite{INW}: Let $\tau_{E^k_0}^k$ be the lifetime of $\wh{X}^k$. Let $\wt{X}^k_t(\omega)=\wh{X}^k_t(\omega)$ for $t<\tau_{E^k_0}^k$. Let $\wt{X}^k_{\tau_{E^k_0}^k}(\omega)=\wh{X}^k_{\tau_{E^k_0}^k-}(\omega)$ and glue an independent copy of $\wh{X}^k$ starting from $\wh{X}^k_{\tau_{E^k_0}^k-}(\omega)$ to $\wt{X}^k_{\tau_{E^k_0}^k}(\omega)$. Iterate this procedure countably many times. 

Recall the definition of ``$w_\rho$" given in \eqref{def-dubiu}.   From the Ikeda-Nagasawa-Watanabe `` piecing together" procedure, one can tell that  for any  $\omega\in \Omega$ and any $k\in \mathbb{N}$ that
\begin{eqnarray}
w_\rho(\wh{X}^k(\omega), \theta, T)\le w_\rho(\wt{X}^k(\omega), \theta, T)+\rho(\wh{X}^k_{\tau^k_{E_0^k}-}(\omega), \Delta) 
\le w_\rho(\wt{X}^k(\omega), \theta, T)+2^{-k}. \label{L:2.10-compute-1}
\end{eqnarray}
We slightly abuse the notation in the above inequalities by using the same ``$w_\rho$"  for both the case where the underlying metric space is $(E_0^\Delta, \rho)$ as well as when the underlying metric space is $(\overline{E}_0, \rho)$. Recall that we have established \eqref{e:3.12-1} and \eqref{e:3.13-1}  in the proof to Lemma \ref{L:3.4}.  Hence by \eqref{L:2.10-compute-1}, it holds  for $(\wh{X}^k, \IP^{\overline{m}_k})$   that:
\begin{description}
\item{(i).} For any $T>0$, $\delta>0$, there exist $k_1\in \mathbb{N}$ and $K>0$ such that for all $k\ge k_1$,
\begin{equation*}\label{e:3.12-1-v2}
\IP^{\overline{m}_k}\left[ \sup_{t\in [0, T]} \big|\wh{X}^k_t\big|_\rho>N\right]<\delta. 
\end{equation*} 
\item{(ii).} For any $T>0$, $\delta_1, \delta_2>0$, there exist $\theta>0$ and $k_2>0$ such that for all $k\ge k_2$,
\begin{equation*}\label{e:3.13-1-v2}
\IP^{\overline{m}_k}\left[w_\rho\left( \wh{X}^k  ,\, \theta,\, T\right)>\delta_1\right]<\delta_2. 
\end{equation*}
\end{description}
Thus by  \cite[Chapter 3, Theorem 2.2 and 7.2]{EK},  the laws of $\{\wh{X}^{k}, \IP^{\overline{m}_{k}}\}$ are tight in $\mathbf{D}([0, T], E_0^\Delta,\rho)$. Finally, in view of \cite[Chapter 3, Theorem 10.2]{EK}, the laws of $\{\wh{X}^{k}, \IP^{\overline{m}_{k}}\}$ are C-tight in $\mathbf{D}([0, T], E_0^\Delta,\rho)$.
\end{proof}

The next theorem is  essentially  a restatement of our main result, Theorem \ref{main-result}. For notational convenience, for $k\ge 1$, we set
\begin{equation*}
D^k_0:=S^k_0\cup \{x: x=a^*_k\text{ or }x\leftrightarrow a^*_k\text{ in } G^k_0\},
\end{equation*}
where $S_0^k$ is defined in \eqref{def-S0k}.   $\{D^k_0\}_{k\ge 1}$ is an increasing sequence of  subsets of  $E_0$. 
\begin{thm}
$\{(\wh{X}^{k},\IP^{\overline{m}_k}),\, k\ge 1\}$ converges weakly to $(\wh{X},\,\IP^m)$.
\end{thm}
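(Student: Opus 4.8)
The plan is to leverage the C-tightness of $\{\wh{X}^{k},\IP^{\overline{m}_k}\}$ in $\mathbf{D}([0,T],E_0^\Delta,\rho)$ from Lemma \ref{L:2.10}, so that it suffices to identify every subsequential weak limit as $(\wh{X},\IP^m)$. Along any subsequence on which $\wh{X}^{k_j}$ converges weakly, I pass to a further subsequence on which the pairs $(\wh{X}^{k_j},\wt{X}^{k_j})$ converge jointly (possible since $\{\wt{X}^{k}\}$ is C-tight by Lemma \ref{L:3.4}); write $(Y,\wt{X})$ for the limit. By Proposition \ref{P:3.8} the second marginal $\wt{X}$ is a continuous solution of the martingale problem $(\mathcal{L},\mathcal{G})$ started from $m$, and the law of $\{\wt{X}_t,\,t<\tau_{E_0}\}$ coincides with that of $\wh{X}$, where $\tau_{E_0}:=\inf\{t:\wt{X}_t\in\partial E_0\}$. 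By the Skorokhod representation theorem I realize the chosen subsequence on one probability space with a.s. convergence; since $\wt{X}$ is continuous, $\wt{X}^{k_j}\to\wt{X}$ locally uniformly a.s. The key structural input, from the Ikeda--Nagasawa--Watanabe description in the proof of Lemma \ref{L:2.10}, is that $\wh{X}^{k_j}$ is precisely $\wt{X}^{k_j}$ run until its lifetime $\tau^{k_j}:=\inf\{t:\wt{X}^{k_j}_t\in\partial E^{k_j}_0\}$ and sent to $\Delta$ afterwards; being a statement about the joint law, this relation persists on the Skorokhod space. The whole problem thus reduces to showing
\begin{equation*}
\tau^{k_j}\longrightarrow\tau_{E_0}\qquad\text{in probability.}
\end{equation*}

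For the lower bound, suppose $\tau^{k_j}\to c$ along a subsequence. Since $\rho(a^*,\partial E_0)>16\eps$, the geodesic and Euclidean distances agree near $\partial E_0$, and every vertex of $\partial E^{k_j}_0$ lies within $O(2^{-k_j})$ of $\partial E_0$; hence $\wt{X}^{k_j}_{\tau^{k_j}}$ is within $O(2^{-k_j})$ of $\partial E_0$, and by local uniform convergence and continuity of $\wt{X}$ it converges to $\wt{X}_c$, so $\wt{X}_c\in\partial E_0$ and $c\ge\tau_{E_0}$. Thus $\liminf_j\tau^{k_j}\ge\tau_{E_0}$ a.s.

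For the upper bound $\limsup_j\tau^{k_j}\le\tau_{E_0}$, which is the \emph{crux}, I would combine the Lipschitz regularity of $E_0$ with a near-boundary estimate for the walks. Fix $\eta>0$ small. A neighborhood of $\partial E_0$ is disjoint from $a^*$, so there $\wt{X}^{k_j}$ is an ordinary continuous-time simple random walk on $2^{-k_j}\IZ^2$ or $2^{-k_j}\IZ$ of speed $2^{2k_j}$, while the exterior cone condition for the Lipschitz domain $E_0$ makes $\partial E^{k_j}_0$ a ``killing layer'' that such a walk, started within $\eta$ of $\partial E_0$, reaches within time $C\eta^2$ with probability bounded below uniformly in $k_j$ large. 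Let $\sigma^{k_j}_\eta$ be the first entrance of $\wt{X}^{k_j}$ into the $\eta$-collar of $\partial E_0$; this is a stopping time with $\sigma^{k_j}_\eta\le\tau^{k_j}$ for $j$ large, and by local uniform convergence $\limsup_j\sigma^{k_j}_\eta\le\sigma_{\eta/2}$, where $\sigma_\delta:=\inf\{t:\rho(\wt{X}_t,\partial E_0)<\delta\}$ increases to $\tau_{E_0}$ as $\delta\downarrow0$. Applying the strong Markov property of $\wt{X}^{k_j}$ at $\sigma^{k_j}_\eta$ and iterating the hitting estimate, $\wt{X}^{k_j}$ is killed within a further time $C\eta^2\log(1/\delta)$ with probability at least $1-\delta$, so $\IP(\tau^{k_j}>\tau_{E_0}+\epsilon)\le\delta+o(1)$ for $\eta$ small (depending on $\epsilon$) and $j$ large; letting $\delta\downarrow0$ gives the claim on $\{\tau_{E_0}<\infty\}$, while on $\{\tau_{E_0}=\infty\}$ the lower bound already forces $\tau^{k_j}\to\infty$.

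With $\tau^{k_j}\to\tau_{E_0}$ in probability and $\wt{X}^{k_j}\to\wt{X}$ locally uniformly, the killed paths $\wh{X}^{k_j}_t=\wt{X}^{k_j}_t\mathbf{1}_{\{t<\tau^{k_j}\}}+\Delta\,\mathbf{1}_{\{t\ge\tau^{k_j}\}}$ converge, uniformly on $[0,T]$ in the metric of $E_0^\Delta$, to the path $Y$ given by $Y_t=\wt{X}_t$ for $t<\tau_{E_0}$ and $Y_t=\Delta$ for $t\ge\tau_{E_0}$; note $\rho(\wt{X}_t,\Delta)=\rho(\wt{X}_t,\partial E_0)\to0$ as $t\uparrow\tau_{E_0}$, so $Y$ is continuous, consistent with the C-tightness. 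By Proposition \ref{P:3.8}, $Y$ has exactly the law of $\wh{X}$, the part process of BMVD with parameter $\eps$ killed upon leaving $E_0$. Since every subsequential limit of $\{(\wh{X}^{k},\IP^{\overline{m}_k})\}$ is $(\wh{X},\IP^m)$, the C-tight family converges weakly to $(\wh{X},\IP^m)$. The main obstacle is precisely the upper bound $\limsup_j\tau^{k_j}\le\tau_{E_0}$: hitting times of $\partial E_0$, a set with empty interior, are not continuous under Skorokhod convergence in general, so one genuinely needs the Lipschitz structure of $E_0$ together with the quantitative fact that the walks reach the killing layer $\partial E^{k_j}_0$ quickly once they enter a thin collar of $\partial E_0$.
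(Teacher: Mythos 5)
Your route is genuinely different from the paper's. The paper never isolates the convergence of the killing times as a separate statement: after establishing C-tightness of $\{\wh{X}^k\}$ exactly as you do (via the Ikeda--Nagasawa--Watanabe relation with $\wt{X}^k$ in Lemma \ref{L:2.10}), it identifies the limit through finite-dimensional distributions over compact sets $A_1,\dots,A_n\subset E_0$, using that such sets sit inside $D_0^k$ for large $k$, that $\wh{X}^{k_j}$ and $\wt{X}^{k_j}$ agree before the first hitting of $\partial E_0^{k_j}$, and then invoking the last assertion of Proposition \ref{P:3.8} (the law of $\{\wt{X}_t,\ t<\tau_{E_0}\}$ is that of $\wh{X}$) to pass to the limit. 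Your reformulation --- reduce everything to $\tau^{k_j}\to\tau_{E_0}$ in probability and then push the killed paths through --- is cleaner conceptually and makes explicit a difficulty that the paper's computation keeps implicit, namely that the constraint $\{t_n<\wt{\tau}^{k_{j_l}}\}$ must survive the weak limit. Your lower bound, the coupling of $\wh{X}^{k_j}$ with $\wt{X}^{k_j}$ through the Skorokhod representation, and the final passage from $\tau^{k_j}\to\tau_{E_0}$ to convergence of the killed paths in $\mathbf{D}([0,T],E_0^\Delta,\rho)$ are all sound.

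The gap is in the crux step, exactly where you flag it. The iteration you describe does not close as stated: after a failed attempt, i.e.\ after the walk exits $B(x,2\eta)$ without meeting the killing layer, its distance to $\partial E_0$ is only bounded by $3\eta$, not by $\eta$, and nothing forces it to re-enter the $\eta$-collar; so the strong Markov property at $\sigma^{k_j}_\eta$ plus a single-scale cone estimate does not yield ``killed within time $C\eta^2\log(1/\delta)$ with probability $\ge 1-\delta$.'' What is actually needed is a uniform estimate of the form
\begin{equation*}
\sup_{x:\ \rho(x,\partial E_0)<\eta}\ \IP^x\left[\tau^{k_j}>\epsilon\right]\le h(\eta,\epsilon)\quad\text{with } h(\eta,\epsilon)\to 0 \text{ as }\eta\downarrow 0,\ \text{uniformly in large }j,
\end{equation*}
and this requires iterating the cone estimate over geometrically growing scales $\eta,3\eta,9\eta,\dots$ up to a fixed radius $r$ (equivalently, a discrete Beurling-type bound $\IP^x[\text{not killed before exiting }B(z,r)]\le C(\eta/r)^\alpha$ for $z\in\partial E_0$), combined with an exit-time bound $\IP^x[\text{exit time of }B(z,r)>\epsilon]\le Ce^{-c\epsilon/r^2}$, after which one optimizes over $r$. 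This is a standard but nontrivial piece of potential theory for reflected walks in Lipschitz domains, and it is the one genuinely new estimate your proof demands that appears nowhere in the paper; without it (or without falling back on the paper's martingale-problem identification of $\{\wt{X}_t,\ t<\tau_{E_0}\}$), the upper bound $\limsup_j\tau^{k_j}\le\tau_{E_0}$ is not established. If you supply that estimate, your argument is complete and arguably more transparent than the paper's.
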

\begin{proof}

On account of Lemma \ref{L:2.10},  it remains to show that any weakly convergent subsequence of $\{(\wh{X}^{k}, \IP^{\overline{m}_k}), k\ge 1\}$ converges to $(\wh{X},\IP^m)$ characterized by \eqref{killed-bmvd}. Let $\{k_j\}_{j\ge 1}$ be a subsequence  of $\mathbb{N}$ such that $\{\wh{X}^{k_j}, \IP^{\overline{m}_{k_j}}\}$ converges weakly. By Proposition \ref{P:3.8}, $\{k_j\}_{j\ge 1}$ further  admits a subsequence $\{k_{j_l}\}_{l\ge 1}$ such that $\wt{X}^{k_{j_l}}$ converges weakly. Given any finitely many compact measurable sets $A_1, \dots, A_n \subset E_0$, for sufficiently large $k\in \mathbb{N}$, it holds
 \begin{equation}\label{e:2.47}
 A_1, \dots, A_n\subset D^k_0.
 \end{equation}
Note that $\wt{X}^k$ and $\wh{X}^k$ have the same distribution on $D^k_0$. For any $j\ge 1$, denote $\wt{\tau}^j_{D^k_0}:=\inf\{t>0: \wt{X}^j\notin D^k_0\}$, $\wt{\tau}_{D^k_0}:=\inf\{t>0: \wt{X}\notin D^k_0\}$, and $\wh{\tau}^j_{D^k_0}:=\inf\{t>0: \wh{X}^j\notin D^k_0\}$. Since  $\wh{X}^k$ is the part process of $\wt{X}^k$ killed upon hitting $\partial E_0^k$, $\wt{\tau}^j_{D^k_0}= \wh{\tau}^j_{D^k_0}$ for all $k, j\in \mathbb{N}$.    We then have  for any $0\le t_1<\cdots <t_n\le T$,
 \begin{eqnarray}
&&\lim_{j\rightarrow \infty} \IP^{\overline{m}_{k_j}}\left[\left(\wh{X}^{k_j}_{t_1}, \dots, \wh{X}^{k_j}_{t_n}\right) \in A_1\times \cdots \times A_n\right] \nonumber
\\
&\stackrel{\eqref{e:2.47}}{=}& \lim_{l\rightarrow \infty} \IP^{\overline{m}_{k_{j_l}}}\left[\left(\wh{X}^{k_{j_l}}_{t_1}, \dots, \wh{X}^{k_{j_l}}_{t_n}\right) \in A_1\times \cdots \times A_n;\, t_n<\wh{\tau}^{k_{j_l}}_{D^k_0}\right] \nonumber
\\
 &=& \lim_{l\rightarrow \infty} \IP^{\overline{m}_{k_{j_l}}}\left[\left(\wt{X}^{k_{j_l}}_{t_1}, \dots, \wt{X}^{k_{j_l}}_{t_n}\right) \in A_1\times \cdots \times A_n;\; t_n<\wt{\tau}^{k_{j_l}}_{D^k_0}\right] \nonumber
 \\
 (\text{Proposition }\ref{P:3.8})&=& \IP^m\left[\left(\wt{X}_{t_1}, \dots, \wt{X}_{t_n}\right) \in A_1\times \cdots \times A_n;\; t_n<\wt{\tau}_{D^k_0}\right] \nonumber
 \\
&=& \IP^m\left[\left(\wh{X}_{t_1}, \dots, \wh{X}_{t_n}\right) \in A_1\times \cdots \times A_n\right] \nonumber.
 \end{eqnarray}
 This verifies that $\{\wh{X}^{k_j}, \IP^{\overline{m}_k}\}$ converges to $\wh{X}$ characterized by \eqref{killed-bmvd}. Since the laws of $\{(\wh{X}^k, \IP^{\overline{m}_k}),\\ k\ge 1\}$ are tight in $\mathbf{D}([0, T], E_0^\Delta,\rho)$, and $\{(\wh{X}^{k_j}, \IP^{\overline{m}_{k_j}}),j\ge 1\}$ is an arbitrary choice of weakly convergent subsequence of $\{\wh{X}^{k}, \IP^{\overline{m}_{k}}\}$, the proof is complete. 
\end{proof}

\medskip

{\bf Acknowledgement.} I thank Professor Zhen-Qing Chen for suggesting the topic and some very helpful comments.

 \small

\vskip 0.3truein

\noindent {\bf Shuwen Lou}

\smallskip \noindent
Department of Mathematics and Statistics, Loyola University Chicago,
\noindent
Chicago, IL 60660, USA

\noindent
E-mail:  \texttt{slou1@luc.edu}


\begin{thebibliography}{99}


\bibitem{BBC} K. Bogdan, K. Burdzy, and Z.-Q. Chen, Censored stable processes. {\it Probab. Theory Relat. Fields \bf 127}   (2003),  89–152.

\bibitem{AB} A. Bovier, {\it Markov Processes}. Available at \url{https://wt.iam.uni-bonn.de/fileadmin/WT/Inhalt/people/Anton_Bovier/markov-processes/200910_WS/wt3-bonn.pdf}.


\bibitem{BC}  K. Burdzy and Z.-Q. Chen, Discrete approxmations to reflected Brownian motion.
{\it Ann. Probab. \bf 36} (2008), 698-727.


\bibitem{CF} Z.-Q. Chen and M. Fukushima,   {\it Symmetric Markov Processes, Time Change and Boundary
Theory}.   Princeton University Press, 2011.

\bibitem{CFKZ} Z.-Q. Chen, P. J. Fitzsimmons, K. Kuwae and T.-S. Zhang, Stochastic calculus for symmetric Markov processes. {\it Ann. Probab. \bf 36} (2008), 931-970. 

\bibitem{CL} Z. -Q. Chen and S. Lou, Brownian motion on some spaces with varying dimension. {\it Ann. Probab. \bf 47} (2019), 213-269. 

\bibitem{EK} S. N. Ethier and T. G. Kurtz, {\it Markov Processes: Characterization and Convergence}. Wiley, New York, 1986. 

\bibitem{FOT} M. Fukushima, Y. Oshima, and M. Takeda, {\it Dirichlet Forms and Symmetric Markov Processes}, Second revised and extended edition.  de Gruyter Studies in Mathematics, vol. 19, Walter de Gruyter \& Co., Berlin, 2011.

\bibitem{INW} N. Ikeda, M. Nagasawa, and S. Watanabe, A construction of Markov processes by piecing out. {\it Proc. Japan Acad.   \bf 42(4)} (1966), 370-375.

\bibitem{JS} J. Jacob and A. N. Shiryaev, {\it Limit Theorems for Stochastic Processes}. Springer, Berlin, 1987. 

\bibitem{MF} F. Markus,  {\it Markov processes and martingale problems}. Available at \url{https://www.math.unipd.it/~fischer/Didattica/MarkovMP.pdf}.




\end{thebibliography}
 \end{document}